\newtheorem*{rep@theorem}{\rep@title}
\newcommand{\newreptheorem}[2]{%
\newenvironment{rep#1}[1]{%
 \def\rep@title{#2 \ref{##1}}%
 \begin{rep@theorem}}%
 {\end{rep@theorem}}}
\newtheorem{theorem}{Theorem}[section]%
\newtheorem{thmx}{Theorem}
\newtheorem{lemma}[theorem]{Lemma}
\newtheorem{corollary}[theorem]{Corollary}
\newtheorem{proposition}[theorem]{Proposition}
\newtheorem{notation}[theorem]{Notation}
\theoremstyle{definition}
\newtheorem{definition}[theorem]{Definition}
\newtheorem{example}[theorem]{Example}
\theoremstyle{remark}
\newtheorem{remark}[theorem]{Remark}
\newcommand{\R}{\mathbb{R}}
\newcommand{\Q}{\mathbb{Q}}
\newcommand{\N}{\mathbb{N}}
\newcommand{\C}{\mathbb{C}}
\newcommand{\Z}{\mathbb{Z}}
\newcommand{\HH}{{\mathbf{H}}}
\newcommand{\HG}{{\mathbf{G}}}
\newcommand{\one}{\mathbb{I}}
\newcommand{\counit}{\epsilon}
\newcommand{\unit}{u}
\newcommand{\bigO}{\mathcal{O}}
\newcommand{\smallO}{o}
\DeclareMathOperator{\id}{id}
\DeclareMathOperator{\Aut}{Aut}
\DeclareMathOperator{\Out}{Out}
\newcommand{\ch}{\chi} %
\newcommand{\Ch}{\widehat\chi} %
\newcommand{\Outn}{\Out(F_n)}
\newcommand\G{\Gamma}
\newcommand{\GG}{{\mathcal{G}}} %
\newcommand{\lG}{\ell\mathcal{G}}  %
\newcommand{\LG}{\mathcal{LG}} %
\newcommand{\iso}{\cong}
\DeclareMathOperator{\PAut}{PAut}
\newlength\figureheight
\newlength\figurewidth
\title{The Euler characteristic of $\Outn$}
\author{Michael Borinsky \and Karen Vogtmann}
\address{
Michael Borinsky\\
Nikhef\\
Science Park 105\\
1098 XG Amsterdam\\
Netherlands
}
\address{
Karen Vogtmann\\
University of Warwick\\
Mathematics Institute\\
Zeeman Building\\
Coventry CV4 7AL\\
United Kingdom
}
\address{
Nikhef preprint number: 2019-031
}
\begin{document}

\begin{abstract} We prove that the rational Euler characteristic of $\Out(F_n)$ is always negative and  its asymptotic growth rate is $\G(n-\frac{3}{2})/\sqrt{2\pi}\log^2n$.  This settles a 1987 conjecture of J.\ Smillie and the second author.  We establish  connections with the Lambert $W$-function and the zeta function.
\end{abstract}

\maketitle

\thispagestyle{empty}
\section{Introduction}
The Euler characteristic, defined as the alternating sum of the Betti numbers,  is a key invariant of topological spaces of finite type (such as cell complexes built out of a finite number of cells).  One can define  an invariant $\widetilde \chi(G)$  for a group $G$ by substituting group cohomology for  singular cohomology, but unless $G$ has a finite-type $K(G,1)$-space this invariant lacks many desirable features of topological Euler characteristics.  This is unfortunate because many of the most interesting groups have torsion, and groups with torsion never have finite-type $K(G,1)$-spaces.  A solution was proposed by C.T.C.\ Wall,  who observed that if $G$ has a torsion-free subgroup $H$ of finite index that {\it does} have a finite-type classifying space then the rational number $\chi(H)/[G:H]$ is  an invariant of $G$ \cite{Wall61}. In particular, this number does not depend on the choice of $H$. Wall called it the {\it rational Euler characteristic} of $G$,   denoted    $\chi(G)$. This rational Euler characteristic is better behaved than $\widetilde \chi(G)$; for example if $1\to A\to B\to C\to 1$  is a short exact sequence of groups  then $\chi(B)=\chi(A)\chi(C)$, assuming  $\chi(A), \chi(B) $ and $\chi(C)$ are all defined.  

It turns out that rational Euler characteristics of arithmetic groups can often be expressed in terms of zeta functions; this ultimately depends on the Gauss-Bonnet theorem  (see \cite{Serre71, Serre79}  for details and a guide to the literature).  Remarkably,  Harer and Zagier showed that the rational Euler characteristics of mapping class groups are also given by zeta functions, e.g.\ the rational Euler characteristic of the mapping class group of a once-punctured surface of genus $g$ is equal to $\zeta(1-2g)$ \cite{HZ86}. This was later reproved by Penner \cite{penner1986moduli} and by Kontsevich \cite{Kon92}, each using asymptotic methods related to perturbation expansions from quantum field theory. 

 We are concerned here with the rational Euler characteristic of the group $\Outn$ of outer automorphisms of a finitely-generated free group.  This group   shares many features with both mapping class groups and arithmetic groups, though it belongs to neither class.  In 1987 Smillie and Vogtmann found a generating function for $\chi(\Outn)$ and computed its value for $n\leq 11$ \cite{SV87a}.  From the results of these computations, they conjectured that $\chi(\Outn)$ is always  negative and the absolute value grows faster than exponentially.  In 1989 Zagier simplified the generating function and computed $\chi(\Outn)$ for $n\leq 100$;  this added strong evidence for these conjectures without providing a proof \cite{Zagier}.    The only general statements previously known about the value of $\chi(\Outn)$  are that it is non-zero for even values of $n$, and certain information was established about the primes dividing the denominator \cite{SV87a, SV87b}.

In this article we show that $\chi(\Outn)$ is negative for all $n$ and prove that its asymptotic growth rate is controlled by the classical gamma and log functions:

\begin{thmx}\label{thm:SVconj}%
    The rational Euler characteristic of $\Outn$ is strictly negative, $\chi\left( \Outn \right) < 0$, for all $n \geq 2$ and its magnitude grows more than exponentially,
    \begin{align*}
        \chi\left(\Outn \right) &\sim - \frac{1}{\sqrt{2 \pi}} \frac{\Gamma(n-\frac32)}{\log^2 n} \text{ as } n \to \infty.
    \end{align*}
\end{thmx}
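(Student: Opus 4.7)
The natural starting point is the combinatorial formula of Smillie--Vogtmann expressing $\chi(\Outn)$ as a weighted alternating sum over isomorphism classes of connected graphs of rank $n$ whose vertices all have valence at least three, with each graph $G$ contributing $(-1)^{|E(G)|}/|\Aut(G)|$, as later simplified by Zagier. My plan is to re-encode this sum as a concrete integral representation suitable for Laplace-type asymptotics. The exponential formula reduces the enumeration of such graphs to counting certain rooted tree-like building blocks, whose exponential generating function is essentially $-W(-x)$, where $W$ denotes the Lambert $W$-function (via the classical identity $W(x)e^{W(x)} = x$). This is how $W$ enters the picture, and its branch point at $x = -1/e$ will ultimately control the growth.

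From this perspective I would derive an integral representation $\chi(\Outn) = \int_0^\infty F_n(t)\, dt$ (or a closely related contour integral), whose integrand $F_n$ involves logarithmic expressions coming from counting multi-edges and cyclic structures, and to which the method of steepest descent can be applied. The factor $\Gamma(n - \tfrac{3}{2})/\sqrt{2\pi}$ in the asymptotic is the signature of a Laplace integral with one transverse Gaussian direction and a half-integer shift at a dominant saddle; the denominator $\log^2 n$ should emerge from a logarithmic factor in $F_n$ whose effective argument is frozen at a value of order $\log n$ at the saddle. This is a standard pattern in renormalized asymptotic enumeration of graph sums.

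For strict negativity I would combine effective error bounds in the saddle-point analysis, giving $\chi(\Outn) < 0$ for $n$ exceeding some explicit threshold $n_0$, with direct computation using the Smillie--Vogtmann--Zagier formula to handle the finitely many remaining $n \geq 2$. The connection to the zeta function advertised in the abstract would then appear through the subleading terms of the asymptotic expansion, or through integration of the generating function against suitable kernels.

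The main obstacle, I anticipate, will be rigorously extracting the logarithmic correction $\log^{-2} n$: standard saddle-point theorems yield clean $\Gamma(n+a)$ asymptotics with \emph{polynomial} prefactors, but a logarithmic prefactor of this form requires resolving the local behavior of the $W$-based integrand near its critical point to unusually high order, and propagating uniform-in-$n$ error estimates strong enough that the asymptotic, once combined with the explicit small-$n$ computation, actually suffices to conclude negativity for every $n \geq 2$ rather than only asymptotically. Controlling the sign of the error term uniformly is the crux.
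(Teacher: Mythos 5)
There is a genuine gap, and it sits exactly where you anticipate trouble. Your plan hinges on a convergent integral representation $\chi(\Outn)=\int_0^\infty F_n(t)\,dt$ to which steepest descent can be applied, but no such representation is produced or available from the Smillie--Vogtmann sum: the generating function $T(z)=\sum_{n\ge1}\chi(\Out(F_{n+1}))z^n$ is factorially divergent, so Cauchy-type coefficient extraction is unavailable, and the formal ``path integral'' one can write down (Section~\ref{sec:tqft}, or Kontsevich's integral in eq.~\eqref{eqn:integral_kontsevich}) either is ill-defined or encodes the numbers $\ch_n$ only as coefficients of an asymptotic expansion in $n^{-k}$ --- and the large-$n$ behaviour of such an integral says nothing direct about the large-$k$ behaviour of its expansion coefficients, which is what Theorem~\ref{thm:SVconj} requires. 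The actual mechanism is different: one computes the asymptotic expansion of the explicit sequence $\sqrt{2\pi}e^{-n}n^n$ in the scale $\{(-1)^k\Gamma(n+\frac12-k)\}_{k\ge0}$ in two independent ways --- via Laplace's method applied to Euler's integral for $\Gamma(n-m+\frac12)$ combined with the graph identities (Propositions~\ref{prop:Tzx_graph_counting_identity}, \ref{prop:Tzx_leaves_identity}, \ref{prop:graph_stirling}), showing the coefficients are $(-1)^k\Ch_k$ (Theorem~\ref{thm:asymptotic_expansion}); and via singularity analysis of $zW_0'(z)=\sum_{n\ge1}(-1)^{n+1}\frac{n^n}{n!}z^n$ at its branch point $z=-\frac1e$ --- and equates them by uniqueness of asymptotic expansions, giving $\Ch_k=-2\frac{\Gamma(k+\frac12)}{\sqrt{2\pi}}v_{2k-1}$ (Proposition~\ref{prop:lambert_rep}). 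Your suggestion that $W$ enters through ``rooted tree-like building blocks'' of the graph sum is not substantiated and is not how it arises; likewise the $\log^{-2}n$ does not come from a logarithm frozen at a saddle (standard Laplace/saddle analysis produces only power-of-$n$ prefactors, as you note) but from the logarithmic singularity of $W_{-1}$ at $0$ via the transfer law $[w^k]\bigl(\log\frac1{1-w}\bigr)^{-1}\sim-\frac{1}{k\log^2k}$ (Lemma~\ref{lmm:asymp_vk}), followed by Wright's theorem \cite{wright1970asymptotic} (Lemma~\ref{lmm:wright_connected_asymptotics}) to descend from the coefficients $\Ch_n$ of $\exp(T)$ to $\ch_n$ --- a connected-versus-disconnected step your sketch omits entirely.

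The negativity claim is also not reachable by your route as described. ``Asymptotics with effective error bounds for $n\ge n_0$ plus a finite check'' would require effective versions of the transfer theorems that you do not supply, and you correctly flag uniform sign control of the error as the crux; but the paper's negativity statement is exact, not asymptotic, and involves no threshold or computation: Volkmer's theorem \cite{volkmer2008factorial} gives $v_k>0$ for all $k\ge1$ (via an integral representation with strictly negative integrand), hence $\Ch_n<0$ for all $n\ge1$ by Proposition~\ref{prop:lambert_rep}, and the elementary recursion argument of Lemma~\ref{lmm:exp_negative} transfers this to $\ch_n=\chi(\Out(F_{n+1}))<0$ for all $n\ge1$. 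Without the identification of $\Ch_k$ with the Lambert branch-point coefficients --- the heart of the proof and absent from your plan --- neither the sign statement for every $n\ge2$ nor the precise $\Gamma(n-\frac32)/\log^2 n$ growth (Proposition~\ref{prop:chi_outfn_asymptotics}) follows.
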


The proof of Theorem~\ref{thm:SVconj} is based on the following theorem, in which we  produce  an asymptotic expansion, with respect to the asymptotic scale  $\{(-1)^k\G(n+\frac{1}{2}-k)\}_{k\geq0}$ in the limit $n\to \infty$, whose coefficients are closely related to $\chi(\Outn)$.

\begin{thmx}\label{thm:asymptotic_expansion}
\begin{align*}
    \sqrt{2 \pi}e^{-n} n^n &\sim \sum_{ k\geq 0 } \Ch_k (-1)^k  \Gamma( n + \frac12 - k ) \text{ as } n\rightarrow \infty,
\end{align*}
where $\Ch_k$ is the coefficient of $z^k$ in the formal power series $\exp\left( \sum_{n\geq 1} \chi\left( \Out (F_{n+1}) \right) z^n \right)$.
\end{thmx}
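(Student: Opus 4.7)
The plan is to reformulate Theorem~\ref{thm:asymptotic_expansion} as an identity between an explicit integral and the generating series $F(z) := \exp\bigl(\sum_{n\geq 1} \chi(\Out(F_{n+1})) z^n\bigr)$, and then to extract the claimed asymptotic expansion via a saddle-point/Laplace analysis. The first step is to give $F(z)$ a graph-theoretic interpretation. The spine of Outer space yields a formula of the shape $\chi(\Out(F_{n+1})) = \sum_\Gamma (-1)^{|E(\Gamma)|-1}/|\Aut(\Gamma)|$ over connected graphs of first Betti number $n+1$ with all vertex valences $\geq 3$, so by the exponential formula (Wick's theorem),
\begin{align*}
F(z) \;=\; \sum_\Gamma \frac{(-1)^{|E(\Gamma)|}}{|\Aut(\Gamma)|}\, z^{|E(\Gamma)|-|V(\Gamma)|},
\end{align*}
where $\Gamma$ now ranges over all (possibly disconnected) such graphs. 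I will then invoke the standard perturbative/Feynman calculus to write this sum as a formal Gaussian integral whose potential encodes the allowed vertex valences.

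Next I would use this integral representation to produce a concrete identity of the form
\begin{align*}
\sqrt{2\pi}\, e^{-n} n^n \;=\; \int_\gamma e^{-x}\, x^{n-1/2}\, \Psi(x)\, dx,
\end{align*}
where $\Psi$ is built from $F$ and admits the asymptotic expansion $\Psi(x) \sim \sum_{k\geq 0} \Ch_k (-x)^{-k}$ as $x \to \infty$. Term-by-term integration against $\int_0^\infty e^{-x} x^{n-1/2-k}\,dx = \Gamma(n+1/2-k)$ would then produce the right-hand side of the theorem, and the leading term $\sqrt{2\pi}\, e^{-n} n^n$ matches the Stirling asymptotics $\Gamma(n+1/2) \sim \sqrt{2\pi}\, n^n e^{-n}$, consistent with $\Ch_0 = 1$. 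To construct $\Psi$ explicitly I will change variables so that the integral localises at the saddle of $x - n\log x$ at $x=n$; the Lambert $W$-function advertised in the abstract parametrises the inverse of this saddle, and should convert the formal Feynman integral for $F$ into the required expansion around infinity.

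The main obstacle is passing from this formal identity of generating series to a genuine asymptotic expansion with remainder $\smallO(\Gamma(n+1/2-N))$ at each order $N$ — a Borel-resummation problem. This will require uniform bounds on $\Psi$ along a suitably deformed contour, together with an analysis of the singularities of the Borel transform of $F$, presumably governed by the branch point of the Lambert $W$-function; the sign $(-1)^k$ in the expansion should emerge from the location of the nearest Borel-plane singularity of $\Psi$. Once these analytic estimates are in place, the asymptotic expansion of Theorem~\ref{thm:asymptotic_expansion} follows by term-by-term integration and standard tail bookkeeping.
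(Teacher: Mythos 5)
Your starting formula is not the one the spine of Outer space gives, and the error is fatal to the rest of the plan. The cells of the relevant complex are indexed by pairs $(\Gamma,\varphi)$ of a graph and a forest, with dimension $e(\varphi)$, so the Euler characteristic is $\chi(\Out(F_{n+1}))=\sum_\Gamma \tau(\Gamma)/|\Aut(\Gamma)|$ with $\tau(\Gamma)=\sum_{\varphi\subset\Gamma\ \mathrm{forest}}(-1)^{e(\varphi)}$, not $\sum_\Gamma(-1)^{e(\Gamma)-1}/|\Aut(\Gamma)|$. The weight $(-1)^{e(\Gamma)}$ is the character $\sigma$ of Section~\ref{sec:core}, and the weighted sum you wrote down evaluates to $\pm\zeta(-n)/n=\mp B_{n+1}/(n(n+1))$ (Penner's model, Proposition~\ref{prop:bernoulli_graphs_sum}); exponentiating it produces the Stirling coefficients $\widehat b_k$, so your Gaussian-integral machinery would reproduce Stirling's expansion of $\Gamma(n)$ rather than Theorem~\ref{thm:asymptotic_expansion}. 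The whole point of the paper's combinatorial work is the interplay between $\tau$ and $\sigma$: the convolution identity $\sum_{\gamma\subset\Gamma}\tau(\gamma)(-1)^{e(\Gamma/\gamma)}=0$ (Proposition~\ref{prop:tau_identity}) is what yields the implicit equation $1=\sum_{\ell\geq0}(-z)^\ell(2\ell-1)!!\,[x^{2\ell}]\exp(T(z,x))$ of Proposition~\ref{prop:Tzx_graph_counting_identity}, and nothing of this structure is visible in your setup.

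Even granting the correct generating function, the analytic core of your plan is a genuine gap rather than a routine step. Since $T(z)$ has radius of convergence zero, there is no function $\Psi$ ``built from $F$'' with $\Psi(x)\sim\sum_k\Ch_k(-x)^{-k}$ until you have proved Borel summability of $\exp(T)$ and controlled its Borel-plane singularities and the contour bounds — exactly the items you defer to the end, and which are nowhere established (the paper's Section~\ref{sec:tqft} writes down your integral identity only as an explicitly non-rigorous heuristic, and Section~\ref{sec:kontsevich} stresses that agreement of asymptotic expansions does not determine the function). The paper's actual proof needs no such resummation: it applies Laplace's method only to the convergent Euler integrals for $\Gamma(n-m+\frac12)$ to expand $\psi_m(n)=\Gamma(n-m+\frac12)/(\sqrt{2\pi}e^{-n}n^n)$ in the scale $\{n^{-k}\}$ (Proposition~\ref{prop:graph_stirling}), changes asymptotic scales via Lemma~\ref{lmm:scale_change}, and then identifies the coefficients as $(-1)^m\Ch_m$ because they solve the same triangular linear system that the formal implicit equation forces, using only uniqueness of asymptotic-expansion coefficients. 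To salvage your route you would either have to supply the missing Borel-summation analysis (likely harder than the theorem) or convert your term-by-term integration into a purely formal triangular-system argument of the paper's kind.
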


We then relate this to a certain expansion of the Lambert $W$-function. The Lambert $W$-function is a well-studied function with a long history \cite{corless1996lambertw}. Eventually, we are able to use results of Volkmer  \cite{volkmer2008factorial} about the coefficients of this second expansion to prove Theorem~\ref{thm:SVconj}. 
In  Proposition~\ref{prop:efficient_chn} we also exploit the connection with the Lambert $W$-function to  give an efficient recursive algorithm for computing $\chi(\Out(F_n))$.

In Section~\ref{sec:core} we show that there is a close relationship between $\chi(\Outn)$ and the classical zeta function by considering the Connes-Kreimer Hopf algebra $\HH$ of 1-particle-irreducible graphs, i.e.\ graphs with no  separating edges.   Briefly, the formula in \cite{SV87a} for $\chi(\Outn)$ can be regarded as the integral of a certain character $\tau$ of $\HH$ on  the space spanned by admissible connected graphs with fundamental group $F_n$, with respect to the ``measure'' $\mu(\G)=1/|\Aut(\G)|$.  Proposition~\ref{prop:bernoulli_graphs_sum} shows that integrating $\tau^{-1}$ (the inverse  of $\tau$ in the group of characters) with respect to the same measure produces $\zeta(-n)/n=\frac{B_{n+1}}{n(n+1)},$ where $B_n$ is the $n$-th Bernoulli number.

The asymptotic expansion in  Theorem~\ref{thm:asymptotic_expansion} is strikingly  reminiscent of the well-known Stirling asymptotic expansion of the gamma function  in the asymptotic scale $\{\sqrt{2\pi} e^{-n} n^{n-\frac12-k} \}_{k\geq 0}$,
\begin{align*}
    \Gamma(n) &\sim 
    \sum_{k\geq 0} \widehat b_k \sqrt{2\pi} e^{-n} n^{n-\frac12-k} \text{ as } n \to \infty.
\end{align*}
The coefficients of this asymptotic expansion are related to the Bernoulli numbers as well: $\widehat b_k$ is the coefficient of $z^k$ in the formal power series $\exp\left( \sum_{n \geq 1} \frac{B_{n+1}}{n(n+1)} z^n \right)$. We will explore this analogy in more detail in Section~\ref{sec:asymptotic_expansions}.
Given this intriguing parallel between   the numbers $\chi(\Out(F_n))$ and  the Bernoulli numbers, which are very prominent objects in number theory, it would be interesting to look for a number theoretic meaning for the numbers $\chi(\Out(F_n))$ as well. The algorithm 
given in Proposition~\ref{prop:efficient_chn} may be helpful for investigations in this direction. 

As was pointed out in \cite{SV87a}, non-vanishing of $\chi(\Outn)$ implies that the kernel of the natural map from $\Outn$ to $GL(n,\Z)$ does not have finitely-generated homology.  Magnus proved in  1935 that this kernel is finitely generated and asked whether it is finitely presented, which would imply that the second homology is finitely generated \cite{Magnus}.   Bestvina, Bux and Margalit showed in 2007 that the homology in dimension $2n-4$ is not finitely generated \cite{BBM}. However Magnus' question is still unanswered for $n>3$.

Theorem~\ref{thm:SVconj} implies that for large $n,$   torsion-free subgroups of finite index in $\Out(F_n)$ have a huge amount of homology in odd dimensions.  We would like to say the same is true for the whole group $\Out(F_n)$.  One way to prove this is to compare the asymptotic growth rate of $\chi(\Out(F_n))$ with that of the ``naive'' Euler characteristic $\widetilde\chi(\Out(F_n))$. Brown  \cite{Brown82} showed that the difference between $\widetilde \chi$ and $\chi$ can be expressed in terms of rational Euler characteristics of centralizers of finite-order elements of $\Out(F_n)$.  Harer and Zagier used this to compare the rational and naive Euler characteristics for surface mapping class groups, using the fact that centralizers of finite-order elements are basically mapping class groups of surfaces of lower complexity.  Centralizers in $\Out(F_n)$ are more difficult to understand, but preliminary results obtained by combining the methods of this paper with results of Krsti\'{c} and Vogtmann \cite{KrVo93} indicate that the ratio $\widetilde\chi(\Out(F_n))/ \chi(\Out(F_n))$ tends to a positive constant.  Note that 
Galatius proved that  the stable rational homology of $\Outn$ vanishes  \cite{Galatius}, so this would show that there is a huge amount of {\em unstable} homology in odd dimensions.  This is completely mysterious, as only one odd-dimensional class has ever been found, by a very large computer calculation in rank $7$ \cite{Bartholdi}, and this calculation gives no insight into where all of these odd-dimensional classes might come from.

Finally we recall that, by the work of Kontsevich, the  cohomology of $\Outn$ with coefficients in a field of characteristic zero can be identified with the homology of the rank $n$ subcomplex of his Lie graph complex \cite{kontsevich1993formal}. 
Our results therefore apply to the Euler characteristic of this graph complex as well.  Kontsevich himself wrote down asymptotic formulas for the Euler characteristics of three of his graph complexes in \cite{kontsevich1993formal}; see Chapter 5 of  \cite{gerlits2002} for a detailed derivation of these formulas. The connection with graph complexes is explained a little further in Section~\ref{sec:kontsevich}. 
More discussion of the relation of the current paper with ideas from topological quantum field theory---with further relations to Kontsevich's work---can be found in Section~\ref{sec:tqft}.

\section*{Acknowledgements}
We thank Dirk Kreimer for support during this project. 
MB would like to thank Karen Yeats and Sam Yusim for discussions on the subject.
MB was supported by the NWO Vidi grant 680-47-551.
KV would like to thank Peter Smillie for discussions.  KV was partially supported by the Royal Society Wolfson Research Merit Award WM130098 and by a Humboldt Research Award.

\newcommand\vertex[1]{\fill #1 circle (.15)}
\newcommand{\A}{{\rm A}}
\section{Graphs and rational Euler characteristics}

In this section we give variations on the results of \cite{SV87a}.  The idea  is to use the action of $\Outn$ and closely related groups on contractible spaces of finite graphs
to deduce information about the homology of the groups, including the rational Euler characteristic.  

\subsection{Combinatorial graphs}

We begin with a combinatorial definition of a graph and related terms.

\begin{definition}  A  {\em  graph} $\G$  consists of  a finite set $H(\G)$ called {\em half-edges} together with
\begin{itemize}
\item a partition of $H(\G)$ into parts called {\em vertices} and
\item   an involution $\iota_\G\colon H(\G)\to H(\G)$.
\end{itemize}
The {\em valence} $|v|$  of a vertex $v$ is the number of half-edges in $v$. A {\em leaf} of $\G$ is a  fixed point of the involution $\iota_\G$ and an {\em edge}  is an   orbit of size $2$. 
A  {\em graph   isomorphism}  $\G\to\G'$ is a bijection  $H(\G)\to H(\G')$ that  preserves the vertex partitions and commutes with the involutions.  
\end{definition} 

\begin{notation} Let $\G$ be a graph.
\begin{itemize}
\item $\Aut(\G)$ is the group of isomorphisms   $\G\overset{\iso}{\rightarrow} \G$. 
\item   $L(\G)$ is the set of leaves of $\G$, and $s(\G)=|L(\G)|$.
\item $E(\G)$ is the set of  edges of $\G$ and  $e(\G)=|E(\G)|$.
\item $V(\G)$ is the set of  vertices of $\G$ and $v(\G)=|V(\G)|$.
\end{itemize}
\end{notation} 

The graph with one  vertex, $n$  edges, $s$ leaves and $2n+s$ half-edges is called a {\em thorned rose}, and will be denoted $R_{n,s}$.   If $n=0$ we will also call $R_{0,s}$ a {\em star graph}   (see Figure~\ref{fig:graphs}).

With the exception of Section~\ref{sec:core}, we will only consider {\em admissible} graphs, where a graph is admissible if all vertices have valence at least $3$.  

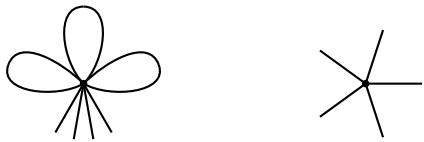
\begin{figure}
\begin{center}
 \begin{tikzpicture} [thick, scale=.25]   \fill (0,0) circle (.2);  \draw (0,0) to [out=135, in=180] (0,4.1); \draw (0,0) to [out=45, in=0] (0,4.1);  \draw (0,0) to [out=45, in=110] (4,1); \draw [thick] (0,0) to [out=-30, in=-70] (4,1);  \draw (0,0) to [out=135, in=70] (-4,1); \draw (0,0) to [out=210, in=-110] (-4,1);  \foreach \x in {-120, -100, -80, -60} { \draw (0,0) to (\x:3); }              \begin{scope} [xshift = 15cm]; \fill (0,0) circle (.2); \foreach \x in {0,72,144,216,288} { \draw (0,0) to (\x:3);  } \end{scope} \end{tikzpicture}
\end{center}
\caption{Thorned rose $R_{3,4}$ and star graph $R_{0,5}$}\label{fig:graphs}
\end{figure}
 
\begin{definition}  Let $\G$ be a graph.  A {\em subgraph} of $\G$ is a graph $\gamma$ with $H(\gamma)=H(\G)$, $V(\gamma)=V(\G)$, and  $E(\gamma)\subseteq E(\G)$.
\end{definition}
A graph $\G$ always has itself as a subgraph.  There is a unique ``trivial'' subgraph $\gamma_0$ with involution the identity, so $H(\gamma_0)=H(\G)$,$V(\gamma_0)=V(\G)$,  $E(\gamma_0)=\emptyset$ and $L(\gamma_0)=H(\G).$

\subsection{Topological graphs}  Every combinatorial graph $\G$ has a {\em topological realization} as a 1-dim\-en\-sional $CW$-complex.  For each element of $V(\G)$ we have a $0$-cell called a {\em vertex} and for each element of $E(\G)$ we have a $1$-cell called an {\em edge}.  For each element of  $L(\G)$ we have both a $0$-cell, called a {\em leaf vertex} and a $1$-cell connecting the leaf vertex to a  (non-leaf)  vertex.  
By our definition each connected component of the topological realization must have at least one vertex.  
Note that graphs may have multiple edges and  they may have loops at a vertex.  
The thorned rose $R_{n,s}$ defined in the last section has $(s+1)$ $0$-cells and $(n+s)$ 1-cells.   

The valence of a point $x$ is the minimal number of components of a deleted neighborhood of $x$.  
In an admissible graph the vertices are at least trivalent and the leaf vertices are univalent, so there are no bivalent $0$-cells.  

A graph isomorphism is a cellular homeomorphism, up to isotopy. Since admissible graphs have no bivalent  $0$-cells, any homeomorphism is a cellular homeomorphism. 

Notice that, by our definition, the topological realization of a subgraph of $\G$ is not a subcomplex of the topological realization of $\G$. Rather, it can be described as a graph obtained from $\G$ by cutting some of its edges, thus forming pairs of leaves. To make the result a $CW$-complex we have to add $0$-cells (leaf vertices) to the ends of the cut edges. A subgraph can also be visualized as the closure of a sufficiently small neighborhood of a subcomplex.  
 
 In  the remainder of this section we will work with the topological realization of a graph instead of using  the combinatorial definition, so that we may freely use topological concepts such as connectivity, fundamental group and homotopy equivalence.

For $s\geq 0$ let 
\begin{itemize}
\item $\GG$ denote the set of isomorphism classes of finite admissible graphs,
\item $\GG_s\subset \GG$ be the subset consisting of admissible graphs with exactly $s$ leaves, 
\item $\GG^c \subset \GG$ and $\GG_s^c \subset \GG_s$ be the respective subsets of connected graphs. 
\end{itemize} 
 
\subsection{Groups}
\label{sec:groups}

For any $n$ and $s$ we define   $\A_{n,s}$  to be the group of homotopy classes of homotopy equivalences of the thorned rose $R_{n,s}$ that fix the leaf vertices  $\{b_1,\ldots,b_s\},$ i.e.\ $\A_{n,s}=\pi_0(HE(\G, b_1,\ldots,b_s))$.     The groups $\A_{n,s}$ generalize $\Outn \iso \A_{n,0}$ and $\Aut(F_n) \iso \A_{n,1}$.  If $n=0$ then $R_{0,s}$ is a graph with no loops and at least $3$ leaves as we are insisting on at least one vertex, which is at least trivalent. So, $\A_{0,s}$ is only defined for $s\geq 3$, where it is the trivial group.    If $n=1$ then $R_{1,s}$ is a loop with $s\geq 1$ leaves, and there is a short exact sequence  $$1\to \Z^{s-1}\to \A_{1,s}\to \Z/2\Z\to 1.$$  
 If $n\geq 2$ and $s\geq 0$ there is a short exact sequence $$1\to F_{n}^{s}\to \A_{n,s}\to \Out(F_{n})\to 1.$$
 See \cite{CHKV16} for background on the groups $\A_{n,s}$.
These groups appear, for example, in the context of homology stability theorems \cite{Hat95}, the bordification of Outer space and virtual duality \cite{BF00, BSV18}, and assembly maps for homology \cite{CHKV16}.    
 
\subsection{Complexes of graphs and the rational Euler characteristic}
If a group $G$ is virtually torsion-free and acts properly and cocompactly on a contractible $CW$-complex $X$, then the rational Euler characteristic $\chi(G)$ can be calculated using this action, by the formula $$\chi(G)=\sum_{\sigma\in \mathcal C} \frac{(-1)^{\dim \sigma}}{|\text{Stab}(\sigma)|},$$
where $\mathcal C$ is a set of representatives for the cells of $X$ mod $G$ (see, e.g.\ \cite{Bro82}, Proposition~(7.3)).

 For any $s\geq 0$ the group $\A_{n,s}$ is virtually torsion-free and acts  properly and cocompactly on a contractible cube complex $K_{n,s}$.  To describe $K_{n,s}$ it is convenient to label the leaves of a graph, so that two graphs $\G$ and $\G'$ are isomorphic if there is a  graph isomorphism $\G\to\G'$ that preserves leaf-labels; an isomorphism class is then called a {\em leaf-labeled graph}. We use the notation $\lG$, $\lG_s$, $\lG^c$ and $\lG^c_s$ instead of $\GG$, $\GG_s$, $\GG^c$ and $\GG^c_s$ to denote the respective set of leaf-labeled graphs and $\PAut(\G)$ to denote the set of  automorphisms of a graph that fix the leaves.
 
 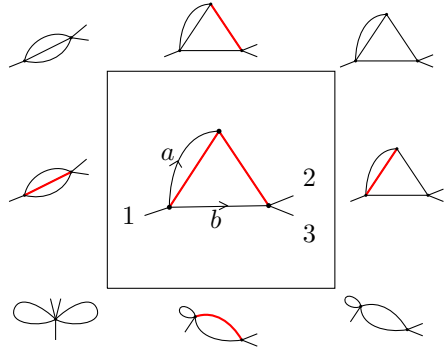
\begin{figure}
 \begin{center}
 \begin{tikzpicture} [scale=.55]   \draw (6.25,-1.75) to (6.25,3.5) to (11.75,3.5) to (11.75,-1.75) to (6.25,-1.75);  \begin{scope}[xshift=7.75cm, yshift=.25cm, scale=.4] \coordinate (a) at (0,-.1); \coordinate (b) at (3,4.5); \coordinate (c) at (6,0); \coordinate (one) at (-1.5,-.6); \coordinate (two) at (7.5,-.6); \coordinate (three) at (7.5,.6); \draw (a) to (b) to (c) to (a); \draw (a) to [out=90, in= 180 ] (b); \draw[thick, red] (a) to (b) to (c); \draw (a) to (one);  \node [left] (x) at (one) {$1$}; \draw (c) to (two);  \node [below right] (x) at (two) {$3$}; \draw (c) to (three);  \node [above right] (x) at (three) {$2$};  \node (x) at (-.1,3.1) {$a$}; \node (y) at (2.8,-.8) {$b$}; \draw (0,2.4) to (.5,2.7) to (.75,2.3); \draw (3,-.25) to (3.5,0) to (3,.25); \vertex{(a)};\vertex{(b)};\vertex{(c)}; \end{scope}  \begin{scope}[xshift=12.25cm, yshift=3.75cm,scale=.25] \coordinate (a) at (0,0); \coordinate (b) at (3,4.5); \coordinate (c) at (6,0); \draw (a) to (b) to (c) to (a); \draw (a) to [out=90, in= 180 ] (b); \draw (a) to (-1.5,-.6); \draw (c) to (7.5,-.6);\draw (c) to (7.5,.6); \vertex{(a)};\vertex{(b)};\vertex{(c)}; \end{scope}  \begin{scope}[xshift=12.5cm,yshift=.5cm, scale=.25] \coordinate (a) at (0,0); \coordinate (b) at (3,4.5); \coordinate (c) at (6,0); \draw (a) to (b) to (c) to (a); \draw (a) to [out=90, in= 180 ] (b); \draw[thick, red] (b) to (a); \vertex{(a)};\vertex{(b)};\vertex{(c)}; \draw (a) to (-1.5,-.6); \draw (c) to (7.5,-.6); \draw (c) to (7.5,.6); \end{scope}  \begin{scope}[xshift=8cm, yshift=4cm,scale=.25] \coordinate (a) at (0,0); \coordinate (b) at (3,4.5); \coordinate (c) at (6,0); \draw (a) to (b) to (c) to (a); \draw (a) to [out=90, in= 180 ] (b); \draw[thick, red] (b) to (c); \vertex{(a)};\vertex{(b)};\vertex{(c)}; \draw (a) to (-1.5,-.6); \draw (c) to (7.5,-.6); \draw (c) to (7.5,.6); \end{scope}  \begin{scope}[xshift=12cm, yshift=-2.75cm, scale=.25] \coordinate (c) at (6,0); \coordinate (n) at (1.5,2.25); \coordinate(r) at (0,3); \vertex{(c)};\vertex{(n)}; \draw (n) to[out=90, in=70] (r); \draw (n) to [out=210, in=-110] (r); \draw (c) to [out=190, in= -90] (n);\draw (c) to [out=120, in= 20 ](n); \draw (n) to (.5,.75);\draw (c) to (7.5,-.6); \draw (c) to (7.5,.6); \end{scope}  \begin{scope}[xshift=8cm, yshift=-3cm, scale=.25] \coordinate (c) at (6,0); \coordinate (n) at (1.5,2.25); \coordinate(r) at (0,3); \draw (n) to[out=90, in=70] (r); \draw (n) to [out=210, in=-110] (r); \draw (c) to [out=190, in= -90] (n); \draw [thick, red] (c) to [out=120, in= 20 ](n); \vertex{(c)};\vertex{(n)}; \draw (n) to (.5,.75); \draw (c) to (7.5,-.6); \draw (c) to (7.5,.6); \end{scope}  \begin{scope}[xshift=4.25cm, yshift=.5cm, scale=.25] \coordinate (a) at (0,0);\coordinate (m) at (4.5,2.25); \vertex{(a)};\vertex{(m)}; \draw (a) to (m); \draw[thick, red] (a) to (m); \draw (a) to [out=80, in= 150] (m); \draw (a) to [out=-15, in= -105 ](m); \draw (a) to (-1.5,-.6); \draw (m) to (6,3.5); \draw (m) to (6.2,2); \end{scope}  \begin{scope}[xshift=4.25cm, yshift=3.75cm, scale=.25] \coordinate (a) at (0,0); \coordinate (b) at (3,4.5); \coordinate (c) at (6,0); \coordinate (m) at (4.5,2.25); \coordinate (n) at (1.5,2.25); \vertex{(a)};\vertex{(m)}; \draw (a) to (m); \draw (a) to [out=80, in= 150] (m); \draw (a) to [out=-15, in= -105 ](m); \draw (a) to (-1.5,-.6); \draw (m) to (6,3.5); \draw (m) to (6.2,2); \end{scope}  \begin{scope}[xshift=5cm, yshift=-2.5cm, scale=.25] \vertex{(0,0)};  \draw (0,0) to [out=45, in=110] (4,1); \draw (0,0) to [out=-30, in=-70] (4,1);  \draw (0,0) to [out=135, in=70] (-4,1); \draw (0,0) to [out=210, in=-110] (-4,1);  \draw (0,0) to (0,-2); \draw (0,0) to (-.5,2); \draw (0,0) to (.5,2); \end{scope} \end{tikzpicture}
      \caption{A $2$-dimensional cube $(\G,\varphi)$ in $K_{2,3}$. Here $\varphi$ is the red subgraph, $\pi_1(\G)=F\langle a, b\rangle$ and the leaves are labeled $1,2,3$. The marking is indicated  by  arrows and labels on the edges in the complement of a maximal tree.}\label{fig:cube}
      \end{center}
      \end{figure}

The cube complex $K_{n,s}$ has one cube for each equivalence class of triples $(\G, \varphi, g)$, where 
\begin{itemize}
    \item $\G\in \lG^c_s$ is connected with $s$ labeled leaves and with $\pi_1(\G) \cong F_n$,
    \item $\varphi$ is a \textit{subforest} of $\G,$ i.e.\ a subgraph with no cycles,  
    \item $g\colon R_{n,s}\to \G$ is a leaf-label-preserving homotopy equivalence, called a \textit{marking} and
    \item $(\G,\varphi,g)\sim (\G',\varphi',g')$ if there is a  leaf-label-preserving graph isomorphism $h\colon\G\to\G'$ sending $\varphi$ to $\varphi'$ such that $h\circ g$ is homotopic to $g'$ through  leaf-label-preserving homotopies.
\end{itemize}
An example of a cube in $K_{2,3}$ is depicted in Figure~\ref{fig:cube}.
Contractibility of $K_{n,s}$ was proved for $s=0, n\geq 2$ by Culler and Vogtmann \cite{CV86} and in general by Hatcher \cite{Hat95} (see also \cite{HV96}).  (For $n\geq 2$, $K_{n,s}$ was originally described as a simplicial complex, but its simplices naturally group themselves into cubes, as was done, e.g.\  in \cite{HV98}.)   

 Smillie and  Vogtmann \cite{SV87a} considered only the case $s=0,$ but their arguments apply verbatim for graphs with leaves.  We define a function
 \[
\tau(\G)=\sum_{\varphi\subset \G} (-1)^{e(\varphi)},
\] 
where the sum is over all subforests $\varphi\subset \G,$ including the trivial subgraph, and $e(\varphi)$ is the number of edges in $\varphi$.  
For instance, 
$\tau(
{
\begin{tikzpicture}[x=1ex,y=1ex,baseline={([yshift=-.6ex]current bounding box.center)}] \coordinate (vm); \coordinate [left=.7 of vm] (v0); \coordinate [right=.7 of vm] (v1); \draw (v0) circle(.7); \draw (v1) circle(.7); \filldraw (vm) circle (1pt); \end{tikzpicture}%
}
) = 1$, 
as it  has only the trivial subforest and
$\tau(
{
\begin{tikzpicture}[x=1ex,y=1ex,baseline={([yshift=-.6ex]current bounding box.center)}] \coordinate (vm); \draw (vm) to (-.3,-1.25);\draw (vm) to (.3,-1.25); \coordinate [left=.7 of vm] (v0); \coordinate [right=.7 of vm] (v1); \draw (v0) circle(.7); \draw (v1) circle(.7); \filldraw (vm) circle (1pt); \end{tikzpicture}%
}
) = 1$ for the same reason (recall that a leaf is not an edge).
We have $\tau(
{
\begin{tikzpicture}[x=1ex,y=1ex,baseline={([yshift=-.6ex]current bounding box.center)}] \coordinate (v0); \coordinate [right=1.5 of v0] (v1); \coordinate [left=.7 of v0] (i0); \coordinate [right=.7 of v1] (o0); \draw (v0) -- (v1); \filldraw (v0) circle (1pt); \filldraw (v1) circle (1pt); \draw (i0) circle(.7); \draw (o0) circle(.7); \end{tikzpicture}%
}
) = 0$, as it has two forests whose respective contributions to the sum cancel (in fact $\tau$ always vanishes on graphs with a separating edge as ensured by Lemma~(2.3) of \cite{SV87a}) and 
$\tau(
{
\begin{tikzpicture}[x=1ex,y=1ex,baseline={([yshift=-.6ex]current bounding box.center)}] \coordinate (vm); \coordinate [left=1 of vm] (v0); \coordinate [right=1 of vm] (v1); \draw (v0) -- (v1); \draw (vm) circle(1); \filldraw (v0) circle (1pt); \filldraw (v1) circle (1pt); \end{tikzpicture}%
}
)=-2
$, as the graph $
{
\begin{tikzpicture}[x=1ex,y=1ex,baseline={([yshift=-.6ex]current bounding box.center)}] \coordinate (vm); \coordinate [left=1 of vm] (v0); \coordinate [right=1 of vm] (v1); \draw (v0) -- (v1); \draw (vm) circle(1); \filldraw (v0) circle (1pt); \filldraw (v1) circle (1pt); \end{tikzpicture}%
}$ has four forests with contributions $-1,-1,-1$ and $1$.

The following theorem relates the rational Euler characteristics  of the groups $\A_{n,s}$ to the function $\tau$.   
 \begin{theorem}\label{thm:SV}%
 \[
     \chi(\A_{n,s})=\sum \frac{\tau(\G)}{|\PAut(\G)|},
\]
where the sum is over all connected leaf-labeled graphs $\Gamma$ with $s$ leaves and fundamental group $F_n$. \end{theorem}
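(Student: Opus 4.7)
The plan is to apply the standard formula
\[
\chi(G) \;=\; \sum_{\sigma \in \mathcal{C}} \frac{(-1)^{\dim \sigma}}{|\mathrm{Stab}(\sigma)|}
\]
to the proper, cocompact action of the virtually torsion-free group $G = \A_{n,s}$ on the contractible cube complex $K_{n,s}$. This is the strategy used by Smillie--Vogtmann \cite{SV87a} in the case $s=0$, and the arguments should carry over verbatim once leaves are allowed.

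First I would unpack the cell structure of $K_{n,s}/\A_{n,s}$. The group $\A_{n,s}$ acts on the cubes $(\G,\varphi,g)$ by pre-composition on the marking $g$, so the action is free on markings and two cubes lie in the same orbit if and only if the underlying leaf-labeled pairs $(\G,\varphi)$ agree up to leaf-label-preserving graph isomorphism. Thus a set $\mathcal{C}$ of orbit representatives is indexed by equivalence classes $[(\G,\varphi)]$, where $\G$ runs over connected leaf-labeled graphs with $s$ leaves and $\pi_1(\G)\cong F_n$, and $\varphi$ runs over $\PAut(\G)$-orbits of subforests of $\G$. The cube has dimension $e(\varphi)$, and the $\A_{n,s}$-stabilizer of $(\G,\varphi,g)$ is identified with the subgroup $\mathrm{Stab}_{\PAut(\G)}(\varphi)$ of leaf-fixing graph automorphisms of $\G$ that preserve $\varphi$.

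Substituting these identifications into the Brown formula and grouping the sum by $\G$ yields
\[
\chi(\A_{n,s}) \;=\; \sum_{\G} \;\sum_{[\varphi]} \frac{(-1)^{e(\varphi)}}{|\mathrm{Stab}_{\PAut(\G)}(\varphi)|},
\]
where the inner sum is over $\PAut(\G)$-orbits of subforests. I would then apply orbit--stabilizer inside $\PAut(\G)$ to rewrite $|\mathrm{Stab}_{\PAut(\G)}(\varphi)|^{-1}$ as $|\mathrm{Orbit}(\varphi)|/|\PAut(\G)|$. Since $(-1)^{e(\varphi)}$ is constant on each $\PAut(\G)$-orbit, the inner sum collapses into an unrestricted sum over all subforests $\varphi \subseteq \G$, producing $\tau(\G)/|\PAut(\G)|$ by definition of $\tau$.

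The main technical point I expect to be careful about is the bookkeeping for the cell structure of $K_{n,s}$: namely verifying that the cubes are genuinely parametrized by triples $(\G,\varphi,g)$ with dimension equal to $e(\varphi)$, that the equivalence relation on triples coincides with the one induced by the $\A_{n,s}$-action, and that the stabilizer really is precisely the leaf-fixing automorphism subgroup preserving $\varphi$. All of this is implicit in the cubical description of $K_{n,s}$ used in \cite{HV98, CHKV16}, and once it is in place the remainder of the argument is the formal orbit--stabilizer manipulation above.
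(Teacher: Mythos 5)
Your proposal is correct and follows essentially the same route as the paper: apply Brown's formula to the cubical action of $\A_{n,s}$ on $K_{n,s}$, note that transitivity on markings gives one orbit of cubes per isomorphism class of pairs $(\G,\varphi)$ with stabilizer the leaf-fixing automorphisms preserving $\varphi$, and then collapse the sum over forest orbits via orbit--stabilizer in $\PAut(\G)$ to obtain $\tau(\G)/|\PAut(\G)|$. No substantive differences from the paper's argument.
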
 
 \begin{proof}  
     The group $\A_{n,s}$ acts properly and cocompactly on $K_{n,s}$.  It acts transitively on markings, assuming the graphs are leaf-labeled, so there is one orbit of cubes for each isomorphism class of pairs $(\G,\varphi)\in\lG^c_s$ with fundamental group $F_n$.  The dimension of this cube is $e(\varphi)$, and the stabilizer of  $(\G,\varphi, g)$ is isomorphic to the group $\PAut(\G,\varphi)$ of  automorphisms of $\G$ that fix the leaves and send $\varphi$ to itself.  Therefore we have
 $$\chi(\A_{n,s})=\sum_{\sigma\in \mathcal C} \frac{(-1)^{\dim \sigma}}{|\text{Stab}(\sigma)|}=\sum_{(\G,\varphi)} \frac{(-1)^{e(\varphi)}}{|\PAut(\G,\varphi)|},$$
 where the sum is over all isomorphism classes of pairs $(\G,\varphi)$ of leaf-labeled graphs $\G\in \lG_s^c$ and forests $\varphi\subset \G$.   The full automorphism group $\PAut(\G)$ acts on the set of forests in $\G$, and an orbit is an isomorphism class of pairs $(\G,\varphi)$, so  the orbit-stabilizer theorem now gives
  $$\sum_{(\G,\varphi)} \frac{(-1)^{e(\varphi)}}{|\PAut(\G,\varphi)|} = \sum_{\G\in\lG^c_s} \sum_{ \varphi\subset \G} \frac{(-1)^{e(\varphi)}}{|\PAut(\G)|}=\sum_{\G\in\lG^c_s} \frac{\tau(\G)}{|\PAut(\G)|}$$
  as desired.

 Note that for $n\geq 2$ and $s=0$ we have $\lG_0^c=\GG_0^c,$  $\A_{n,0} \iso \Outn$ and $\PAut(\G) \iso \Aut(\G)$, so  this is  Proposition~(1.12) of   \cite{SV87a}.   
 \end{proof}

\begin{example} 
Using this theorem we can immediately verify that
\begin{gather*}
\chi(\A_{2,0}) = \chi(\Out(F_2))= \\
\frac{\tau(
{
\begin{tikzpicture}[x=1ex,y=1ex,baseline={([yshift=-.6ex]current bounding box.center)}] \coordinate (vm); \coordinate [left=.7 of vm] (v0); \coordinate [right=.7 of vm] (v1); \draw (v0) circle(.7); \draw (v1) circle(.7); \filldraw (vm) circle (1pt); \end{tikzpicture}%
}
)}{|\PAut(
{
\begin{tikzpicture}[x=1ex,y=1ex,baseline={([yshift=-.6ex]current bounding box.center)}] \coordinate (vm); \coordinate [left=.7 of vm] (v0); \coordinate [right=.7 of vm] (v1); \draw (v0) circle(.7); \draw (v1) circle(.7); \filldraw (vm) circle (1pt); \end{tikzpicture}%
}
)|}
+
\frac{\tau(
{
\begin{tikzpicture}[x=1ex,y=1ex,baseline={([yshift=-.6ex]current bounding box.center)}] \coordinate (v0); \coordinate [right=1.5 of v0] (v1); \coordinate [left=.7 of v0] (i0); \coordinate [right=.7 of v1] (o0); \draw (v0) -- (v1); \filldraw (v0) circle (1pt); \filldraw (v1) circle (1pt); \draw (i0) circle(.7); \draw (o0) circle(.7); \end{tikzpicture}%
}
)}{|\PAut(
{
\begin{tikzpicture}[x=1ex,y=1ex,baseline={([yshift=-.6ex]current bounding box.center)}] \coordinate (v0); \coordinate [right=1.5 of v0] (v1); \coordinate [left=.7 of v0] (i0); \coordinate [right=.7 of v1] (o0); \draw (v0) -- (v1); \filldraw (v0) circle (1pt); \filldraw (v1) circle (1pt); \draw (i0) circle(.7); \draw (o0) circle(.7); \end{tikzpicture}%
}
)|}
+
\frac{\tau(
{
\begin{tikzpicture}[x=1ex,y=1ex,baseline={([yshift=-.6ex]current bounding box.center)}] \coordinate (vm); \coordinate [left=1 of vm] (v0); \coordinate [right=1 of vm] (v1); \draw (v0) -- (v1); \draw (vm) circle(1); \filldraw (v0) circle (1pt); \filldraw (v1) circle (1pt); \end{tikzpicture}%
}
)}{|\PAut(
{
\begin{tikzpicture}[x=1ex,y=1ex,baseline={([yshift=-.6ex]current bounding box.center)}] \coordinate (vm); \coordinate [left=1 of vm] (v0); \coordinate [right=1 of vm] (v1); \draw (v0) -- (v1); \draw (vm) circle(1); \filldraw (v0) circle (1pt); \filldraw (v1) circle (1pt); \end{tikzpicture}%
}
)|}
=
\frac{1}{8} + \frac{0}{8} + \frac{-2}{12} = 
-\frac{1}{24}.
\end{gather*}

\end{example}

 \begin{corollary}\label{cor:SV}
     $$ \frac{\chi(\A_{n,s})}{s!}= \sum_{\substack{\G\in\GG^c_{s}\\ \pi_1(\G) \cong F_n}} \frac{\tau(\G)}{|\Aut(\G)|}.$$
 \end{corollary}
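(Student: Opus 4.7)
The plan is to deduce the Corollary from Theorem~\ref{thm:SV} by carefully comparing the leaf-labeled sum (with $\PAut$) to the unlabeled sum (with full $\Aut$). The key observation is that the integrand $\tau(\G)$ depends only on the underlying combinatorial graph, not on any labeling of its leaves, while the denominators differ in a controllable way.

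First I would fix an unlabeled graph $\G \in \GG_s^c$ and consider the set $L(\G)$ of all bijective labelings of its $s$ leaves; this set has cardinality $s!$. The group $\Aut(\G)$ acts on $L(\G)$ by permuting leaves through graph automorphisms, and by definition the stabilizer of any labeling is exactly $\PAut(\G)$. Two labelings land in the same $\Aut(\G)$-orbit precisely when they give isomorphic leaf-labeled graphs, so the number of leaf-labeled graphs in $\lG_s^c$ whose underlying unlabeled graph is $\G$ equals $|L(\G)|/(|\Aut(\G)|/|\PAut(\G)|) = s! \, |\PAut(\G)|/|\Aut(\G)|$ by orbit-stabilizer.

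Now I would substitute this into Theorem~\ref{thm:SV}. Since for every leaf-labeling $\G'$ of $\G$ we have $\tau(\G') = \tau(\G)$ and $|\PAut(\G')| = |\PAut(\G)|$, grouping the sum over $\lG_s^c$ by underlying unlabeled graph gives
\begin{align*}
\chi(\A_{n,s})
&= \sum_{\substack{\G \in \GG_s^c \\ \pi_1(\G) \iso F_n}} \bigl(\text{number of leaf-labelings of }\G\bigr)\cdot \frac{\tau(\G)}{|\PAut(\G)|} \\
&= \sum_{\substack{\G \in \GG_s^c \\ \pi_1(\G) \iso F_n}} \frac{s!\, |\PAut(\G)|}{|\Aut(\G)|} \cdot \frac{\tau(\G)}{|\PAut(\G)|}
= s! \sum_{\substack{\G \in \GG_s^c \\ \pi_1(\G) \iso F_n}} \frac{\tau(\G)}{|\Aut(\G)|}.
\end{align*}
Dividing through by $s!$ yields the desired identity.

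There is no real obstacle here; the only mild subtlety is checking that the $\Aut(\G)$-action on labelings really has stabilizer equal to $\PAut(\G)$, which is immediate from the definition since $\PAut(\G)$ consists precisely of those $\Aut(\G)$-elements fixing every leaf pointwise. One should also note that the sum is finite for each $(n,s)$ by admissibility, so there are no convergence issues in rearranging the sum.
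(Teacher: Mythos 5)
Your proof is correct and follows essentially the same route as the paper: both apply orbit–stabilizer to the action of $\Aut(\G)$ on the $s!$ leaf-labelings, identify the stabilizer with $\PAut(\G)$ and the orbits with leaf-labeled graphs, and then regroup the sum from Theorem~\ref{thm:SV} by underlying unlabeled graph.
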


 \begin{proof} 
     $\Aut(\G)$ acts on the set  of leaf-labelings of $\G \in \lG_s^c$. The orbits are the leaf-labeled graphs,  and the stabilizer of a labeling is $\PAut(\G)$, giving $|\Aut(\G)|=|\text{Orbit}(\G)| |\PAut(\G)|. $ There are $s!$ labelings of $\G.$ Each orbit has the same size,  so the size of each orbit is  $s!/\ell(\G)$, where $\ell(\G)$ is the number of leaf-labeled graphs with underlying graph $\G$. Therefore,
  \begin{align*}  
      |\Aut(\G)|&=\frac{s!}{\ell(\G)}  |\PAut(\G)|. \qedhere
  \end{align*}
 \end{proof}

\section{Formal power series} 
For the rest of this article it will be convenient to  use $|\G|=e(\G)-v(\G)$ instead of the rank of $\pi_1(\G)$ to filter the set of graphs. For connected graphs $\G$ this is only a minor change of notation as $\text{rank}(\pi_1(\G)) = e(\G)-v(\G) +1 =|\G|+1$. %
 
Consistent with this shift, we define $\ch_n = \chi(\Out(F_{n+1}))$ and consider the formal power series
\begin{align*}
T(z)=\sum_{n=1}^\infty \chi(\Out(F_{n+1})) z^n=\sum_{n=1}^\infty \ch_n z^n.
\end{align*}
By Theorem~\ref{thm:SV} with $s=0$ we have 
\begin{align}
\label{eqn:def_Tz_cntd_graph_sum}
T(z)=\sum_{n=1}^\infty \left(\sum_{\substack{\G\in\GG^c_0\\ \pi_1(\G) \cong F_{n+1}}}\frac{\tau(\G)}{|\Aut(\G)|}\right) z^n = \sum_{\G\in\GG^c_0}\frac{\tau(\G)}{|\Aut(\G)|}z^{|\G|}.
\end{align}
For general $A_{n,s}$ we define a bivariate generating function for the Euler characteristic by
\begin{align}
\label{eqn:def_Tzx_Ans}
T(z,x)= \sum_{s\geq 3} \ch(\A_{0,s})z^{-1}\frac{x^s}{s!} + \sum_{s\geq 1} \ch(\A_{1,s}) \frac{x^s}{s!} + \sum_{n\geq 1}\sum_{s\geq 0} \ch(\A_{n+1,s})z^n\frac{x^s}{s!}.
\end{align}
Recall that the groups $\A_{n,s}$ are only defined for $s \geq 3$ if $n=0$, for $s \geq 1$ if $n=1$ and for $s\geq 0$ if $n \geq 2$. 
Obviously, $T(z,0) = T(z)$ and by Corollary~\ref{cor:SV}
\begin{align}
\label{eqn:def_Tzx_cntd_graph_sum}
T(z,x)=\sum_{\G\in\GG^c}\frac{\tau(\G)}{|\Aut(\G)|}z^{|\G|}x^{s(\G)},
\end{align}
where $s(\G)$ is the number of leaves in $\G$.  

The relationships between the groups $\A_{n,s}$, which were described in Section~\ref{sec:groups}, imply the following functional relation between $T(z)$ and the bivariate generating function $T(z,x)$.
 
\begin{proposition}  
    \label{prop:Tzx_leaves_identity}
    $$T(z,x) = \frac{e^x-\frac{x^2}{2}-x-1}{z} + \frac{x}{2} + T(ze^{-x}).$$
\end{proposition}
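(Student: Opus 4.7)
The plan is to read the identity directly off the definition \eqref{eqn:def_Tzx_Ans} of $T(z,x)$, evaluating each of its three constituent sums by means of the short exact sequences recorded in Section~\ref{sec:groups} and the multiplicativity of $\chi$ in short exact sequences. The essential observation is that when $n\geq 2$ the extension $1\to F_n^s\to \A_{n,s}\to \Out(F_n)\to 1$ combined with $\chi(F_m)=1-m$ yields the clean formula $\chi(\A_{n+1,s}) = (-n)^{s}\,\ch_n$, valid for every $n\geq 1$ and $s\geq 0$.

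Applied to the main double sum in \eqref{eqn:def_Tzx_Ans}, this formula together with $\sum_{s\geq 0} (-nx)^{s}/s! = e^{-nx}$ produces
\begin{align*}
\sum_{n\geq 1}\sum_{s\geq 0} \ch(\A_{n+1,s})\, z^n\, \frac{x^s}{s!} \;=\; \sum_{n\geq 1}\ch_n (ze^{-x})^n \;=\; T(ze^{-x}).
\end{align*}
For the sum over $\A_{1,s}$ I would invoke the extension $1\to \Z^{s-1}\to \A_{1,s}\to \Z/2\Z\to 1$: since $\chi(\Z^{s-1})=0$ for $s\geq 2$, only the $s=1$ term survives, giving $\chi(\A_{1,1}) = \chi(\Z/2\Z) = \tfrac12$ and hence a total contribution of $x/2$. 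Finally, $\A_{0,s}$ is trivial for $s\geq 3$, so the first sum evaluates to $z^{-1}\sum_{s\geq 3} x^s/s! = z^{-1}\bigl(e^x - 1 - x - \tfrac{x^2}{2}\bigr)$. Adding the three pieces yields the claim.

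There is no serious obstacle. The combinatorial content has already been absorbed into the partition of $T(z,x)$ in \eqref{eqn:def_Tzx_Ans}, which isolates the low-rank cases $n=0,1$ where the uniform extension $1\to F_n^s\to \A_{n,s}\to \Out(F_n)\to 1$ is unavailable; once this split is in place, the geometric $s$-resummation for $n\geq 1$ automatically produces the substitution $z\mapsto ze^{-x}$, and the exceptional low-rank terms collapse to the two explicit correction summands on the right-hand side. The only care required is applying the correct exact sequence in each rank regime.
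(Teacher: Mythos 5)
Your proposal is correct and is essentially the paper's own argument: both split $T(z,x)$ according to the definition in eq.~\eqref{eqn:def_Tzx_Ans}, use the triviality of $\A_{0,s}$, the extension $1\to \Z^{s-1}\to \A_{1,s}\to \Z/2\Z\to 1$, and the extension $1\to F_{n+1}^{s}\to \A_{n+1,s}\to \Out(F_{n+1})\to 1$ with multiplicativity of $\chi$ to get $\chi(\A_{n+1,s})=\ch_n(-n)^s$, and then resum the exponential series to obtain $T(ze^{-x})$. No gaps.
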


\begin{proof} 
The groups $\A_{0,s}$ are trivial, so we have $\ch(\A_{0,s})=1$ for all $s \geq 3$.  
 For the groups $\A_{1,s}$ we have the short exact sequence $$1\to \Z^{s-1}\to \A_{1,s}\to \Z/2\Z\to 1.$$  
Thus $\chi(\A_{1,s})=0$ if $s\geq 2$ and $\chi(\A_{1,1})=\chi(\Z/2\Z)=\frac{1}{2}$.
For the groups $\A_{n+1,s}$ with $n\geq 1$ the short exact sequence $$1\to F_{n+1}^{s}\to \A_{n+1,s}\to \Out(F_{n+1})\to 1,$$
gives $\chi(\A_{n+1,s})=\chi(\Out(F_{n+1}))(-n)^{s}=\ch_n (-n)^{s}$. 

Putting these together into eq.~\eqref{eqn:def_Tzx_Ans} gives 
\begin{align*}
    T(z,x)&=\sum_{s\geq 3} \frac{x^s}{s!}z^{-1} + \frac{x}{2} +\sum_{n\geq 1}\sum_{s\geq 0} \ch_n\frac{ (-n)^s}{s!} z^nx^s \\
    & =\sum_{s\geq 3} \frac{x^s}{s!}z^{-1} + \frac{x}{2} +\sum_{n\geq 1}\ch_n \sum_{s\geq 0} \frac{(-n)^s}{s!} x^s  z^n \\
    & =\sum_{s\geq 3} \frac{x^s}{s!}z^{-1} + \frac{x}{2} +\sum_{n\geq 1}\ch_n e^{-nx} z^n\\
 &=\sum_{s\geq 3} \frac{x^s}{s!}z^{-1} + \frac{x}{2} + T(ze^{-x}). \qedhere
\end{align*}
\end{proof}

\section{Algebraic graph combinatorics}
\label{sec:graphical_enumeration}

Although Theorem~\ref{thm:SV} gives an explicit expression for the coefficients of $T(z)$ and $T(z,x)$, we will use an implicit equation, which the generating function $T(z,x)$ must satisfy, to prove Theorem \ref{thm:asymptotic_expansion}. This implicit equation together with the identity from Proposition~\ref{prop:Tzx_leaves_identity} determines the coefficients $\chi(\Out(F_n))$ completely.

To formulate this implicit equation, it is convenient to use the \textit{coefficient extraction operator} notation: For an arbitrary formal power series $f(x)$ the notation $[x^n] f(x)$ means `the $n$-th coefficient in $x$ of $f(x)$.'

\begin{proposition}
\label{prop:Tzx_graph_counting_identity}
\begin{align}
\label{eqn:Tzx_implicit_equation}
    1 &= \sum_{\ell \geq 0} (-z)^\ell (2\ell-1)!! [x^{2\ell}] \exp\left( T(z,x) \right),
\end{align}
where $(2\ell-1)!!=(2\ell)! / ( \ell! 2^\ell )$ is  the  double factorial.
\end{proposition}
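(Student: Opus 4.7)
The plan is to expand the right-hand side of eq.~\eqref{eqn:Tzx_implicit_equation} as a sum over admissible graphs, use a Wick-style bijection between pairings of leaves and distinguished subsets of edges, and then collapse the resulting expression via a sign-reversing inclusion-exclusion over subforests.

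First I would apply the exponential formula to eq.~\eqref{eqn:def_Tzx_cntd_graph_sum}, extending $\tau$ multiplicatively to disconnected graphs (which is automatic, since a subforest of $\G_1\sqcup\G_2$ is a pair of subforests), so that
\begin{align*}
    \exp(T(z,x)) = \sum_{\G\in\GG}\frac{\tau(\G)}{|\Aut(\G)|}z^{|\G|}x^{s(\G)},
\end{align*}
where $\GG$ now includes the empty graph with $\tau(\emptyset)=1$. Then $[x^{2\ell}]$ restricts the sum to graphs with exactly $2\ell$ leaves, and the factor $(2\ell-1)!!$ is absorbed via the orbit-stabilizer identity for the $\Aut(\G)$-action on the $(2\ell-1)!!$ pairings $\pi$ of those leaves: $\frac{(2\ell-1)!!}{|\Aut(\G)|} = \sum_{[\pi]}\frac{1}{|\Aut(\G,\pi)|}$, with $[\pi]$ ranging over $\Aut(\G)$-orbits and $\Aut(\G,\pi)$ denoting the stabilizer.

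Next I would use the bijection $(\G,[\pi])\leftrightarrow(\G^*,E_s)$ in which $\G^*$ is obtained from $\G$ by splicing each matched pair of leaves into a single edge and $E_s\subseteq E(\G^*)$ is the set of these new edges. Splicing preserves each vertex's valence, so $\G^*\in\GG_0$, and $|\G^*|=|\G|+\ell$ converts $(-z)^\ell z^{|\G|}$ into $(-1)^{|E_s|}z^{|\G^*|}$. A second orbit-stabilizer step for subsets $E_s\subseteq E(\G^*)$ under $\Aut(\G^*)$ then yields
\begin{align*}
    \sum_{\ell\geq 0}(-z)^\ell(2\ell-1)!!\,[x^{2\ell}]\exp(T(z,x)) = \sum_{\G^*\in\GG_0}\frac{z^{|\G^*|}}{|\Aut(\G^*)|}\sum_{E_s\subseteq E(\G^*)}(-1)^{|E_s|}\tau(\G^*\setminus E_s),
\end{align*}
where $\G^*\setminus E_s$ denotes $\G^*$ with each edge in $E_s$ cut into two leaves.

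The inner sum collapses by expanding $\tau(\G^*\setminus E_s)=\sum_\varphi(-1)^{|\varphi|}$ over subforests $\varphi\subseteq E(\G^*)\setminus E_s$ and swapping the order of summation:
\begin{align*}
    \sum_{E_s\subseteq E(\G^*)}(-1)^{|E_s|}\tau(\G^*\setminus E_s) = \sum_{\varphi\text{ forest in }\G^*}(-1)^{|\varphi|}\sum_{E_s\subseteq E(\G^*)\setminus\varphi}(-1)^{|E_s|},
\end{align*}
and the inner geometric sum vanishes unless $E(\G^*)\setminus\varphi=\emptyset$, forcing $\varphi=E(\G^*)$ and hence $E(\G^*)$ itself to be a forest. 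But admissibility together with the absence of leaves in $\G^*$ gives $2e(\G^*)\geq 3v(\G^*)$, while a forest on $v(\G^*)\geq 1$ vertices satisfies $e(\G^*)\leq v(\G^*)-1$; these are incompatible, so only $\G^*=\emptyset$ survives, contributing $1$. The main obstacle I anticipate is the two-layer orbit-stabilizer bookkeeping (first for leaf pairings, then for edge subsets), which must be arranged so that every automorphism group lands in the correct denominator before the final sign-reversing inclusion-exclusion appears in its cleanest form.
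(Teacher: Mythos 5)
Your proposal is correct and follows essentially the same route as the paper: the exponential formula, then the cut/glue correspondence that converts the factor $(2\ell-1)!!$ into a sum over leafless graphs with a distinguished edge subset (the content of Proposition~\ref{prop:convoluted_graph_sum}, proved there with totally labeled graphs rather than your orbit-stabilizer bookkeeping on leaf pairings), and finally the sign-reversing sum over subforests (Proposition~\ref{prop:tau_identity} and Corollary~\ref{crll:tau_identity_summed}). The only cosmetic difference is that you kill the surviving term via the valence count $2e(\G^*)\ge 3v(\G^*)$ against $e\le v-1$, where the paper simply observes that every nonempty admissible leafless graph contains a cycle.
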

In the remainder of this section we will first give a combinatorial reformulation of this identity and then prove it.

\subsection{The exponential formula}
The exponential of the generating function $T(z,x)$ in \eqref{eqn:Tzx_implicit_equation} has a straightforward combinatorial interpretation. While $T(z,x)$ can be expressed as a sum over connected graphs as we did in \eqref{eqn:def_Tzx_cntd_graph_sum}, the generating function $\exp(T(z,x))$ can be expressed as a sum over all graphs.
The reason for this is that the function $\tau$ is multiplicative on disjoint unions of graphs, so we can apply the \textit{exponential formula} given, for example, in  \cite[5.1]{stanley1997enumerative2}.

Briefly, the argument behind the exponential formula is that if $\phi$ is a function on graphs that is multiplicative on disjoint unions, i.e.\ $\phi(\G_1 \sqcup \G_2) = \phi(\G_1) \phi(\G_2)$, then 
\begin{align*}
\sum_{\substack{\G \in \GG \\ |C(\G)| = n}} \frac{\phi(\G)}{|\Aut\G|} = \frac{1}{n!} \sum_{\gamma_1, \ldots, \gamma_n \in \GG^c} \prod_{i=1}^n \frac{\phi(\gamma_i)}{|\Aut \gamma_i|},
\end{align*}
where we sum over all graphs with $n$ connected components on the left hand side and over all $n$ tuples of connected graphs on the right hand side. The factor $1/n!$ accounts for the number of ways to order the connected components of the graphs. If we sum this equation over all $n \geq 0$ and use $e^x = \sum_{n \geq 0} x^n/n!$, we get
\begin{lemma}[Exponential formula]
\label{lmm:exponential_formula}
Let $\phi$ be a function from graphs to a power series ring that is multiplicative on disjoint unions (i.e.\ $\phi(\G_1 \sqcup \G_2) = \phi(\G_1) \phi(\G_2)$).  If the coefficient in $\phi(\G)$ of a given monomial  is non-zero for only finitely many graphs $\G$, then  
\begin{align*}
\sum_{\G \in \GG} \frac{\phi(\G)}{|\Aut\G|}
= \exp\left( 
\sum_{\G \in \GG^c} \frac{\phi(\G)}{|\Aut\G|}
\right).
\end{align*}
\end{lemma}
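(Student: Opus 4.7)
The plan is to prove the identity by grouping graphs in $\GG$ according to their number of connected components, and then checking that the automorphism count works out exactly so that the $n$-fold sum over connected graphs, divided by $n!$, reproduces the sum over $n$-component graphs. The finiteness hypothesis on $\phi$ ensures all power-series manipulations make sense coefficient-by-coefficient.

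First I would record the key structural fact about automorphisms of disjoint unions: if a graph $\G \in \GG$ decomposes as $\G \cong \gamma_1^{\sqcup m_1} \sqcup \cdots \sqcup \gamma_k^{\sqcup m_k}$, where the $\gamma_i \in \GG^c$ are pairwise non-isomorphic connected components appearing with multiplicities $m_i$, then
\begin{equation*}
|\Aut \G| = \prod_{i=1}^k |\Aut \gamma_i|^{m_i}\, m_i!,
\end{equation*}
since any automorphism of $\G$ must preserve isomorphism type of components and hence factors as a permutation of the $m_i$ copies of each $\gamma_i$ together with an independent automorphism of each copy. Combined with multiplicativity $\phi(\G) = \prod_i \phi(\gamma_i)^{m_i}$, this yields
\begin{equation*}
\sum_{\substack{\G \in \GG \\ |C(\G)| = n}} \frac{\phi(\G)}{|\Aut \G|} \;=\; \sum_{\substack{m_1, \ldots, m_k \geq 1 \\ m_1 + \cdots + m_k = n}} \; \sum_{\gamma_1, \ldots, \gamma_k \in \GG^c \text{ distinct}} \;\prod_{i=1}^k \frac{\phi(\gamma_i)^{m_i}}{|\Aut \gamma_i|^{m_i}\, m_i!}.
\end{equation*}

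Next I would compare this with the $n$-fold symmetric expansion. The sum $\frac{1}{n!}\sum_{\gamma_1,\ldots,\gamma_n \in \GG^c} \prod_{i=1}^n \frac{\phi(\gamma_i)}{|\Aut \gamma_i|}$ groups ordered tuples by the multiset they represent; a multiset with multiplicities $m_1, \ldots, m_k$ corresponds to exactly $\binom{n}{m_1, \ldots, m_k} = n!/\prod_i m_i!$ ordered tuples. Dividing by $n!$ produces precisely the factor $1/\prod_i m_i!$ appearing above, so
\begin{equation*}
\sum_{\substack{\G \in \GG \\ |C(\G)| = n}} \frac{\phi(\G)}{|\Aut \G|} \;=\; \frac{1}{n!}\left( \sum_{\gamma \in \GG^c} \frac{\phi(\gamma)}{|\Aut \gamma|} \right)^n.
\end{equation*}

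Finally I would sum over $n \geq 0$, adopting the convention that the empty graph ($n=0$) contributes $1$, and invoke $\sum_{n \geq 0} y^n/n! = \exp(y)$ at the level of formal power series to conclude. The only real subtlety is the component-automorphism counting in the first step, which must correctly account for the permutations of isomorphic components; once that is in place, the rest is the standard exponential-formula bookkeeping, and the finiteness hypothesis on $\phi$ guarantees that interchanging the order of summation and raising the inner series to arbitrary powers are legitimate operations on each monomial.
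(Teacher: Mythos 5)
Your proof is correct and follows essentially the same route as the paper, which sketches exactly this argument (grouping graphs by the number of connected components, matching the $n$-component sum with $\tfrac{1}{n!}$ times the $n$-fold sum over connected graphs, and summing over $n$, with a reference to the exponential formula in Stanley). Your explicit wreath-product count $|\Aut\G|=\prod_i|\Aut\gamma_i|^{m_i}m_i!$ simply fills in the bookkeeping the paper leaves implicit.
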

The finiteness condition on the function $\phi$ is necessary to ensure that $\sum_{\G \in \GG} \frac{\phi(\G)}{|\Aut\G|}$ exists in the respective power series space.
 
\begin{corollary}
\label{crll:disconnected_exp}
\begin{align*}
\exp({T(z)}) &= \sum_{\G\in\GG_0}\frac{\tau(\G)}{|\Aut(\G)|}z^{|\G|}\\
\exp({T(z,x)}) &= \sum_{\G\in\GG}\frac{\tau(\G)}{|\Aut(\G)|}z^{|\G|}x^{s(\G)}.
\end{align*}
\end{corollary}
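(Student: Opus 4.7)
The plan is to apply the exponential formula (Lemma~\ref{lmm:exponential_formula}) with the choice $\phi(\G) = \tau(\G)\, z^{|\G|} x^{s(\G)}$ for the second identity, and $\phi(\G) = \tau(\G)\, z^{|\G|}$ (restricted to leafless graphs) for the first. The two things to verify are therefore (i) multiplicativity of $\phi$ on disjoint unions and (ii) the finiteness hypothesis of Lemma~\ref{lmm:exponential_formula}.

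For multiplicativity, the key point is that $\tau$ is multiplicative on disjoint unions. Indeed, any subforest $\varphi$ of $\G_1 \sqcup \G_2$ is canonically a disjoint union $\varphi_1 \sqcup \varphi_2$ with $\varphi_i$ a subforest of $\G_i$, and conversely. Since $e(\varphi_1 \sqcup \varphi_2) = e(\varphi_1) + e(\varphi_2)$, summing over such pairs factors and gives $\tau(\G_1 \sqcup \G_2) = \tau(\G_1)\,\tau(\G_2)$. Combined with the obvious additivity $|\G_1 \sqcup \G_2| = |\G_1| + |\G_2|$ and $s(\G_1 \sqcup \G_2) = s(\G_1) + s(\G_2)$, the chosen $\phi$ is multiplicative.

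For the finiteness condition, fix a monomial $z^n x^s$ and consider admissible graphs $\G \in \GG$ with $|\G| = n$ and $s(\G) = s$. Admissibility ($|v| \geq 3$ for every non-leaf vertex) yields $2e(\G) + s \geq 3 v(\G)$, which together with $e(\G) - v(\G) = n$ gives $v(\G) \leq 2n + s$ and $e(\G) \leq 3n + s$. Hence only finitely many isomorphism classes of admissible graphs contribute to each monomial, so both sides of the exponential formula make sense in $\Q[[z,x]]$.

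Applying Lemma~\ref{lmm:exponential_formula} with $\phi(\G) = \tau(\G) z^{|\G|} x^{s(\G)}$ and using the expression \eqref{eqn:def_Tzx_cntd_graph_sum} for $T(z,x)$ as a sum over connected admissible graphs immediately yields the second identity. The first identity then follows either by setting $x = 0$ (noting $0^{s(\G)}$ vanishes unless $s(\G) = 0$, in which case the admissible graph lies in $\GG_0$), or equivalently by applying Lemma~\ref{lmm:exponential_formula} directly within the subclass $\GG_0$, which is closed under disjoint union. There is no real obstacle here; the only mild subtlety is confirming the finiteness bound from admissibility, but this is an elementary counting argument.
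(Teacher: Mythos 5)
Your proposal is correct and follows essentially the same route as the paper: apply Lemma~\ref{lmm:exponential_formula} to $\phi(\G)=\tau(\G)z^{|\G|}x^{s(\G)}$ (and its restriction to leafless graphs, which the paper implements by extending by zero) together with eq.~\eqref{eqn:def_Tzx_cntd_graph_sum}. The extra details you supply---the direct check that $\tau$ is multiplicative, which the paper cites from \cite{SV87a}, and the admissibility count for the finiteness hypothesis, which the paper merely asserts---are correct but do not change the argument.
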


\begin{proof}
Let $\phi_1$ be the function defined by $\phi_1(\G) = \tau(\G) z^{|\G|}$ for $\G \in \GG_0$ and $\phi_1(\G)=0$ for $\G \in \GG_s$ with $s \geq 1$. This function is multiplicative on disjoint unions of graphs, because $\tau$ is (Lemma~(2.2) of \cite{SV87a}) and both $|\G|$ and $s(\G)$ are additive graph invariants. The first statement follows by applying Lemma~\ref{lmm:exponential_formula}  to $\phi_1$ and using  eq.\ \eqref{eqn:def_Tz_cntd_graph_sum}. For the second statement apply  Lemma~\ref{lmm:exponential_formula} to the function $\phi_2(\G) = \tau(\G) z^{|\G|} x^{s(\G)}$ for all $\G \in \GG$, note that there are only a finite number of admissible graphs with fixed $|\G|$ and $s(\G)$ and apply eq.\ \eqref{eqn:def_Tzx_cntd_graph_sum}. 
\end{proof}
Note that the power series $T(z)$ and $\exp(T(z))$ carry the same information. 
Recall that $\ch_n$ is the coefficient of $z^n$ in $T(z)$, and denote the  coefficient of $z^n$ in $\exp(T(z))$ by $\Ch_n$.  
The coefficients $\ch_n$ and $\Ch_n$ are related by
\begin{align}
\label{eqn:exp_pwrsrs_relation}
     \ch_n&=\Ch_n-\frac{1}{n}\sum_{k=1}^{n-1}k \ch_k \Ch_{n-k} \text{ for } n \geq 1.
\end{align}
This recursive relation can be derived by taking the formal derivative of $\exp(T(z))$ which results in the (formal) differential equation $\frac{d}{dz}\exp({T(z)}) = e^{T(z)} \frac{d}{dz}T(z)$. Note that it is also important that $\ch_0 = 0$ for $\exp(T(z))$ to make sense as a power series with $\Q$ as coefficient ring. 

We can immediately use the relationship between the coefficients $\Ch_n$ and $\ch_n$ to prove the following statement which will be helpful later while proving that the rational Euler characteristic of $\Out(F_n)$ is always negative.
\begin{lemma}
\label{lmm:exp_negative}
If $\Ch_n < 0$ for all $n \geq 1$, then $\ch_n < 0$ for all $n \geq 1$.
\end{lemma}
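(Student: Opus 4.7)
The plan is to proceed by strong induction on $n$, using the recursion \eqref{eqn:exp_pwrsrs_relation} directly. Solving for $\ch_n$ gives
\begin{align*}
\ch_n = \Ch_n - \frac{1}{n}\sum_{k=1}^{n-1} k\, \ch_k\, \Ch_{n-k},
\end{align*}
so the sign of $\ch_n$ is controlled by the signs of $\Ch_n$ together with the signs of $\ch_k \Ch_{n-k}$ for $1 \leq k \leq n-1$.

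For the base case $n=1$ the sum is empty, so $\ch_1 = \Ch_1 < 0$ by hypothesis. For the inductive step, I would assume $\ch_k < 0$ for all $1 \leq k < n$; then for each $k$ in the range of summation, $\ch_k < 0$ by the inductive hypothesis and $\Ch_{n-k} < 0$ by the hypothesis of the lemma, hence $k\, \ch_k\, \Ch_{n-k} > 0$. Therefore $-\frac{1}{n}\sum_{k=1}^{n-1} k\, \ch_k\, \Ch_{n-k} < 0$. Since $\Ch_n < 0$ as well, $\ch_n$ is a sum of two strictly negative quantities and the induction closes.

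This argument is essentially routine once the recursion \eqref{eqn:exp_pwrsrs_relation} is in hand, and I do not foresee any genuine obstacle; the only subtlety is ensuring the recursion has the correct sign structure (each $\ch_k$ and each $\Ch_{n-k}$ in the sum appears exactly once, with a coefficient $k/n > 0$ and an overall minus sign), which is exactly what the displayed formula provides.
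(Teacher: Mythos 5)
Your proposal is correct and is exactly the paper's argument: the paper's proof simply states that the lemma follows by induction on $n$ applied to eq.~\eqref{eqn:exp_pwrsrs_relation}, and your write-up supplies precisely those details (empty sum at $n=1$, and in the inductive step each term $k\,\ch_k\,\Ch_{n-k}$ is positive, so $-\frac{1}{n}\sum_{k=1}^{n-1}k\,\ch_k\,\Ch_{n-k}<0$ and adding $\Ch_n<0$ closes the induction). No gaps.
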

\begin{proof}
This follows by induction on $n$ on eq.\ \eqref{eqn:exp_pwrsrs_relation}. 
\end{proof}
Because $\ch_n = \chi(\Out(F_{n+1}))$, proving $\Ch_n < 0$ for all $n\geq 1$ is therefore sufficient to show that $\chi(\Out(F_n)) < 0$ for all $n\geq 2$.

\subsection{Convolution identities}

By Corollary~\ref{crll:disconnected_exp}, the statement of Proposition~\ref{prop:Tzx_graph_counting_identity} is equivalent to the identity
\begin{align}
\label{eqn:graph_sum_tau_identity}
1=\sum_{\ell\geq 0} (-z)^\ell(2\ell-1)!! \sum_{\G\in\GG_{2\ell}} \frac{\tau(\G)}{|\Aut(\G)|} z^{|\G|}.
\end{align}

If $\gamma$ is a subgraph of $\G$, we denote by  $\G/\gamma$  the graph that one obtains from $\G$ by collapsing each edge that is in $\gamma$ to a point.
We will use the following \textit{convolution identity} for $\tau$ to prove eq.\ \eqref{eqn:graph_sum_tau_identity} and therefore also Proposition~\ref{prop:Tzx_graph_counting_identity}. 
\begin{proposition}
\label{prop:tau_identity}
If $\G$ is a graph with at least one cycle, then 
\begin{align*}
   \sum_{\gamma \subset \G} \tau(\gamma) (-1)^{e(\G/\gamma)} = 0.
\end{align*}
where the sum is over all subgraphs of $\G$, including the trivial subgraph and $\G$ itself.  
\end{proposition}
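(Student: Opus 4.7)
The plan is to unfold the definition of $\tau$ and swap the order of summation, reducing the identity to a straightforward inclusion-exclusion that vanishes precisely because $\G$ has a cycle. Since a subgraph $\gamma \subseteq \G$ is determined by its edge set $E(\gamma) \subseteq E(\G)$ and $e(\G/\gamma) = e(\G) - e(\gamma)$ (collapsing simply removes the collapsed edges), I would first rewrite the left-hand side as
\begin{align*}
    \sum_{\gamma \subseteq \G} \tau(\gamma)(-1)^{e(\G/\gamma)}
    = (-1)^{e(\G)} \sum_{S \subseteq E(\G)} (-1)^{|S|} \tau(\gamma_S),
\end{align*}
where $\gamma_S$ denotes the subgraph with edge set $S$.

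Next, I would substitute the definition $\tau(\gamma_S) = \sum_{F \subseteq S,\ \gamma_F \text{ forest}} (-1)^{|F|}$ and exchange the order of summation, organizing by the forest $F$ first:
\begin{align*}
    (-1)^{e(\G)} \sum_{F \text{ forest}} (-1)^{|F|} \sum_{F \subseteq S \subseteq E(\G)} (-1)^{|S|}.
\end{align*}
Re-indexing the inner sum by $T = S \setminus F \subseteq E(\G) \setminus F$ collapses it to $(-1)^{|F|} \sum_{T \subseteq E(\G) \setminus F} (-1)^{|T|}$, so that the outer signs $(-1)^{|F|}$ cancel and the whole expression reduces to $(-1)^{e(\G)} \sum_{F \text{ forest}} \sum_{T \subseteq E(\G)\setminus F} (-1)^{|T|}$.

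The hypothesis that $\G$ contains at least one cycle enters at the last step: it ensures $E(\G)$ itself is not the edge set of a forest, so for every forest $F$ appearing in the outer sum the complement $E(\G) \setminus F$ is non-empty, and the inner alternating sum $\sum_{T} (-1)^{|T|}$ vanishes. Summing zeros gives zero, which is the claim. There is essentially no obstacle here; the proof is pure inclusion-exclusion, and the only points requiring minor care are verifying the sign identity $(-1)^{e(\G/\gamma)} = (-1)^{e(\G) - e(\gamma)}$ from the definition of edge collapse and keeping straight which summation variable ranges over arbitrary subgraphs of $\G$ versus over subforests.
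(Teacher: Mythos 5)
Your proposal is correct and follows essentially the same argument as the paper: pull out the sign $(-1)^{e(\G)}$ using $e(\G/\gamma)=e(\G)-e(\gamma)$, expand $\tau$, exchange the order of summation so the forest is outermost, and note that the inner alternating sum over supersets of a forest vanishes because the cycle forces $E(\G)\setminus E(\varphi)\neq\emptyset$. No gaps; the bookkeeping of signs matches the paper's proof.
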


 This statement can be seen as an identity in the incidence algebra of the subgraph poset of a graph. We will discuss a related viewpoint in Section~\ref{sec:core}, where we will interpret it as an inverse relation in the group of characters of the Hopf algebra of core graphs.

\begin{proof}
Recall that $\tau(\G) = \sum_{\varphi \subset \G} (-1)^{e(\varphi)}$ where the sum is over all subforests of $\G$. Therefore,
\begin{gather*}
    \sum_{\gamma \subset \G} \tau(\gamma) (-1)^{e(\G/\gamma)}
    =
    (-1)^{e(\G)} \sum_{\gamma \subset \G} \tau(\gamma)  (-1)^{e(\gamma)}
    =
    \\
    (-1)^{e(\G)} \sum_{\gamma \subset \G} \sum_{\substack{ \varphi \subset \gamma\\ \text{ forest } \varphi}} (-1)^{e(\varphi)}(-1)^{e(\gamma)}
    =
    (-1)^{e(\G)} \sum_{\substack{ \varphi \subset \G\\ \text{ forest } \varphi}} (-1)^{e(\varphi)} \sum_{\substack{\gamma \subset \G\\ \gamma \supset \varphi}}  (-1)^{e(\gamma)}.
\end{gather*}
The set of subgraphs of $\G$ containing $\varphi$ is in bijection with the set of subsets of $E(\G) \setminus E(\varphi)$. Because $\G$ has at least one cycle, $E(\G) \setminus E(\varphi)$ is not empty and the alternating sum 
\begin{align*}
\sum_{\substack{\gamma \subset \G\\ \gamma \supset \varphi}}  (-1)^{e(\gamma)} 
&= 
(-1)^{e(\varphi)}\sum_{E' \subset E(\G) \setminus E(\varphi)}  (-1)^{|E'|} = 0. \qedhere
\end{align*}
\end{proof}

\begin{corollary} 
\label{crll:tau_identity_summed}
\begin{align*}
1=  \sum_{\G\in  \GG_0}  \sum_{\gamma\subset\G} \frac{\tau(\gamma)(-1)^{e(\G/\gamma)}}{|\Aut(\G)|}z^{|\G|}.
\end{align*}
\end{corollary}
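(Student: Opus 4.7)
The plan is to reduce the claimed identity to Proposition~\ref{prop:tau_identity} by observing that the sum on the right-hand side splits cleanly into contributions from graphs with at least one cycle and contributions from acyclic graphs. First I would apply Proposition~\ref{prop:tau_identity} termwise: for every $\G \in \GG_0$ with at least one cycle, the inner sum $\sum_{\gamma \subset \G} \tau(\gamma)(-1)^{e(\G/\gamma)}$ vanishes, so such $\G$ contribute nothing to the outer sum. This immediately reduces the identity to a claim about the acyclic graphs in $\GG_0$.

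The second step is the small structural observation that the only acyclic graph in $\GG_0$ is the empty graph. A finite acyclic graph is a forest, each of whose connected components is a tree. A nonempty finite tree has at least one vertex of valence $1$, but admissibility requires every vertex to have valence at least $3$, and the condition $s=0$ forbids leaves in the sense of fixed points of the involution. Hence no nonempty forest lies in $\GG_0$, leaving only the empty graph $\G = \emptyset$.

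Third, I would evaluate the empty graph's contribution explicitly. The empty graph has a unique subgraph, namely itself; $\tau(\emptyset) = 1$ (the empty sum over subforests evaluates to $(-1)^0 = 1$); $\lvert\Aut(\emptyset)\rvert = 1$; $e(\emptyset/\emptyset) = 0$; and $\lvert\emptyset\rvert = 0$. So this term contributes exactly $z^0 = 1$ to the right-hand side, which matches the left-hand side.

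I do not expect any serious obstacle; the only subtle point is checking the boundary case that the empty graph is admitted as a legitimate element of $\GG_0$ with the asserted values of $\tau$, $\Aut$, and $e(\G/\gamma)$, and that admissibility together with $s = 0$ really does exclude every nonempty acyclic graph. Once these conventions are settled, the corollary is simply Proposition~\ref{prop:tau_identity} summed over $\GG_0$ plus the single nonvanishing boundary term.
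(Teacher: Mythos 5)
Your proposal is correct and follows essentially the same route as the paper: apply Proposition~\ref{prop:tau_identity} to annihilate every graph in $\GG_0$ containing a cycle, observe that admissibility (valence at least $3$) together with $s=0$ forces every non-trivial graph in $\GG_0$ to have a cycle, and evaluate the remaining empty-graph term as $1$. Your extra care with the boundary conventions for the empty graph is a harmless elaboration of the same argument.
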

\begin{proof} Since all non-trivial graphs in $\GG_0$ have cycles,  Proposition~\ref{prop:tau_identity} implies that the only non-zero contribution to the sum comes from the empty graph.
\end{proof}

To eventually obtain the statement of Proposition~\ref{prop:Tzx_graph_counting_identity}, we transform this identity using the following proposition, which is an elementary application of labeled counting.

\begin{proposition}
    \label{prop:convoluted_graph_sum}
Let  $\phi$ be a function from graphs to a formal power series ring such that for each monomial $m$ and each integer $\ell\geq 0$,  the coefficient of $m$ in $\phi(\G)$ is non-zero for only finitely many graphs $\G\in \GG_{2\ell}.$ 
Then
     $$\sum_{\G\in  \GG_0}  \sum_{\gamma\subset\G} \frac{\phi(\gamma)w^{e(\G/\gamma)}}{|\Aut(\G)|}=\sum_{\ell\geq 0} w^\ell (2\ell-1)!!    \sum_{\gamma \in \GG_{2\ell}}   \frac{\phi(\gamma)}{|\Aut \gamma|},$$
     where $w$ is a formal variable.  
\end{proposition}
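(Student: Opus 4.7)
The plan is to interpret both sides as sums over isomorphism classes of pairs $(\G, \gamma)$, where $\G \in \GG_0$ is a leaf-free graph and $\gamma \subset \G$ a subgraph. Such pairs should be in natural bijection with pairs $(\gamma, p)$ where $\gamma \in \GG_{2\ell}$ for some $\ell \geq 0$ and $p$ is a perfect matching of the $2\ell$ leaves of $\gamma$: the edges of $\G$ not lying in $\gamma$ cut into pairs of leaves of $\gamma$ (their half-edges are no longer paired by the involution of $\gamma$), and the involution of $\G$ restricts to a matching $p$ of these leaves; conversely, pairing up the leaves of $\gamma$ according to $p$ reconstructs $\G$. Under this correspondence $e(\G/\gamma) = \ell$, which accounts for the power of $w$ on the right, and the double factorial $(2\ell-1)!!$ will appear as the number of matchings of $2\ell$ points.

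First I would apply orbit-stabilizer to the inner sum of the LHS. Since $\Aut(\G)$ acts on subgraphs of $\G$ with orbits corresponding to isomorphism classes of pairs $(\G, \gamma)$, the LHS becomes
\[
\sum_{[(\G,\gamma)]} \frac{\phi(\gamma)\, w^{e(\G/\gamma)}}{|\Aut(\G,\gamma)|},
\]
where $\Aut(\G,\gamma) \leq \Aut(\G)$ denotes the stabilizer of $\gamma$. I would then group pairs by the isomorphism class of $\gamma$. For each fixed $\gamma_0 \in \GG_{2\ell}$, the pairs $(\G, \gamma)$ with $\gamma \cong \gamma_0$ are parametrized by $\Aut(\gamma_0)$-orbits of perfect matchings of the leaves of $\gamma_0$, with $\Aut(\G, \gamma)$ corresponding to the stabilizer of the matching. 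A second orbit-stabilizer computation then gives
\[
\sum_{[(\G,\gamma)] :\, \gamma \cong \gamma_0} \frac{1}{|\Aut(\G,\gamma)|} = \frac{(2\ell-1)!!}{|\Aut(\gamma_0)|},
\]
and multiplying by $\phi(\gamma_0)\, w^\ell$ and summing over $\gamma_0 \in \GG_{2\ell}$ and $\ell \geq 0$ yields the RHS.

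The main obstacle is verifying the identification $\Aut(\G, \gamma) \cong \mathrm{Stab}_{\Aut(\gamma)}(p)$, i.e.\ that the two orbit-stabilizer reductions are compatible. Because the combinatorial definition of a subgraph keeps all half-edges and all vertices of the ambient graph (an edge removed from $\gamma$ simply splits into two leaves of $\gamma$), any automorphism of $\gamma$ stabilizing $p$ automatically commutes with the involution of $\G$ and so lifts uniquely to an automorphism of $\G$ preserving $\gamma$; conversely, any automorphism of $\G$ preserving $\gamma$ restricts to an automorphism of $\gamma$ that permutes the complementary-edge pairs, hence stabilizes $p$. Once this identification is in place the remaining manipulations are routine, and the finiteness hypothesis on $\phi$ guarantees that all rearrangements of sums are legitimate in the ambient power series ring.
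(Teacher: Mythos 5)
Your proof is correct, but it takes a genuinely different route from the paper's. The paper passes to \emph{totally labeled} graphs (edges ordered and oriented, vertices and leaves ordered), on which automorphism groups act freely; both sides become sums over labeled graphs weighted by $1/(e!\,2^e\,v!)$, the order of summation is exchanged, and the factor $(2\ell-1)!!=(2\ell)!/(\ell!\,2^\ell)$ emerges from counting shuffles of the edge orderings together with the $(2\ell)!$ leaf labelings. You instead stay with unlabeled graphs and argue via groupoid cardinality: isomorphism classes of pairs $(\G,\gamma)$ correspond to pairs $(\gamma,p)$ with $p$ a perfect matching of the leaves of $\gamma$, and two orbit--stabilizer computations give $\sum_{[(\G,\gamma)]:\,\gamma\cong\gamma_0}1/|\Aut(\G,\gamma)|=(2\ell-1)!!/|\Aut(\gamma_0)|$, the double factorial now appearing as the number of matchings of $2\ell$ points. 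The crucial step, which you correctly isolate and justify, is the identification $\Aut(\G,\gamma)\cong\mathrm{Stab}_{\Aut(\gamma)}(p)$; this does hold precisely because the combinatorial notion of subgraph retains all half-edges and vertices, so the involution of $\G$ is recovered from $\iota_\gamma$ together with $p$, and note also that valences are unchanged so $\gamma$ is indeed admissible and lies in $\GG_{2\ell}$ (with $e(\G/\gamma)=\ell$). Your argument is the more direct and conceptual one; the paper's labeled-graph bookkeeping avoids any stabilizer analysis and fits the labelled-enumeration formalism it reuses elsewhere (e.g.\ in the exponential formula), but both establish the identity, with the finiteness hypothesis on $\phi$ licensing the rearrangements in either version.
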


\begin{proof}  
    To prove the proposition we will use (totally) labeled graphs. 
 Here a {\em labeling} of $\G$ with $e(\G)$ edges, $v(\G)$ vertices and $s(\G)$ leaves consists of 
\begin{itemize}
\item ordering the edges, i.e.\ labeling them     $1,\ldots,e(\G)$,
\item orienting each edge, 
\item ordering the vertices, i.e.\ labeling them  $1,\ldots,v(\G)$,
\item ordering the leaves, i.e.\ labeling them   $1,\ldots,s(\G)$.  
\end{itemize}
The set of labeled graphs with $s$ leaves will be denoted $\LG_s$.

The advantage of using labeled graphs instead of unlabeled graphs is that a sum of terms $1/|\Aut(\G)|$ over unlabeled graphs on $v$ vertices and $e$ edges becomes a sum of $1/(v!e!2^e)$ over labeled graphs using the orbit-stabilizer theorem. The group $\Aut(\G)$ acts on the set of labelings of $\G$, an orbit is a labeled graph and all stabilizers are trivial. This simplifies expressions that involve these automorphism groups.  In particular, abbreviating $v=v(\G), e=e(\G)$ and $d=e(\gamma)$  we have
 \begin{align*}
    \sum_{\G\in  \GG_0}  \sum_{\gamma\subset\G} \frac{\phi(\gamma)w^{e(\G/\gamma)}}{|\Aut(\G)|}
=\sum_{\G\in  \LG_0}  \sum_{\gamma\subset\G}\frac{w^{e-d}}{e!v!2^{e}}\phi(\gamma).
\end{align*}
A subgraph $\gamma$ inherits a labeling from $\G$:  the vertices are the same, so they have the same labels.  The ordering  and orientation on the edges of $\G$  induces an ordering  and orientation on the edges of $\gamma$, giving a labeling on these.  The edges not in $\gamma$ also have an induced ordering, and we use that to order the leaves of $\gamma$ by the following rule:  If there are $\ell$ edges in $E(\G)\setminus E(\gamma)$, label the leaf corresponding to the initial half of the $i$-th  edge by $i$, and the leaf corresponding to the terminal half by $i+\ell$.  

We now change the order of summation.  Remembering that $\gamma$ has an even number of leaves, we get
\begin{align*}
\sum_{\G\in  \LG_0}  \sum_{\gamma\subset\G}\frac{w^{e-d}}{e!v!2^{e}}\phi(\gamma)&=
  \sum_{\ell\geq 0}\sum_{\gamma\in\LG_{2\ell}}   \sum_{\G\in \LG_0, \G\supset \gamma}\frac{w^{e-d}}{e!v!2^{e}}\phi(\gamma)\\
&=\sum_{\ell\geq 0}\sum_{\gamma\in\LG_{2\ell}}   \sum_{\G\in \LG_0, \G\supset \gamma}\frac{w^{\ell}}{(\ell+d)!v!2^{\ell+d}}\phi(\gamma).
  \intertext{
We next note that a labeling on $\gamma$  specifies an isomorphism type of $\Gamma$ containing $\gamma$, using the rule that the $i$-th leaf should be connected to the $(i+\ell)$-th leaf.  This also orders the edges in $E(\Gamma)\setminus E(\gamma)$ and orients them from $i$ to $i+\ell$.  The edges of $\gamma\subset \G$ are ordered and oriented  as subsets of $E(\gamma)$.  There are $\binom{d+\ell}{\ell}$ ways of shuffling the two orderings to get a total ordering on $E(\G)$ that induces  the given orderings on $E(\gamma)$ and $E(\G)\setminus E(\gamma)$.  Thus the last expression becomes 
}
     &=\sum_{\ell\geq 0}\sum_{\gamma\in\LG_{2\ell}} \binom{d+\ell}{\ell} \frac{w^{\ell}}{(d+\ell)!v!2^{d+\ell}}\phi(\gamma) \\
 &=\sum_{\ell\geq 0} \frac{w^\ell}{2^\ell}\sum_{\gamma\in\LG_{2\ell}}  \frac{(d+\ell)!}{\ell!d!}\frac{1}{(d+\ell)!v!2^{d}}\phi(\gamma) \\
 &=\sum_{\ell\geq 0} \frac{1}{\ell!2^\ell} w^\ell \sum_{\gamma\in\LG_{2\ell}}  \frac{\phi(\gamma)}{d!v!2^{d}}.
  \intertext{
We now translate back to unlabeled graphs to get 
}
& =\sum_{\ell \geq 0} \frac{1}{\ell!2^\ell} w^\ell \sum_{\gamma\in\GG_{2\ell}}  \frac{\phi(\gamma)(2\ell)!}{|\Aut(\gamma)|}\\
& =\sum_{\ell \geq 0} \frac{(2\ell)!}{\ell!2^\ell} w^\ell \sum_{\gamma\in\GG_{2\ell}}  \frac{\phi(\gamma)}{|\Aut(\gamma)|}\\
& =\sum_{\ell \geq 0}(2\ell-1)!! w^\ell \sum_{\gamma\in\GG_{2\ell}}  \frac{\phi(\gamma)}{|\Aut(\gamma)|}.
\qedhere
\end{align*}
\end{proof}

\begin{proof}[Proof of Proposition~\ref{prop:Tzx_graph_counting_identity}]
    Use Proposition~\ref{prop:convoluted_graph_sum} with $\phi(\Gamma) = \tau(\Gamma) z^{|\Gamma|}$ and $w = -z$, together with the observation that $(-1)^{e(\G/\gamma)}z^{|\G|}=z^{|\gamma|}(-z)^{e(\G)-e(\gamma)}$. We get
$$
        \sum_{\G \in \GG_0}  \sum_{\gamma \subset \G}  \frac{ \tau(\gamma)(-1)^{e({\G/\gamma})}}{|\Aut \G|}z^{|\G|} 
        =\sum_{\ell\geq 0} (-z)^\ell(2\ell-1)!! \left(\sum_{\gamma\in\GG_{2\ell}} \frac{\tau(\gamma)}{|\Aut(\gamma)|} z^{|\gamma|}\right).
$$
Apply Corollary~\ref{crll:tau_identity_summed} to obtain eq.\ \eqref{eqn:graph_sum_tau_identity} and Corollary~\ref{crll:disconnected_exp} after that.
\end{proof}
\section{The Hopf algebra of core graphs}\label{sec:core}

A graph with no separating edges is called a {\it core graph}, {\it bridgeless} or {\it 1-particle irreducible graph}. Let $\HH$  denote the $\Q$-vector space generated by all finite core graphs. In contrast to the rest of the article, we will  include graphs with bivalent edges as generators of $\HH$. 
The vector space $\HH$ can be made into an algebra whose multiplication is induced by disjoint union of generators; here we identify all graphs with no edges  with the neutral element $\one$ for  this multiplication.  (Thus a topological graph representing the neutral element is a (possibly  empty) disjoint union of isolated vertices and star graphs.)

The algebra $\HH$ can also be equipped with a coproduct $\Delta \colon \HH \to \HH \otimes \HH$, defined by 
\begin{align}
    \label{eqn:coproduct}
\Delta(\G)=\sum_{\gamma\subset\G} \gamma\otimes \G/\gamma,
\end{align}
where the sum is over all core subgraphs of $\G$.

\begin{example}
The graph 
$
{
\def \scale {1.3ex}
\begin{tikzpicture}[x=\scale,y=\scale,baseline={([yshift=-.7ex]current bounding box.center)}] \begin{scope}[node distance=1] \coordinate (v0); \coordinate[right=.5 of v0] (v4); \coordinate[above right= of v4] (v2); \coordinate[below right= of v4] (v3); \coordinate[below right= of v2] (v5); \coordinate[right=.5 of v5] (v1); \coordinate[above right= of v2] (o1); \coordinate[below right= of v2] (o2);  \coordinate[below left=.5 of v0] (i1); \coordinate[above left=.5 of v0] (i2); \coordinate[below right=.5 of v1] (o1); \coordinate[above right=.5 of v1] (o2);  \draw (v0) -- (i1); \draw (v0) -- (i2); \draw (v1) -- (o1); \draw (v1) -- (o2);  \draw (v0) to[bend left=20] (v2); \draw (v0) to[bend right=20] (v3); \draw (v1) to[bend left=20] (v3); \draw (v1) to[bend right=20] (v2);  \draw (v2) to[bend right=60] (v3); \draw (v2) to[bend left=60] (v3);  \filldraw (v0) circle(1pt); \filldraw (v1) circle(1pt); \filldraw (v2) circle(1pt); \filldraw (v3) circle(1pt); \end{scope} \end{tikzpicture}
}
$ has seven mutually non-isomorphic core subgraphs---including the trivial subgraph graph (identified with $\one$) and the graph $
{
\def \scale {1.3ex}
\begin{tikzpicture}[x=\scale,y=\scale,baseline={([yshift=-.7ex]current bounding box.center)}] \begin{scope}[node distance=1] \coordinate (v0); \coordinate[right=.5 of v0] (v4); \coordinate[above right= of v4] (v2); \coordinate[below right= of v4] (v3); \coordinate[below right= of v2] (v5); \coordinate[right=.5 of v5] (v1); \coordinate[above right= of v2] (o1); \coordinate[below right= of v2] (o2);  \coordinate[below left=.5 of v0] (i1); \coordinate[above left=.5 of v0] (i2); \coordinate[below right=.5 of v1] (o1); \coordinate[above right=.5 of v1] (o2);  \draw (v0) -- (i1); \draw (v0) -- (i2); \draw (v1) -- (o1); \draw (v1) -- (o2);  \draw (v0) to[bend left=20] (v2); \draw (v0) to[bend right=20] (v3); \draw (v1) to[bend left=20] (v3); \draw (v1) to[bend right=20] (v2);  \draw (v2) to[bend right=60] (v3); \draw (v2) to[bend left=60] (v3);  \filldraw (v0) circle(1pt); \filldraw (v1) circle(1pt); \filldraw (v2) circle(1pt); \filldraw (v3) circle(1pt); \end{scope} \end{tikzpicture}
}
$ itself.
The coproduct is given by 
{
\def \scale {2ex}
\begin{gather*}
\Delta 
\begin{tikzpicture}[x=\scale,y=\scale,baseline={([yshift=-.7ex]current bounding box.center)}] \begin{scope}[node distance=1] \coordinate (v0); \coordinate[right=.5 of v0] (v4); \coordinate[above right= of v4] (v2); \coordinate[below right= of v4] (v3); \coordinate[below right= of v2] (v5); \coordinate[right=.5 of v5] (v1); \coordinate[above right= of v2] (o1); \coordinate[below right= of v2] (o2);  \coordinate[below left=.5 of v0] (i1); \coordinate[above left=.5 of v0] (i2); \coordinate[below right=.5 of v1] (o1); \coordinate[above right=.5 of v1] (o2);  \draw (v0) -- (i1); \draw (v0) -- (i2); \draw (v1) -- (o1); \draw (v1) -- (o2);  \draw (v0) to[bend left=20] (v2); \draw (v0) to[bend right=20] (v3); \draw (v1) to[bend left=20] (v3); \draw (v1) to[bend right=20] (v2);  \draw (v2) to[bend right=60] (v3); \draw (v2) to[bend left=60] (v3);  \filldraw (v0) circle(1pt); \filldraw (v1) circle(1pt); \filldraw (v2) circle(1pt); \filldraw (v3) circle(1pt); \end{scope} \end{tikzpicture}
=
\begin{tikzpicture}[x=\scale,y=\scale,baseline={([yshift=-.7ex]current bounding box.center)}] \begin{scope}[node distance=1] \coordinate (v0); \coordinate[right=.5 of v0] (v4); \coordinate[above right= of v4] (v2); \coordinate[below right= of v4] (v3); \coordinate[below right= of v2] (v5); \coordinate[right=.5 of v5] (v1); \coordinate[above right= of v2] (o1); \coordinate[below right= of v2] (o2);  \coordinate[below left=.5 of v0] (i1); \coordinate[above left=.5 of v0] (i2); \coordinate[below right=.5 of v1] (o1); \coordinate[above right=.5 of v1] (o2);  \draw (v0) -- (i1); \draw (v0) -- (i2); \draw (v1) -- (o1); \draw (v1) -- (o2);  \draw (v0) to[bend left=20] (v2); \draw (v0) to[bend right=20] (v3); \draw (v1) to[bend left=20] (v3); \draw (v1) to[bend right=20] (v2);  \draw (v2) to[bend right=60] (v3); \draw (v2) to[bend left=60] (v3);  \filldraw (v0) circle(1pt); \filldraw (v1) circle(1pt); \filldraw (v2) circle(1pt); \filldraw (v3) circle(1pt); \end{scope} \end{tikzpicture}
\otimes
\one
+
2~
\begin{tikzpicture}[x=\scale,y=\scale,baseline={([yshift=-.7ex]current bounding box.center)}] \begin{scope}[node distance=1] \coordinate (v0); \coordinate[right=.5 of v0] (v4); \coordinate[above right= of v4] (v2); \coordinate[below right= of v4] (v3); \coordinate[below right= of v2] (v5); \coordinate[right=.5 of v5] (v1); \coordinate[above right= of v2] (o1); \coordinate[below right= of v2] (o2);  \coordinate[below left=.5 of v0] (i1); \coordinate[above left=.5 of v0] (i2); \coordinate[below right=.5 of v1] (o1); \coordinate[above right=.5 of v1] (o2);  \coordinate[above =.5 of v2] (e2); \coordinate[below =.5 of v3] (e3);  \draw (v0) -- (i1); \draw (v0) -- (i2); \draw (v1) -- (o1); \draw (v1) -- (o2); \draw (v2) -- (e2); \draw (v3) -- (e3);  \draw (v0) to[bend left=20] (v2); \draw (v0) to[bend right=20] (v3); \draw (v1) to[bend left=20] (v3); \draw (v1) to[bend right=20] (v2);  \draw (v2) -- (v3);   \filldraw (v0) circle(1pt); \filldraw (v1) circle(1pt); \filldraw (v2) circle(1pt); \filldraw (v3) circle(1pt); \end{scope} \end{tikzpicture}
\otimes 
\begin{tikzpicture}[x=\scale,y=\scale,baseline={([yshift=-.7ex]current bounding box.center)}] \begin{scope}[node distance=1] \coordinate (v) ; \def \rud {.4}; \coordinate [above=.4 of v] (m);  \coordinate (i1) at ([shift=(225:\rud)]v); \coordinate (i2) at ([shift=(255:\rud)]v); \coordinate (i3) at ([shift=(285:\rud)]v); \coordinate (i4) at ([shift=(315:\rud)]v);  \draw (v) -- (i1); \draw (v) -- (i2); \draw (v) -- (i3); \draw (v) -- (i4);  \filldraw (v) circle (1pt);  \draw (m) circle (.4); \end{scope} \end{tikzpicture}
+
2~
\begin{tikzpicture}[x=\scale,y=\scale,baseline={([yshift=-.7ex]current bounding box.center)}] \begin{scope}[node distance=1] \coordinate (v0); \coordinate[right=.5 of v0] (v4); \coordinate[above right= of v4] (v2); \coordinate[below right= of v4] (v3); \coordinate[below right= of v2] (v5); \coordinate[right=.5 of v5] (v1); \coordinate[above right= of v2] (o1); \coordinate[below right= of v2] (o2);  \coordinate[below left=.5 of v0] (i1); \coordinate[above left=.5 of v0] (i2); \coordinate[below right=.5 of v1] (o1); \coordinate[above right=.5 of v1] (o2); \coordinate[above left=.5 of v2] (e2); \coordinate[below left=.5 of v3] (e3);    \draw (v1) -- (o1); \draw (v1) -- (o2);    \draw (v2) -- (e2); \draw (v3) -- (e3); \draw (v1) to[bend left=20] (v3); \draw (v1) to[bend right=20] (v2);  \draw (v2) to[bend right=60] (v3); \draw (v2) to[bend left=60] (v3);   \filldraw (v1) circle(1pt); \filldraw (v2) circle(1pt); \filldraw (v3) circle(1pt); \end{scope} \end{tikzpicture}
\otimes 
\begin{tikzpicture}[x=\scale,y=\scale,baseline={([yshift=-.7ex]current bounding box.center)}] \begin{scope}[node distance=1] \coordinate (v0); \coordinate[right=.4 of v0] (m); \coordinate[right=.4 of m] (v1);  \coordinate[below left=.5 of v0] (i1); \coordinate[above left=.5 of v0] (i2); \coordinate[below right=.5 of v1] (o1); \coordinate[above right=.5 of v1] (o2);  \draw (v0) -- (i1); \draw (v0) -- (i2); \draw (v1) -- (o1); \draw (v1) -- (o2);  \filldraw (v0) circle(1pt); \filldraw (v1) circle(1pt);  \draw (m) circle (.4); \end{scope} \end{tikzpicture}
+
\begin{tikzpicture}[x=\scale,y=\scale,baseline={([yshift=-.7ex]current bounding box.center)}] \begin{scope}[node distance=1] \coordinate (v0); \coordinate[right=.5 of v0] (v4); \coordinate[above right= of v4] (v2); \coordinate[below right= of v4] (v3); \coordinate[below right= of v2] (v5); \coordinate[right=.5 of v5] (v1); \coordinate[above right= of v2] (o1); \coordinate[below right= of v2] (o2);  \coordinate[below left=.5 of v0] (i1); \coordinate[above left=.5 of v0] (i2); \coordinate[below right=.5 of v1] (o1); \coordinate[above right=.5 of v1] (o2);  \coordinate[above left=.5 of v2] (e2); \coordinate[below left=.5 of v3] (e3); \coordinate[above right=.5 of v2] (h2); \coordinate[below right=.5 of v3] (h3);  \draw (v0) -- (i1); \draw (v0) -- (i2); \draw (v1) -- (o1); \draw (v1) -- (o2); \draw (v2) -- (h2); \draw (v3) -- (h3); \draw (v2) -- (e2); \draw (v3) -- (e3);  \draw (v0) to[bend left=20] (v2); \draw (v0) to[bend right=20] (v3); \draw (v1) to[bend left=20] (v3); \draw (v1) to[bend right=20] (v2);     \filldraw (v0) circle(1pt); \filldraw (v1) circle(1pt); \filldraw (v2) circle(1pt); \filldraw (v3) circle(1pt); \end{scope} \end{tikzpicture}
\otimes 
\begin{tikzpicture}[x=\scale,y=\scale,baseline={([yshift=-.7ex]current bounding box.center)}] \begin{scope}[node distance=1] \coordinate (v) ; \def \rad {.8}; \def \rud {.4}; \coordinate (s1) at ([shift=(30:\rad)]v); \coordinate (s2) at ([shift=(150:\rad)]v);  \coordinate (i1) at ([shift=(225:\rud)]v); \coordinate (i2) at ([shift=(255:\rud)]v); \coordinate (i3) at ([shift=(285:\rud)]v); \coordinate (i4) at ([shift=(315:\rud)]v);  \draw (v) -- (i1); \draw (v) -- (i2); \draw (v) -- (i3); \draw (v) -- (i4);  \draw (v) to[out=90,in=120] (s1) to[out=-60,in=0-30] (v); \draw (v) to[out=210,in=240] (s2) to[out=60,in=90] (v); \filldraw (v) circle (1pt); \end{scope} \end{tikzpicture}
\\
+
4~
\begin{tikzpicture}[x=\scale,y=\scale,baseline={([yshift=-.7ex]current bounding box.center)}] \begin{scope}[node distance=1] \coordinate (v0); \coordinate[right=.5 of v0] (v4); \coordinate[above right= of v4] (v2); \coordinate[below right= of v4] (v3); \coordinate[below right= of v2] (v5); \coordinate[right=.5 of v5] (v1); \coordinate[above right= of v2] (o1); \coordinate[below right= of v2] (o2);  \coordinate[below left=.5 of v0] (i1); \coordinate[above left=.5 of v0] (i2); \coordinate[below right=.5 of v1] (o1); \coordinate[above right=.5 of v1] (o2); \coordinate[above left=.5 of v2] (e2); \coordinate[below left=.5 of v3] (e3); \coordinate[above right=.5 of v2] (h2); \coordinate[below right=.5 of v3] (h3);    \draw (v1) -- (o1); \draw (v1) -- (o2);    \draw (v2) -- (e2); \draw (v3) -- (e3); \draw (v2) -- (h2); \draw (v3) -- (h3); \draw (v1) to[bend left=20] (v3); \draw (v1) to[bend right=20] (v2);   \draw (v2) -- (v3);   \filldraw (v1) circle(1pt); \filldraw (v2) circle(1pt); \filldraw (v3) circle(1pt); \end{scope} \end{tikzpicture}
\otimes 
\begin{tikzpicture}[x=\scale,y=\scale,baseline={([yshift=-.7ex]current bounding box.center)}] \begin{scope}[node distance=1] \coordinate (v0); \coordinate[right=.4 of v0] (m); \coordinate[right=.4 of m] (v1); \coordinate[right=.3 of v1] (v2);  \coordinate[below left=.5 of v0] (i1); \coordinate[above left=.5 of v0] (i2); \coordinate[below =.5 of v1] (o1); \coordinate[above =.5 of v1] (o2);  \draw (v0) -- (i1); \draw (v0) -- (i2); \draw (v1) -- (o1); \draw (v1) -- (o2);   \filldraw (v0) circle(1pt); \filldraw (v1) circle(1pt);  \draw (v2) circle (.3); \draw (m) circle (.4); \end{scope} \end{tikzpicture}
+
\begin{tikzpicture}[x=\scale,y=\scale,baseline={([yshift=-.7ex]current bounding box.center)}] \begin{scope}[node distance=1] \coordinate (v0); \coordinate[right=.5 of v0] (v4); \coordinate[above right= of v4] (v2); \coordinate[below right= of v4] (v3); \coordinate[below right= of v2] (v5); \coordinate[right=.5 of v5] (v1); \coordinate[above right= of v2] (o1); \coordinate[below right= of v2] (o2);  \coordinate[below left=.5 of v0] (i1); \coordinate[above left=.5 of v0] (i2); \coordinate[below right=.5 of v1] (o1); \coordinate[above right=.5 of v1] (o2);  \coordinate[above left=.5 of v2] (e2); \coordinate[below left=.5 of v3] (e3); \coordinate[above right=.5 of v2] (h2); \coordinate[below right=.5 of v3] (h3);      \draw (v2) -- (h2); \draw (v3) -- (h3); \draw (v2) -- (e2); \draw (v3) -- (e3);       \draw (v2) to[bend right=60] (v3); \draw (v2) to[bend left=60] (v3);    \filldraw (v2) circle(1pt); \filldraw (v3) circle(1pt); \end{scope} \end{tikzpicture}
\otimes 
\begin{tikzpicture}[x=\scale,y=\scale,baseline={([yshift=-.7ex]current bounding box.center)}] \begin{scope}[node distance=1] \coordinate (v0); \coordinate[right=.4 of v0] (m1); \coordinate[right=.4 of m1] (v1); \coordinate[right=.4 of v1] (m2); \coordinate[right=.4 of m2] (v2);  \coordinate[below left=.5 of v0] (i1); \coordinate[above left=.5 of v0] (i2); \coordinate[below right=.5 of v2] (o1); \coordinate[above right=.5 of v2] (o2);  \draw (v0) -- (i1); \draw (v0) -- (i2); \draw (v2) -- (o1); \draw (v2) -- (o2);  \filldraw (v0) circle(1pt); \filldraw (v1) circle(1pt); \filldraw (v2) circle(1pt);  \draw (m1) circle (.4); \draw (m2) circle (.4); \end{scope} \end{tikzpicture}
+
\one 
\otimes 
\begin{tikzpicture}[x=\scale,y=\scale,baseline={([yshift=-.7ex]current bounding box.center)}] \begin{scope}[node distance=1] \coordinate (v0); \coordinate[right=.5 of v0] (v4); \coordinate[above right= of v4] (v2); \coordinate[below right= of v4] (v3); \coordinate[below right= of v2] (v5); \coordinate[right=.5 of v5] (v1); \coordinate[above right= of v2] (o1); \coordinate[below right= of v2] (o2);  \coordinate[below left=.5 of v0] (i1); \coordinate[above left=.5 of v0] (i2); \coordinate[below right=.5 of v1] (o1); \coordinate[above right=.5 of v1] (o2);  \draw (v0) -- (i1); \draw (v0) -- (i2); \draw (v1) -- (o1); \draw (v1) -- (o2);  \draw (v0) to[bend left=20] (v2); \draw (v0) to[bend right=20] (v3); \draw (v1) to[bend left=20] (v3); \draw (v1) to[bend right=20] (v2);  \draw (v2) to[bend right=60] (v3); \draw (v2) to[bend left=60] (v3);  \filldraw (v0) circle(1pt); \filldraw (v1) circle(1pt); \filldraw (v2) circle(1pt); \filldraw (v3) circle(1pt); \end{scope} \end{tikzpicture}.
\end{gather*}%
}
Note that the complete contraction $\G/\G$ has no edges, so is identified with $\one$. Also note that we omitted isolated vertices of the subgraphs on the left hand side of the tensor product since isolated vertices are also identified with $\one$.
\end{example}

This coproduct is coassociative, making $\HH$ into a bialgebra.  In fact  Kreimer showed that $\HH$ has a  Hopf algebra structure \cite{kreimer2009core}.  The unit $\unit \colon \Q \to \HH$ sends $\unit \colon q \mapsto q \one$.  The co-unit $\counit \colon \HH\to \Q$ sends $\one$ to $1\in \Q$ and all  other graphs to $0$.
The antipode $S\colon \HH\to \HH$ can be defined inductively by $S(\one)=\one$ and, 
\begin{align*} 
 S(\G)=-\sum_{\gamma\subsetneq \G} S(\gamma)\G/\gamma \text{ for } \Gamma \neq \one,
\end{align*}
where the sum is over all core subgraphs of $\G$ which are not equal to $\G$. This recursion terminates since the graphs $\gamma$ in the sum have fewer independent cycles (i.e. smaller first Betti number)  than $\G$. The result is a polynomial in core graphs. 
We refer the reader to \cite{sweedler1969hopf} for a general account of Hopf algebras or \cite[Ch.\ 3]{borinsky2018graphs} for  more information about this specific Hopf algebra.
 
 A {\em character} on $\HH$ is a linear map $\phi$ which satisfies $\phi(\G_1 \G_2) = \phi(\G_1) \phi(\G_2).$ The convolution $\phi \star \psi$ of two characters 
is defined by $$(\phi\star \psi)(\Gamma)=\sum_{\gamma \subset \G} \phi(\gamma) \psi(\Gamma/\gamma),$$ where we again sum over all core subgraphs of $\G$. Because $\HH$ is a Hopf algebra, the set of all characters from $\HH$ to any commutative algebra forms a group under the convolution product. This follows from the antipode being the inverse to the identity map, $\id \colon \HH \to \HH$, in the sense that $\id \star S = S \star \id= \unit \circ \counit$. The map $\unit \circ \counit \colon \HH \to \HH$ is the identity element of the $\star$-group of characters $\HH \to \HH$. It satisfies $\unit \circ \counit(\one) = \one$ and $\unit \circ \counit(\G) = 0$ if $\G \neq \one$. If $\phi$ is a character $\HH \to \mathcal{A}$ which maps to a unital commutative algebra $\mathcal{A}$, then $\phi^{\star -1} := \phi \circ S$ is the inverse of $\phi$ under the star product in the sense that $$ \phi^{\star -1} \star \phi = \phi \star \phi^{\star -1} = \unit_{\mathcal{A}} \circ \counit,$$ where $\unit_{\mathcal{A}}$ is the unit of $\mathcal{A}$.

Because $\tau$ is multiplicative on disjoint unions of graphs, it induces a character $\HH \to \Q$. We can define the even simpler character $\sigma(\Gamma) = (-1)^{e(\Gamma)}$ and formulate Proposition~\ref{prop:tau_identity} in the Hopf algebra language:
\begin{proposition}
$\tau \star \sigma = \sigma \star \tau = \unit_\Q \circ \counit$. 
\end{proposition}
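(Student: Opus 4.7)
The plan is to recognize this identity as a Hopf-algebraic repackaging of Proposition~\ref{prop:tau_identity}, combined with the group structure on characters of $\HH$.

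First I would check the value at the unit: the only core subgraph of $\one$ is $\one$ itself, and $\tau(\one) = \sigma(\one) = 1$, so $(\tau \star \sigma)(\one) = (\sigma \star \tau)(\one) = 1 = (\unit_\Q \circ \counit)(\one)$. For a nontrivial core graph $\G$, I would compute
\[
(\tau \star \sigma)(\G) \;=\; \sum_{\gamma \subset \G} \tau(\gamma)\,\sigma(\G/\gamma) \;=\; \sum_{\gamma \subset \G} \tau(\gamma)\,(-1)^{e(\G/\gamma)},
\]
where the sum runs over core subgraphs of $\G$. The crucial observation is that $\tau$ vanishes on any graph with a separating edge (Lemma~(2.3) of \cite{SV87a}, cited earlier in the paper); equivalently, $\tau(\gamma)=0$ whenever $\gamma$ is not core. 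Hence the sum is unchanged if we extend it to all subgraphs of $\G$ in the sense of Section~2. Any nontrivial core graph contains a cycle, so Proposition~\ref{prop:tau_identity} applies and yields $(\tau \star \sigma)(\G) = 0$, matching $(\unit_\Q \circ \counit)(\G) = 0$.

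Finally, for the symmetric equality $\sigma \star \tau = \unit_\Q \circ \counit$ I would invoke the group structure: since $\HH$ is a Hopf algebra, the characters $\HH \to \Q$ form a group under $\star$, so a right inverse is automatically a two-sided inverse, and $\sigma$ is automatically a left inverse of $\tau$ as well. Alternatively, using $e(\G) = e(\gamma) + e(\G/\gamma)$ one can rewrite $(\sigma \star \tau)(\G) = (-1)^{e(\G)} \sum_{\gamma \subset \G} \tau(\G/\gamma)(-1)^{e(\gamma)}$ and reduce to a mirror image of Proposition~\ref{prop:tau_identity} via the involution $\gamma \leftrightarrow \G/\gamma$ on core subgraphs.

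The only delicate point is the passage between the Hopf-algebraic sum over core subgraphs and the combinatorial sum over all subgraphs from Section~2; this is handled cleanly by the vanishing of $\tau$ on graphs with separating edges, so no new calculation beyond Proposition~\ref{prop:tau_identity} is required.
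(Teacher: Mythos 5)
Your proof is correct and takes essentially the same route as the paper: Proposition~\ref{prop:tau_identity} yields $\tau \star \sigma = \unit_\Q \circ \counit$, and the group structure on characters of $\HH$ gives $\sigma \star \tau = \unit_\Q \circ \counit$. Your explicit observation that $\tau$ vanishes on non-core subgraphs, so the convolution sum over core subgraphs may be extended to the sum over all subgraphs appearing in Proposition~\ref{prop:tau_identity}, is precisely the justification the paper leaves implicit.
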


\begin{proof}
By Proposition~\ref{prop:tau_identity} and the definition of the $\star$ product  $\tau \star \sigma = \unit_\Q \circ \counit$. Because the characters form a group, we also have $\sigma \star \tau = \unit_\Q \circ \counit$.
\end{proof}

Although the Hopf algebra $\HH$ and its coproduct are  defined only on core graphs, we can also consider the maps $\tau$ and $\sigma$ on the space of all graphs.
 The linear space $\HG$ which is generated by all graphs can be made into a (left) $\HH$-comodule by defining a \textit{coaction}, $\rho \colon \HG \to \HH \otimes \HG$,  using the formula \eqref{eqn:coproduct} with $\rho$ in place of $\Delta$. The left side of the tensor product in \eqref{eqn:coproduct} will always be a core graph and can naturally be associated with an element in $\HH$. The star product applied on characters of $\HG$ in the same way as on characters of $\HH$ becomes an \textit{action} this way. See \cite[Ch.\ 3]{borinsky2018graphs} for details. 

Applying $\sigma$ to the weighted sum of all connected graphs with no leaves gives an especially interesting result: 
\begin{proposition}
\label{prop:bernoulli_graphs_sum}
\begin{align*}
    \sum_{\substack{\Gamma\in \GG^c_0\\|\G|=n}}\frac{\sigma(\G)}{|\Aut(\G)|}=\frac{\zeta(-n)}{n}=-\frac{B_{n+1}}{n(n+1)} \text{ for all } n \geq 1.
\end{align*}
\end{proposition}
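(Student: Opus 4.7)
The plan is to identify the generating function
\begin{align*}
F(z) := \sum_{n \geq 1} \left(\sum_{\substack{\Gamma \in \GG^c_0 \\ |\Gamma| = n}} \frac{\sigma(\Gamma)}{|\Aut(\Gamma)|}\right) z^n
\end{align*}
with the Stirling remainder series $-\sum_{k \geq 1} \frac{B_{2k}}{(2k-1)(2k)} z^{2k-1}$ via a zero-dimensional formal Feynman integral.

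First, I will apply the exponential formula: since $\sigma(\Gamma) = (-1)^{e(\Gamma)}$ is multiplicative on disjoint unions and there are only finitely many admissible graphs with a given value of $|\Gamma|$ (the condition $|v|\geq 3$ forces $v(\Gamma) \leq 2|\Gamma|$), Lemma~\ref{lmm:exponential_formula} applied to $\phi(\Gamma) = \sigma(\Gamma) z^{|\Gamma|}$ gives
\begin{align*}
\exp(F(z)) = \sum_{\Gamma \in \GG_0} \frac{\sigma(\Gamma) z^{|\Gamma|}}{|\Aut(\Gamma)|}.
\end{align*}

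Next, I will recognize the right-hand side as the formal asymptotic expansion, in the saddle-point sense, of
\begin{align*}
Z(z) := \int \frac{d\phi}{\sqrt{-2\pi z}} \exp\left(\frac{e^\phi - 1 - \phi}{z}\right).
\end{align*}
This follows by Wick's theorem in zero dimensions: the quadratic part $\phi^2/(2z)$ of the exponent supplies the propagator $-z$, and each higher-order term $\phi^k/(k!z)$ with $k \geq 3$ supplies an admissible vertex of valence $k$ with coupling $1/z$, so each connected admissible graph $\Gamma$ with no leaves contributes $(-1)^{e(\Gamma)} z^{e(\Gamma) - v(\Gamma)}/|\Aut(\Gamma)| = \sigma(\Gamma) z^{|\Gamma|}/|\Aut(\Gamma)|$. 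The substitution $u = e^\phi$ followed by $t = -u/z$ then converts $Z(z)$ into the classical integral representation of the gamma function,
\begin{align*}
Z(z) = \frac{\Gamma(s)\, e^{s}}{s^{s-1/2} \sqrt{2\pi}}, \qquad s := -1/z.
\end{align*}

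Finally, taking logarithms and substituting Stirling's asymptotic expansion $\log \Gamma(s) \sim (s-1/2)\log s - s + \tfrac12\log(2\pi) + \sum_{k \geq 1} B_{2k}/(2k(2k-1) s^{2k-1})$, the elementary terms all cancel, and using $s^{-(2k-1)} = -z^{2k-1}$ yields $F(z) \sim -\sum_{k \geq 1} \frac{B_{2k}}{(2k-1)(2k)} z^{2k-1}$. Extracting the coefficient of $z^n$ and observing that $B_{n+1}$ vanishes for even $n \geq 2$ gives $[z^n] F(z) = -B_{n+1}/(n(n+1)) = \zeta(-n)/n$ for all $n \geq 1$, where the last equality is the standard formula $\zeta(-n) = -B_{n+1}/(n+1)$. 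The main technical step is the translation between the combinatorial sum over graphs and the formal Feynman integral; this is the standard Wick's-theorem identification, and the equality between $F(z)$ and the Stirling remainder should be read as an equality of formal power series in $z$ (equivalently, as equality of asymptotic expansions at $z \to 0^-$), since the integral $Z(z)$ is not absolutely convergent.
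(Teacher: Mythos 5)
Your argument is correct and is essentially the paper's own proof run in the opposite direction: the paper starts from Euler's integral for $\Gamma(n)$, applies Laplace's method (Lemma~\ref{lmm:laplace_method}, proved in the appendix) together with the graph-theoretic identification of the Laplace coefficients (Corollary~\ref{crll:graph_laplace}, which rests on Proposition~\ref{prop:convoluted_graph_sum} and the exponential formula, Lemma~\ref{lmm:exponential_formula}), compares with Stirling's series via uniqueness of asymptotic expansions, and takes a logarithm to pass to connected graphs---which is exactly the content of your Wick expansion of $Z(z)$, the evaluation $Z(z)=\Gamma(s)e^{s}/(s^{s-1/2}\sqrt{2\pi})$ with $s=-1/z$, and the cancellation against Stirling (your propagator $-z$ versus the paper's vertex weights $b_s=-1$ is just the substitution $z\mapsto-z$, i.e.\ $(-1)^{e(\Gamma)}$ versus $(-1)^{v(\Gamma)}$). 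The one step you treat as automatic---that the formal Wick/saddle-point series really is the asymptotic expansion of the convergent integral $Z(z)$ as $z\to 0^-$---is precisely what Lemma~\ref{lmm:laplace_method} and Corollary~\ref{crll:graph_laplace} establish (and the paper proves the Laplace lemma in full because the textbook version is only partially proved), so to make your sketch complete you should invoke these rather than cite Wick's theorem alone. One small correction: for real $z<0$ the integral $Z(z)$ is absolutely convergent; the reason the identity can only be read at the level of asymptotic expansions is that termwise integration of the expanded exponential produces a divergent, purely asymptotic series, not that the integral itself fails to converge.
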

This statement is not new. It follows as a special case from `Penner's model' \cite{penner1986moduli} (see also \cite[Appendix\ D]{Kon92}). 
The sum could be thought of as the integral of $\sigma$ over the space of connected graphs with measure $\mu(\G)=1/|\Aut(\G)|$, whereas integrating its convolutive inverse $\tau(\G)$ over the same space with the same measure gives $\ch_n$ by Corollary~\ref{cor:SV}. 

In Section~\ref{sec:laplace} we will give a proof of Proposition~\ref{prop:bernoulli_graphs_sum} as a special case of Corollary~\ref{crll:graph_laplace}. Here, we can immediately verify it for $n=1$:
\begin{align*}
\frac{\sigma(
{
\begin{tikzpicture}[x=1ex,y=1ex,baseline={([yshift=-.6ex]current bounding box.center)}] \coordinate (vm); \coordinate [left=.7 of vm] (v0); \coordinate [right=.7 of vm] (v1); \draw (v0) circle(.7); \draw (v1) circle(.7); \filldraw (vm) circle (1pt); \end{tikzpicture}%
}
)}{|\Aut(
{
\begin{tikzpicture}[x=1ex,y=1ex,baseline={([yshift=-.6ex]current bounding box.center)}] \coordinate (vm); \coordinate [left=.7 of vm] (v0); \coordinate [right=.7 of vm] (v1); \draw (v0) circle(.7); \draw (v1) circle(.7); \filldraw (vm) circle (1pt); \end{tikzpicture}%
}
)|}
+
\frac{\sigma(
{
\begin{tikzpicture}[x=1ex,y=1ex,baseline={([yshift=-.6ex]current bounding box.center)}] \coordinate (v0); \coordinate [right=1.5 of v0] (v1); \coordinate [left=.7 of v0] (i0); \coordinate [right=.7 of v1] (o0); \draw (v0) -- (v1); \filldraw (v0) circle (1pt); \filldraw (v1) circle (1pt); \draw (i0) circle(.7); \draw (o0) circle(.7); \end{tikzpicture}%
}
)}{|\Aut(
{
\begin{tikzpicture}[x=1ex,y=1ex,baseline={([yshift=-.6ex]current bounding box.center)}] \coordinate (v0); \coordinate [right=1.5 of v0] (v1); \coordinate [left=.7 of v0] (i0); \coordinate [right=.7 of v1] (o0); \draw (v0) -- (v1); \filldraw (v0) circle (1pt); \filldraw (v1) circle (1pt); \draw (i0) circle(.7); \draw (o0) circle(.7); \end{tikzpicture}%
}
)|}
+
\frac{\sigma(
{
\begin{tikzpicture}[x=1ex,y=1ex,baseline={([yshift=-.6ex]current bounding box.center)}] \coordinate (vm); \coordinate [left=1 of vm] (v0); \coordinate [right=1 of vm] (v1); \draw (v0) -- (v1); \draw (vm) circle(1); \filldraw (v0) circle (1pt); \filldraw (v1) circle (1pt); \end{tikzpicture}%
}
)}{|\Aut(
{
\begin{tikzpicture}[x=1ex,y=1ex,baseline={([yshift=-.6ex]current bounding box.center)}] \coordinate (vm); \coordinate [left=1 of vm] (v0); \coordinate [right=1 of vm] (v1); \draw (v0) -- (v1); \draw (vm) circle(1); \filldraw (v0) circle (1pt); \filldraw (v1) circle (1pt); \end{tikzpicture}%
}
)|}
&=
\frac{1}{8} + \frac{-1}{8} + \frac{-1}{12} = 
-\frac{1}{12} = -\frac{B_2}{2}.
\end{align*}

The Bernoulli numbers are classical objects with a long history, and it is well-known that $B_{2n+1}$ vanishes for $n \geq 1$ and that the sign of $B_{2n}$ is $(-1)^{n+1}$ for $n\geq 1$. To analyse similar properties of the numbers $\ch_n$, we will make heavy use of \textit{asymptotic expansions}.  We will go into the details after a short digression about the relation of our methods with perturbative methods used in quantum field theory.

\section{Renormalized topological quantum field theory}\label{sec:tqft}

Our approach to analyzing the numbers $\ch_n$ is in line with an established technique for analyzing topological objects by using \textit{perturbative quantum field theory} or equivalently \textit{Feynman diagram techniques} \cite{bessis1980quantum}. The term \textit{topological quantum field theory} is used for a quantum field theory whose observables are topological invariants \cite{witten1988topological}. See also \cite{kontsevich1993formal,kontsevich1994feynman} for further aspects of this theory and \cite{conant2003theorem} for a more detailed account focused on group cohomology. 

One prominent application of topological quantum field theory is intersection theory in the moduli space of complex curves, as developed by Witten \cite{witten1990two} and Kontsevich \cite{Kon92}.  Penner \cite{penner1986moduli} had already applied perturbative quantum field theory techniques to reprove the result of Harer and Zagier %
on the rational Euler characteristic of the mapping class group. In the course of his study of intersection theory Kontsevich gave a simplified version of Penner's proof \cite[Appendix\ D]{Kon92}.  This simplified proof involves a formula similar to  the one in Proposition~\ref{prop:bernoulli_graphs_sum}. 

We can endow our approach to studying the numbers $\ch_n$ with a quantum field theoretical interpretation, in  a spirit similar to the work of   Penner and Kontsevich.  Here is   a brief, heuristic indication of how this goes.

We start with the statement of Proposition~\ref{prop:Tzx_graph_counting_identity} and immediately apply Proposition~\ref{prop:Tzx_leaves_identity} to obtain the equation
\begin{align*}
    1 &= \sum_{\ell \geq 0} (-z)^\ell (2\ell-1)!! [x^{2\ell}] \exp\left( \frac{e^x-\frac{x^2}{2}-x-1}{z} + \frac{x}{2} + T(ze^{-x}) \right).
\end{align*}
Now flip the sign of $z$ to get
\begin{align}
\label{eqn:Tzx_graph_counting_identity_qft_expl}
    1 &= \sum_{\ell \geq 0} z^\ell (2\ell-1)!! [x^{2\ell}] \exp\left( -\frac{e^x-\frac{x^2}{2}-x-1}{z} + \frac{x}{2} + T(-ze^{-x}) \right).
\end{align}
For the remainder of this section we regard $z$   not as a formal variable, but rather as a positive real number. We then recall the Gaussian integrals 
\begin{align*}
 \frac{1}{\sqrt{2\pi z}} \int_\R x^{2 \ell} e^{-\frac{x^2}{2 z}} dx &= z^\ell (2\ell-1)!! \text{ for all } \ell \geq 0 \\
 \frac{1}{\sqrt{2\pi z}} \int_\R x^{2 \ell+1} e^{-\frac{x^2}{2 z}} dx &=0 \text{ for all } \ell \geq 0.
\end{align*}
Substituting these into eq.\ \eqref{eqn:Tzx_graph_counting_identity_qft_expl} gives
\begin{align*}
    1 &= \sum_{\ell \geq 0} \frac{1}{\sqrt{2\pi z}} \int_\R x^{2 \ell} e^{-\frac{x^2}{2 z}} dx [x^{2\ell}] \exp\left( -\frac{e^x-\frac{x^2}{2}-x-1}{z} + \frac{x}{2} + T(-ze^{-x}) \right).
\end{align*}
This integral is not convergent since we are no longer regarding $z$ as a formal variable, but we will disregard this issue for this heuristic argument. In the same laissez-faire spirit, we ignore convergence issues and interchange summation with integration to obtain
\begin{align}
    \label{eqn:renormalization_condition}
    1 &= \frac{1}{\sqrt{2\pi z}} \int_\R \exp\left( -\frac{e^x-x-1}{z} + \frac{x}{2} + T(-ze^{-x}) \right) dx.
\end{align}
This integral is again not well-defined, as the series $T(z)$ does not converge to a function of  $z$ in any finite domain: it is only a formal power series with a vanishing radius of convergence. However, we can interpret the right hand side of this equation as a `path-integral' of a \textit{zero-dimensional quantum field theory} with the \textit{action} $-(e^x-x-1)$, where the parameter $z$ takes the role of \textit{Planck's constant} $\hbar$. The additional terms in the exponent $\frac{x}{2} + T(-ze^{-x})$ can be interpreted as \textit{counterterms} or \textit{renormalization constants} which \textit{renormalize} the quantum field theory in a generalized sense. In fact, equation~\eqref{eqn:renormalization_condition} can be interpreted as a \textit{renormalization condition} of a quantum field theory. 

In Kontsevich's proof of the Harer-Zagier formula, a topological quantum field theory was constructed whose perturbative expansion encoded the geometric invariants of interest. As we have seen above, our method can also be interpreted as an application of quantum field theory to the analysis of  the invariants $\ch_n$. However, instead of using the coefficients of the perturbative expansion directly, we use the coefficients of the renormalization constants to express the quantities which are of interest. We might therefore say that we are using a \textit{renormalized topological quantum field theory} to encode $\ch_n$. 

This is consistent with the interpretation of $\tau$ as a character on the core Hopf algebra. Connes and Kreimer \cite{connes2000renormalization} showed that the renormalization procedure in quantum field theory can be seen as the solution of a Riemann-Hilbert problem using a Birkhoff decomposition. The Birkhoff decomposition can be formulated elegantly as an inversion in the group of characters of a certain Hopf algebra. In our topological case, which is much simpler than the full physical picture, this interpretation boils down to the brief exposition in Section~\ref{sec:core}. Consult \cite{borinsky2017renormalized} for a general treatment of renormalized zero-dimensional quantum field theory in a Hopf algebraic framework. 

After these expository remarks we  now return to our rigorous treatment of the Euler characteristic of $\Outn$.

\newcommand{\llrrparen}[1]{%
  \left(\mkern-3mu\left(#1\right)\mkern-3mu\right)}
\section{Asymptotic expansions}
\label{sec:asymptotic_expansions}

An often useful approach to studying a generating function such as $T(z)= \sum_{n\geq 1} \ch_n z^n$ is to interpret it as an analytic function in $z$ and then use analytic techniques to study the nature of its coefficients \cite{flajolet2009analytic}. However, in our case this standard approach is doomed to fail, at least if it is applied naively, as the coefficients of $T(z)$  turn out to grow factorially so the power series $T(z)$ has a vanishing radius of convergence.

We will circumvent this problem by using an \textit{asymptotic expansion} of a certain function to describe the coefficients of $T(z)$. In contrast to Taylor expansions of analytic functions, asymptotic expansions are not necessarily convergent in any non-vanishing domain of $\C$.

\subsection{Asymptotic notation}

In this section we fix the notation we use for asymptotic expansions  and prove a basic property that we will use repeatedly.   
We begin by recalling the  {\em big  $\bigO$} and {\em small $\smallO$}   notation.
Let $f, g$ and $h$ be  functions defined on a domain $D$ and let $L$ be a limit point of $D$. 
The notation $f(x)=g(x) + \bigO(h(x))$ means $f-g\in \bigO(h)$, where $\bigO(h)$ is the set of all functions $u$ defined on $D$ such that 
\begin{align*}
    \limsup_{x\to L} \left| \frac{u(x)}{h(x)} \right| <  \infty.
\end{align*}
Similarly, $f(x)=g(x)+\smallO(h(x))$ means $f-g\in \smallO(h),$  where $\smallO(h)$ consists of  all functions $u$ that satisfy $\lim_{x\to L} \frac{u(x)}{h(x)} = 0$. %

An {\it asymptotic scale} on $D$ with respect to a limit $L$ is a sequence of functions $\{\varphi_k\}_{k\geq 0}$ with the property $\varphi_{k+1} \in \smallO(\varphi_k)$ for $k \geq 0$. A common example, for functions with domain $\R$ and limit  $L=\infty$, is $\varphi_k(x)=x^{-k}$.

\begin{definition}
    \label{def:asymptotic_expansion}
    An {\it asymptotic expansion} of a function $f$ defined on $D$ with respect to the limit $L$ and the asymptotic scale $\{\varphi_k\}_{k\geq0}$ is a sequence of coefficients $c_k$ such that
\begin{align*}
    f(x) = \sum_{k=0}^{R-1} c_k\varphi_k(x) + \bigO(\varphi_R(x)) \text{ for all } R \geq 0,
\end{align*}
where the $\bigO$ refers to the limit $x \rightarrow L$. %
We will write this infinite set of $\bigO$ relations as, 
\begin{align*}
    f(x)\sim \sum_{k\geq 0} c_k\varphi_k(x) \text{ as } x \rightarrow L.
\end{align*}
\end{definition}
Asymptotic expansions are widely used in mathematical analysis, the physical sciences and engineering to obtain very accurate approximations to functions.  
A detailed introduction to asymptotic expansions can be found in de Bruijn's book \cite{de1981asymptotic}. A key feature of asymptotic expansions is that, for a given function $f$, limit $L$ and asymptotic scale $\{\varphi_k\}_{k \geq 0}$, the coefficients $c_k$ are unique if they exist. We will make use of this property in the proof of Theorem~\ref{thm:asymptotic_expansion}.

The coefficients of an asymptotic expansion depend on the choice of the asymptotic scale. However, under certain conditions we can translate between asymptotic expansions in different asymptotic scales:
\begin{lemma}
    \label{lmm:scale_change}
    Suppose  $\Phi=\{\varphi_k\}_{k\geq0}$ and $\Psi=\{\psi_m\}_{m\geq0}$ are two  asymptotic scales on a domain $D$ with  respect to the same limit $L$, and suppose  $f$ has an asymptotic expansion in   $\Phi$
\begin{equation}
    \label{eqn:scale_change_original}
    f(x) \sim \sum_{k \geq 0} c_k \varphi_k(x) \text{ as } x\rightarrow L.
\end{equation}
If each $\psi_m$ also has an asymptotic expansion in   $\Phi$
\begin{align}
    \label{eqn:scale_change_scale_relation}
\psi_m(x)\sim \sum_{k\geq m} c_{m,k} \varphi_k(x) \text{ as } x\rightarrow L
\end{align}
with $c_{m,m}\neq 0$, then $f$ has an asymptotic expansion in  $\Psi$
\begin{align*}
   f(x) \sim \sum_{m \geq 0} c'_m \psi_m(x) \text{ as } x\rightarrow L,
\end{align*}
 where the coefficients $c_m'$ are implicitly determined by the infinite triangular equation system $c_k = \sum_{m= 0}^{k} c_m' c_{m,k}$ for all $k\geq 0$.
\end{lemma}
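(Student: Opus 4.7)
The plan is to first solve the triangular system for the coefficients $c'_m$, and then verify by truncation that $\sum_{m\geq0} c'_m \psi_m$ is indeed an asymptotic expansion of $f$ in the scale $\Psi$.

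First I would observe that the infinite triangular system $c_k = \sum_{m=0}^k c'_m c_{m,k}$ uniquely determines the $c'_m$ by back-substitution: the $k$-th equation reads $c_k = c'_k c_{k,k} + \sum_{m=0}^{k-1} c'_m c_{m,k}$, and since $c_{k,k}\neq 0$ by hypothesis, this expresses $c'_k$ uniquely in terms of $c_k$ and the previously determined $c'_0,\ldots,c'_{k-1}$.

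Next, to verify the asymptotic expansion in $\Psi$, fix $R\geq 0$ and choose any $N\geq R$. Truncating the expansion \eqref{eqn:scale_change_original} of $f$ and each expansion \eqref{eqn:scale_change_scale_relation} of $\psi_m$ for $m<R$ at order $N$ and collecting terms gives
\begin{equation*}
\sum_{m=0}^{R-1} c'_m \psi_m(x) \;=\; \sum_{k=0}^{N-1} a_k \varphi_k(x) + \bigO(\varphi_N(x)), \qquad a_k := \sum_{m=0}^{\min(k,R-1)} c'_m c_{m,k}.
\end{equation*}
For $k<R$ we have $\min(k,R-1)=k$, so by the defining triangular system $a_k=c_k$. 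Subtracting from $f(x)=\sum_{k=0}^{N-1} c_k\varphi_k(x)+\bigO(\varphi_N(x))$ therefore yields
\begin{equation*}
f(x) - \sum_{m=0}^{R-1} c'_m \psi_m(x) \;=\; \sum_{k=R}^{N-1} (c_k - a_k)\varphi_k(x) + \bigO(\varphi_N(x)).
\end{equation*}

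Finally I would translate the right-hand side into the $\Psi$-scale. Since $c_{R,R}\neq 0$, the relation $\psi_R(x)=c_{R,R}\varphi_R(x)+\bigO(\varphi_{R+1}(x))$ shows that $\varphi_R$ and $\psi_R$ have the same asymptotic order as $x\to L$; hence $\varphi_k(x)=\bigO(\psi_R(x))$ for all $k\geq R$, so the displayed remainder is $\bigO(\psi_R(x))$. Since $R$ was arbitrary, this proves $f(x)\sim \sum_{m\geq 0} c'_m \psi_m(x)$. The only step with any real content is this last one, where the assumption $c_{m,m}\neq 0$ (applied at $m=R$) is exactly what allows the $\varphi$-scale error term to be absorbed into a $\psi$-scale error term.
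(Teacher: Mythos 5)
Your proposal is correct and follows essentially the same route as the paper's proof: truncate the $\Phi$-expansions of the $\psi_m$, multiply by $c'_m$, sum and swap the order of summation, use the triangular system to match the coefficients $c_k$, and finally convert the $\bigO(\varphi_R)$ remainder into $\bigO(\psi_R)$ via $c_{R,R}\neq 0$ and $\varphi_{R+1}\in\smallO(\varphi_R)$. The only cosmetic differences are your extra truncation parameter $N\geq R$ (which is redundant, since $N=R$ already suffices) and your explicit remark that the triangular system is uniquely solvable, which the paper leaves implicit.
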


\begin{proof}
By the definition of an asymptotic expansion we have
 \begin{align*}
        \psi_m- \sum^{R-1}_{k=m} c_{m,k} \varphi_k\in  \bigO(\varphi_R) \text{ for all } R \geq m \geq 0.
    \end{align*}
We can multiply a function in $\bigO(h)$  by a constant or add a finite number of functions in $\bigO(h)$ without changing the $\bigO$ class. Thus multiplying by $c_m'$ and then adding from $m=0$ to $R-1$ gives
 \begin{align*}
        \sum_{m=0}^{R-1} c'_m \psi_m - \sum_{m=0}^{R-1}\sum^{R-1}_{k=m} c'_mc_{m,k} \varphi_k \in  \bigO(\varphi_R) \text{ for all } R   \geq 0.
 \end{align*}
 Changing the order of summation and using the definition of the constants $c'_m$ gives 
  \begin{align*}
        \sum_{m=0}^{R-1} c'_m \psi_m- \sum_{k=0}^{R-1}\sum_{m=0}^{k} c'_mc_{m,k} \varphi_k =  \sum_{m=0}^{R-1} c'_m \psi_m - \sum_{k=0}^{R-1}c_k \varphi_k \in  \bigO(\varphi_R) \text{ for all } R   \geq 0.
 \end{align*}
By eq.\ \eqref{eqn:scale_change_original} we have  $f- \sum_{k=0}^{R-1} c_k\varphi_k \in \bigO(\varphi_R)$, so  combining this with the above gives $$f- \sum_{m=0}^{R-1} c_m'\psi_m \in \bigO(\varphi_R) \text{ for all } R\geq 0.$$
It remains only to check that $\bigO(\varphi_R)=\bigO(\psi_R)$.  This follows from eq.\ \eqref{eqn:scale_change_scale_relation}, which   implies $\psi_R=c_{R,R}\varphi_R +  \bigO(\varphi_{R+1}),$ together with the assumption that $c_{R,R}\neq 0$ and the fact that $\varphi_{R+1}\in \smallO(\varphi_R)$. 
 \end{proof}

 In this paper the domain of our functions will mostly be the natural numbers, i.e.\ our functions are sequences $f\colon\N \rightarrow \R$, and the limit will almost always be $\infty$, but the asymptotic scale will vary.   

\subsection{Stirling's approximation}
Arguably, one of the most studied asymptotic expansions is \textit{Stirling's approximation}. This is an asymptotic expansion of the  gamma  function 
\begin{align}
    \label{eqn:stirling_approximation}
    \Gamma(n) \sim \sum_{k \geq 0} \widehat b_k \sqrt{2\pi} e^{-n}n^{n-\frac12-k} \text{ as } n \rightarrow \infty,
\end{align}
where $\widehat b_k$ is the   coefficient of $z^k$ in $\exp\left( \sum_{k=1}^\infty \frac{B_{k+1}}{k(k+1)} z^k \right)$. See for instance \cite[Sec.\ 3.10]{de1981asymptotic} for a proof.
Stirling's approximation is used extensively as a tool for approximating the value of $\Gamma(n)$ for large $n$. %
We, however, will view eq.\ \eqref{eqn:stirling_approximation} as an asymptotic expansion of  $\Gamma(n)$ in the asymptotic scale $\{\sqrt{2 \pi} e^{-n} n^{n-\frac12-k}\}_{k\geq0}$ and use it merely as a tool to encode and manipulate the coefficients $\widehat b_k$.  

Recall that  the gamma function satisfies $\Gamma(z+1) = z\Gamma(z);$ this ensures that the sequence of functions $\{\Gamma( n -k + \frac12 )\}_{k\geq0}$ forms an asymptotic scale in the limit $n \rightarrow \infty$. The statement of Theorem~\ref{thm:asymptotic_expansion} gives an asymptotic expansion of $f(n)= \sqrt{2 \pi} e^{-n} n^{n}$ in this scale, whose    coefficients coincide with those of the formal power series $\exp(T(z))$; we can think of this as a kind of ``inverted'' Stirling's approximation. 
 
Although there is a large and growing literature on Stirling's approximation (see \cite{borwein2018gamma} for a recent survey), such an asymptotic expansion of $\sqrt{2 \pi} e^{-n} n^{n}$ does not seem to have been studied previously. 
This type of `inverted Stirling's approximation' might also be relevant for other applications: many problems dictate or suggest an inherent asymptotic scale. For instance, it might be natural to work in the asymptotic scale $\{(2(n-k)-1)!!\}_{k\geq 0}$, where $(2(n-k)-1)!! = 2^{n-k} \Gamma(n-k+\frac12)/\sqrt{\pi}$, for counting problems whose solution involves double factorials. Moreover, power series with coefficients which have an asymptotic expansion in the scale $\{\Gamma(n-k+\beta)\}_{k\geq 0}$ with $\beta \in \R$ have a rich algebraic structure; for instance they are closed under multiplication and functional composition \cite{borinsky2018generating}.

To establish the asymptotic expansion in Theorem~\ref{thm:asymptotic_expansion} we will start with a trivial asymptotic expansion for the constant function $1$ in the scale $\{n^{-k}\}_{k\geq0}$, then use Lemma~\ref{lmm:scale_change}  to change to the scale $\{\psi_m\}_{m\geq0}$, where 
\begin{align} \label{eqn:psi_defn}
\psi_m(n) = \frac{\Gamma( n -m +\frac12 )}{\sqrt{2 \pi} e^{-n} n^{n}}.
\end{align}
In order to apply the lemma, we need to find asymptotic expansions for the functions $
\psi_m(n)$. We do this using the following variant of Stirling's approximation. 
\begin{proposition}
    \label{prop:graph_stirling}
    Let $\Psi=\{\psi_m\}_{m\geq0}$ be the asymptotic scale with domain $\N$ and limit $\infty$ defined in eq.\ \eqref{eqn:psi_defn}.
    Then each $\psi_m$ has an asymptotic expansion in the asymptotic scale $\{n^{-k}\}_{k\geq0}$ given by
\begin{align*}
\psi_m(n)
\sim \sum_{k \geq m} c_{m,k} n^{-k}
\text{ as } n \rightarrow \infty,
\end{align*}
where $c_{m,k}$ is the coefficient of $z^k$ in the formal power series 
 \begin{align}
    \label{eqn:graph_stirling_asymp_psi}
z^m
\sum_{\ell \geq 0}
z^{\ell} (2\ell-1)!!
[x^{2\ell}]
e^{-\frac{1}{z}\left(e^x - \frac{x^2}{2} - x - 1\right) + x \left( \frac12 - m \right)}.
\end{align}
\end{proposition}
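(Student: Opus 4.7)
The plan is as follows. First I would derive the integral representation
$$\psi_m(n) = \frac{n^{1/2-m}}{\sqrt{2\pi}} \int_{-\infty}^{\infty} e^{-n(e^x - x - 1) + x(1/2 - m)} \, dx$$
by starting from $\Gamma(n-m+\frac{1}{2}) = \int_0^\infty t^{n-m-1/2} e^{-t} \, dt$ and applying the substitution $t = n e^x$. Writing this in terms of $z = 1/n$ gives $\psi_m(n) = z^m\, I(z)$, where
$$I(z) = \frac{1}{\sqrt{2\pi z}} \int_{-\infty}^{\infty} e^{-(e^x - x - 1)/z + x(1/2-m)} \, dx.$$
The $z^m$ prefactor here is precisely what makes the eventual expansion of $\psi_m$ begin at $n^{-m}$, matching the lower bound $k \geq m$ in the statement.

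Next I would apply Laplace's method to $I(z)$ in the limit $z \to 0^+$. The phase $f(x) = e^x - x - 1$ is non-negative, vanishes only at the saddle $x=0$, and satisfies $f''(0)=1$, so this is a textbook setup. One truncates to a fixed neighborhood $|x| \leq \delta$, absorbing the tail into a $\bigO(e^{-c/z})$ error that is negligible against every $z^k$; within the neighborhood one splits the exponent as $-x^2/(2z) - (f(x)-x^2/2)/z$ and rescales $x = \sqrt{z}\, u$. Since $f(x) - x^2/2 = \bigO(x^3)$, the non-Gaussian factor $e^{-(f(x) - x^2/2)/z + x(1/2-m)}$ becomes a formal power series in $\sqrt{z}$ whose coefficients are polynomials in $u$; integrating term by term against $e^{-u^2/2}/\sqrt{2\pi}$ and invoking parity to annihilate the odd powers yields an honest asymptotic expansion of $I(z)$ in integer powers of $z$.

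To identify the coefficients, I would use the Gaussian moment formula $\frac{1}{\sqrt{2\pi z}} \int x^{2\ell} e^{-x^2/(2z)} \, dx = z^\ell (2\ell-1)!!$. This shows that the $z^k$-coefficient of $I(z)$ agrees with the $z^k$-coefficient of the formal expression $\sum_{\ell \geq 0} z^\ell (2\ell-1)!! [x^{2\ell}] e^{-(e^x - x^2/2 - x - 1)/z + x(1/2-m)}$. Multiplying by $z^m$ recovers the formula for $c_{m,k}$ in the statement.

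The main technical obstacle is verifying that this formal series is a well-defined element of $\Q[[z]]$ and that its coefficients really coincide with the asymptotic coefficients produced by Laplace's method. The key observation is that $e^x - x - 1 - x^2/2$ begins at $x^3/6$, so expanding $e^{-(e^x - x^2/2 - x - 1)/z + x(1/2-m)}$ in $x$ produces at $x^{2\ell}$ a polynomial in $1/z$ of degree at most $\lfloor 2\ell/3 \rfloor$. After multiplication by $z^\ell$ only non-negative powers of $z$ survive, so each $z^k$-coefficient of the sum over $\ell$ receives contributions from only finitely many $\ell$. This bookkeeping is what makes the formal Gaussian-moment manipulation rigorous and identifies it with the Laplace expansion.
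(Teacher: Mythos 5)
Your proposal is correct and follows essentially the same route as the paper: the substitution $u=ne^x$ in Euler's integral to get $\psi_m(n)=n^{-m}\sqrt{n/(2\pi)}\int_\R e^{-n(e^x-x-1)+x(\frac12-m)}dx$, followed by Laplace's method with the Gaussian moments $z^\ell(2\ell-1)!!$ identifying the coefficients, and the $n^{-m}$ prefactor accounting for the shift $k\geq m$. The only difference is presentational: the paper invokes its general Laplace lemma (Lemma~\ref{lmm:laplace_method}, proved in the appendix) with $f(x)=e^{x(\frac12-m)}$ and $g(x)=-(e^x-x-1)$, whereas you sketch the truncation-and-rescaling argument inline; your bookkeeping of why the formal series lies in $\Q[[z]]$ matches the role that lemma plays.
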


We will prove this proposition using \textit{Laplace's method}, which serves as a connection between graphical enumeration and asymptotic expansions. We will introduce this method in the next section and therefore postpone the proof of Proposition~\ref{prop:graph_stirling} until then. 

Assuming Proposition~\ref{prop:graph_stirling} we are now ready to  prove Theorem~\ref{thm:asymptotic_expansion}.

\begin{repthmx}{thm:asymptotic_expansion}
    The function $\sqrt{2 \pi}e^{-n} n^n$ has the following asymp\-totic expansion in the asymptotic scale $\{(-1)^k  \Gamma( n + \frac12 - k ) \}_{k\geq0}$,
\begin{align*}
    \sqrt{2 \pi}e^{-n} n^n &\sim \sum_{ k\geq 0 } \Ch_k (-1)^k  \Gamma\left( n + \frac12 - k \right) \text{ as } n\rightarrow \infty,
\end{align*}
where $\Ch_k$ is the coefficient of $z^k$ in the formal power series $\exp\left( \sum_{n\geq 1} \chi( \Out (F_{n+1}) ) z^n \right)$.
\end{repthmx}

\begin{proof}
    The constant function $f(n)\equiv 1$ has a trivial asymp\-totic expansion in the asymptotic scale $\{n^{-k}\}_{k\geq0}$, namely
    \begin{align*}
        1 &\sim \sum_{k \geq 0} c_k n^{-k} \text{ as } n \rightarrow \infty,
    \end{align*}
    with   coefficients $c_0 = 1$ and $c_k=0$ for all $k\geq 1$. Using Lemma~\ref{lmm:scale_change} and Proposition~\ref{prop:graph_stirling} we can change the asymptotic scale from $\{n^{-k}\}_{k\geq0}$ to the scale $\Psi$ as defined in Proposition~\ref{prop:graph_stirling}, giving
    \begin{align*}
        1 &\sim \sum_{m \geq 0} c_m' \psi_m(n) \text{ as } n \rightarrow \infty,
    \end{align*}
where the coefficients $c_m'$ are uniquely determined by the triangular equation system 
\begin{align}
\label{eqn:triangular_system} 
c_k = \sum_{m = 0}^{k} c_m' c_{m,k} \text{ for all } k \geq 0
\end{align}
and the coefficients $c_{m,k}$ are those defined  in  the statement of Proposition~\ref{prop:graph_stirling}. Namely, 
$c_{m,k}$ is the coefficient of $z^k$ in the formal power series given in eq.\ \eqref{eqn:graph_stirling_asymp_psi}.  It follows from this power series representation that $c_{m,m} \neq 0$ for all $m \geq 0$, which justifies our application of Lemma~\ref{lmm:scale_change} and guarantees that the linear equation system~\eqref{eqn:triangular_system} can be uniquely solved for the coefficients $c_m'$. 

By definition of $\psi_m$ in eq.\ \eqref{eqn:psi_defn}, this asymptotic expansion becomes
 \begin{align*}
        1 &\sim \sum_{m \geq 0}  c'_m\frac{\Gamma( n -m +\frac12 )}{\sqrt{2 \pi} e^{-n} n^{n}} \text{ as } n \rightarrow \infty.
    \end{align*}
  Multiplying both sides by $\sqrt{2 \pi} e^{-n} n^{n}$ gives 
  \begin{align*}
        \sqrt{2 \pi} e^{-n} n^{n} &\sim \sum_{m \geq 0}  c'_m\Gamma( n -m +\frac12 ) \text{ as } n \rightarrow \infty.
    \end{align*}
 It remains to show that $c_m'=(-1)^m\Ch_m$. 
From Proposition~\ref{prop:Tzx_graph_counting_identity} 
and Proposition~\ref{prop:Tzx_leaves_identity} we have
    \begin{align*}
1 &= \sum_{\ell \geq 0} (-z)^\ell (2\ell-1)!! [x^{2\ell}] \exp\left( T(z,x) \right)\\
&= 
\sum_{\ell \geq 0} (-z)^\ell (2\ell-1)!! [x^{2\ell}] \exp\left( \frac{e^x-\frac{x^2}{2}-x-1}{z} + \frac{x}{2} + T(ze^{-x}) \right)\\
          &=\sum_{\ell \geq 0} z^{\ell} (2\ell-1)!! [x^{2\ell}] e^{-\frac{1}{z} \left( e^x-\frac{x^2}{2} - x - 1 \right) + \frac12 x} \exp\left( T(-ze^{-x}) \right).
    \end{align*}
 Expanding the second exponential in $z$ gives,
    \begin{align*}
        1 &= \sum_{\ell \geq 0} z^{\ell} (2\ell-1)!! [x^{2\ell}] e^{-\frac{1}{z} \left( e^x-\frac{x^2}{2} - x - 1 \right) + \frac12 x } \sum_{m \geq 0} z^{m} e^{-mx}(-1)^m\Ch_m \\
         &=  \sum_{m \geq 0}(-1)^m\Ch_m  z^{m} \sum_{\ell \geq 0} z^{\ell} (2\ell-1)!! [x^{2\ell}] e^{-\frac{1}{z} \left( e^x-\frac{x^2}{2} - x - 1 \right) + \left(\frac12-m\right) x},
    \end{align*}
where $\Ch_k$ is the coefficient of $z^k$ in the formal power series $\exp\left( \sum_{n\geq 1} \ch_n z^n \right)$.
By eq.\ \eqref{eqn:graph_stirling_asymp_psi}  this is
$$1 =  \sum_{m \geq 0} (-1)^m \Ch_m \sum_{k\geq m} c_{m,k} z^k=\sum_{k\geq 0}\sum_{m\leq k} (-1)^m \Ch_m c_{m,k} z^k.$$
 Because $[z^k] 1 = c_k$, we can also write this as $c_k =  \sum_{m \leq k} (-1)^m \Ch_m c_{m,k}$ for all $k\geq 0$. Therefore, we constructed a solution of the triangular equation system in \eqref{eqn:triangular_system}. %
Because the coefficients $c_m'$ are unique, it follows that $c_m' = (-1)^m \Ch_m$ as claimed.
\end{proof}

\begin{remark} The coefficients $c_{m,k}$ of the asymptotic expansion of the functions $\psi_{m}$ given in Proposition~\ref{prop:graph_stirling} (eq.\ \eqref{eqn:graph_stirling_asymp_psi}) can also be written in terms of Bernoulli numbers if we use the conventional expression of Stirling's approximation given in  eq.\ \eqref{eqn:stirling_approximation}. Slightly abusing the $\sim$ notation me may  write the asymptotic expansion for $\psi_m(n)$ as
\begin{align*}
\frac{\Gamma( n -m +\frac12 )}{\sqrt{2 \pi} e^{-n} n^{n}} &\sim 
\frac{\sqrt{2\pi} \left(n-m+\frac12\right)^{n-m} e^{-n+m-\frac12}\exp\left( \sum_{k\geq1} \frac{B_{k+1}}{k(k+1)} \left(n-m+\frac12\right)^{-k} \right)}{\sqrt{2 \pi} e^{-n} n^{n}}
\\
&=
n^{-m}\left(\frac{n-m+\frac12}{n}\right)^{n-m} e^{m - \frac12} \exp\left(\sum_{k\geq1} \frac{B_{k+1}}{k(k+1)} \left(n-m+\frac12\right)^{-k} \right).
\end{align*}
Writing $z=\frac{1}{n}$ this becomes
$$z^m\left(\left({1-z\left(m-\frac12\right)}\right)^{\frac{1}{z}-m}e^{m - \frac12} \exp\left(\sum_{k\geq1} \frac{B_{k+1}}{k(k+1)} \left(\frac{1}{z}-m+\frac12\right)^{-k} \right)\right).
$$
Since the coefficients of the asymptotic expansion for $\psi_m(n)$ are given by the above power series as well as  by the power series in eq.\ \eqref{eqn:graph_stirling_asymp_psi}, the series are equal, giving the following identity for Bernoulli numbers, for all $m\geq 0$. 
\begin{align*}
&\sum_{\ell \geq 0}
z^{\ell} (2\ell-1)!!
[x^{2\ell}]
\exp\left(-\frac{1}{z}\left(e^x - \frac{x^2}{2}  - x - 1\right) + x \left( \frac12 - m \right) \right) \\
&=\exp\left( 
 \left(m-\frac{1}{z}\right) \log \frac{1}{1-z\left(m-\frac12\right)} + m - \frac12 +\sum_{k\geq1} \frac{B_{k+1}}{k(k+1)} z^k \left(1-z\left(m-\frac12\right)\right)^{-k} \right) 
 \\
&=\exp\left( 
\sum_{k\geq1} \frac{z^k}{k(k+1)} \left( \left(m-\frac12 k \right) \left(m-\frac12\right)^k + \frac{B_{k+1}}{\left(1-z\left(m-\frac12\right)\right)^{k}} \right) \right)
\end{align*}

This identity actually holds for all $m\in \R$. However, it is unclear how to prove such an identity without asymptotic techniques. The special case $m=\frac12$ lies at the heart of the proof of Proposition~\ref{prop:bernoulli_graphs_sum}. De Bruijn also discusses this case using Laplace's method and writes that the identity is `by no means easy to verify directly' \cite[Sec.\ 4.5]{de1981asymptotic}.
\end{remark}
    \subsection{Laplace's method: A bridge between graphical enumeration and asymptotics}
\label{sec:laplace}
    A common source of asymptotic expansions is \textit{Laplace's method}. Laplace's method is, as one might guess from the name,  quite an old technique. It is usually used to extract asymptotic information from a complicated integral without evaluating it in full generality. We will use Laplace's method in the opposite way, as we are going analyze the properties of a complicated number sequence by associating it with a relatively simple integral. This way, the method will serve as a bridge between graphical enumeration as described in Section~\ref{sec:graphical_enumeration} and the analytic world of integrals and their asymptotic expansions.

\begin{lemma}[Laplace's method]
    \label{lmm:laplace_method}
Let $f$ and $g$ be real-valued functions  on a domain  $D \subset \R$  with $0$ in its interior.  Suppose both $f$ and $g$ are analytic in a neighborhood of $0$, that $g(0)=g'(0)=0$, $g''(0)=-1,$ and $0$ is the unique global supremum of $g$. Finally, assume that the integral
    \begin{align*}
 \int_D | f(x) | e^{n g(x)} dx
    \end{align*}
    exists for sufficiently large $n$.
  Then the sequence $I(n)$ given by the integral formula
\begin{align}
    \label{eqn:integral_I}
    I(n) = \sqrt{\frac{n}{2 \pi}} \int_D f(x) e^{n g(x)} dx
\end{align}
admits an asymptotic expansion with asymptotic scale $\{n^{-k}\}_{k\geq0}$, 
\begin{align}
    \label{eqn:laplace_expansion}
    I(n) \sim \sum_{k \geq 0} c_k n^{-k} \text{ as } n \rightarrow \infty,
\end{align}
where $c_k$ is the coefficient of $z^k$ in the formal power series,
\begin{align}
\label{eqn:laplace_coeffs}
 \sum_{\ell \geq 0} z^\ell (2\ell-1)!! [x^{2\ell}] f(x) e^{\frac{1}{z} \left( g(x) + \frac{x^2}{2} \right) }.
\end{align}
\end{lemma}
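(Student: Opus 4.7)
The plan is the classical Laplace method, with care to identify the coefficients in terms of the stated generating function.

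First, localize. Since $0$ is the unique global supremum of $g$ and $g(0)=0$, for any sufficiently small $\delta>0$ with $[-\delta,\delta]\subset D$ there exists $\epsilon>0$ such that $g(x)\le-\epsilon$ on $D\setminus[-\delta,\delta]$. Splitting $I(n)$ accordingly, the integral over the complement of $[-\delta,\delta]$ is bounded by $\sqrt{n/(2\pi)}\,e^{-(n-1)\epsilon}\int_D|f(x)|e^{g(x)}\,dx$, which decays faster than any power of $n^{-1}$ and is therefore absorbed into every $\bigO(n^{-R})$ error. Hence for the purposes of the asymptotic expansion we may replace $D$ by $[-\delta,\delta]$.

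Second, rescale via $x=t/\sqrt n$ and use $g(x)=-x^2/2+h(x)$ with $h(x)=\bigO(x^3)$ to rewrite the localized integral as
\[
\frac{1}{\sqrt{2\pi}}\int_{-\delta\sqrt n}^{\delta\sqrt n} f(t/\sqrt n)\,e^{-t^2/2}\,e^{n h(t/\sqrt n)}\,dt.
\]
Formally expanding $f(t/\sqrt n)=\sum_{j}f_j t^j n^{-j/2}$ and $n h(t/\sqrt n)=\sum_{j\ge 3}h_j t^j n^{-(j-2)/2}$ and then the exponential gives a double series in $n^{-1/2}$ with polynomial-in-$t$ coefficients. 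Truncating at order $n^{-R}$, the remainder must be controlled uniformly; the standard device is to split the integration interval into a central zone $|t|\le n^{\alpha}$ for a small $\alpha>0$, where Taylor remainders give a bound of the form $\bigO(n^{-R}\,p(t)\,e^{-t^2/4})$ for some polynomial $p$, and an outer zone where the prefactor $e^{-t^2/2}$ already provides super-polynomial decay. Integrating the truncated series against $e^{-t^2/2}/\sqrt{2\pi}$ on $\R$ (extending the range at the cost of an exponentially small error) and using the Gaussian moments $(2\pi)^{-1/2}\int_\R t^{2\ell}e^{-t^2/2}\,dt=(2\ell-1)!!$ together with the vanishing of odd moments yields $I(n)=\sum_{k<R}c_k n^{-k}+\bigO(n^{-R})$.

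Finally, identify the coefficients. Because $[t^{2\ell}]\bigl(f(t/\sqrt n)e^{nh(t/\sqrt n)}\bigr)=n^{-\ell}[x^{2\ell}]\bigl(f(x)e^{nh(x)}\bigr)$, setting $z=1/n$ shows that the $n^{-k}$ coefficient obtained from the Gaussian-integrated series is exactly $[z^k]\sum_{\ell\ge 0}z^\ell(2\ell-1)!![x^{2\ell}]f(x)e^{(g(x)+x^2/2)/z}$, as required. The main technical obstacle is the uniform remainder control in the second step: a naive Taylor estimate on $[-\delta,\delta]$ degenerates once $|t|$ approaches $\delta\sqrt n$, and the two-zone split is exactly the trick needed to retain uniform bounds; this is classical, and for the quantitative estimates I would cite \cite[Ch.\ 4]{de1981asymptotic} rather than reproduce them.
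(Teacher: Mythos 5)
Your proposal is correct and follows essentially the same route as the paper's appendix proof: localization near the unique maximum, reduction to a Gaussian kernel times an analytic factor, termwise integration via the Gaussian moments $(2\ell-1)!!$, and restoration of the full integration range up to exponentially small errors --- the only difference is bookkeeping (you rescale $x=t/\sqrt{n}$ inside a fixed window and split into central/outer zones, while the paper works unrescaled on the shrinking window $|x|\le n^{-5/12}$ and bounds the truncation of $f(x)e^{n(g(x)+x^2/2)}$ uniformly there). One small repair: since integrability of $|f|e^{ng}$ is only assumed for sufficiently large $n$, your tail estimate should read $e^{-(n-n_0)\epsilon}\int_D|f(x)|e^{n_0 g(x)}\,dx$ for a suitable fixed $n_0$ rather than $e^{-(n-1)\epsilon}\int_D|f(x)|e^{g(x)}\,dx$, exactly as in the paper's eq.\ \eqref{eqn:laplace_epsilon_bound_I}.
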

A quite similar statement is given in \cite[Thm. B7]{flajolet2009analytic}. Unfortunately, only a partial proof is given there. For the convenience of the reader we  provide a proof in the appendix. The argument revolves around approximating the integral in eq.\ \eqref{eqn:integral_I} with a Gaussian integral. It closely follows the arguments in \cite[Sec.\ 4.4]{de1981asymptotic} and \cite[Thm. B7]{flajolet2009analytic}.

We wrote the coefficients of the asymptotic expansion in eq.\ \eqref{eqn:laplace_coeffs}    suggestively to illustrate the close relationship of asymptotic expansions which come from Laplace's method 
and generating functions of graphs such as the one in Proposition~\ref{prop:convoluted_graph_sum}. We will use this relationship in the following Corollary, which we will need to give the relation between graphs and the zeta function stated in Proposition~\ref{prop:bernoulli_graphs_sum}.    
\begin{corollary}
    \label{crll:graph_laplace}
   Let  $f$ be the constant function $f(x)\equiv 1$, and assume $g$ is analytic near $0$ with  Taylor series $$g(x)=-\frac{x^2}{2} + \sum_{s\geq 3} x^s \frac{b_s}{s!}.$$
       Then  for all $k\geq 0$ the coefficients $c_k$ of the asymptotic expansion in eq.\ \eqref{eqn:laplace_coeffs} can be written as a weighted sum over graphs,
\begin{align}
    c_k = \sum_{ \substack{ \G \in \GG_0\\ |\G| = k} }\frac{ \prod_{v \in V(\Gamma)} b_{|v|} }{|\Aut \G|},
\end{align}
where $|v|$ is the \textit{valence} of the vertex $v$. \end{corollary}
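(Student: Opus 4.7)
The plan is to substitute $f\equiv 1$ into the power series \eqref{eqn:laplace_coeffs} from Lemma~\ref{lmm:laplace_method}, expand the exponential combinatorially, and then use a labeled-graph counting argument in the spirit of Proposition~\ref{prop:convoluted_graph_sum} to rewrite the result as a weighted sum over admissible graphs.

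First, since $g(x)+x^2/2=\sum_{s\geq 3}b_s x^s/s!$ by assumption, I expand
\[
\exp\left(\frac{1}{z}\sum_{s\geq 3}\frac{b_s x^s}{s!}\right)=\sum_{v\geq 0}\frac{z^{-v}}{v!}\left(\sum_{s\geq 3}\frac{b_s x^s}{s!}\right)^v.
\]
Extracting the coefficient of $x^{2\ell}$, multiplying by $z^\ell(2\ell-1)!!$, and then reading off the coefficient of $z^k$ yields
\[
c_k=\sum_{\substack{v,\ell\geq 0\\\ell-v=k}}\frac{(2\ell-1)!!}{v!}\sum_{\substack{(s_1,\ldots,s_v)\in\{3,4,\ldots\}^v\\ s_1+\cdots+s_v=2\ell}}\prod_{i=1}^v\frac{b_{s_i}}{s_i!}.
\]

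Second, I interpret the inner sum via labeled graph counting. Given a valence sequence $(s_1,\ldots,s_v)$ with $\sum s_i=2\ell$, a \emph{fully labeled graph} consists of labeled vertices $1,\ldots,v$ where vertex $i$ has its $s_i$ half-edges labeled $1,\ldots,s_i$, together with a fixed-point-free involution pairing the $2\ell$ half-edges into $\ell$ edges; there are $(2\ell-1)!!$ such involutions. On the other hand, for each isomorphism class $\G\in\GG_0$ with $v(\G)=v$ and $e(\G)=\ell$, the orbit-stabilizer theorem applied to the free action of $\Aut(\G)$ on the set of labelings shows that the number of fully labeled graphs realizing $\G$ is $v!\prod_{w\in V(\G)}|w|!/|\Aut(\G)|$. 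Weighting each fully labeled graph by $\prod_i b_{s_i}/(v!\prod_i s_i!)$ and equating the two enumerations gives
\[
\frac{1}{v!}\sum_{(s_1,\ldots,s_v)}(2\ell-1)!!\prod_{i=1}^v\frac{b_{s_i}}{s_i!}=\sum_{\substack{\G\in\GG_0\\ v(\G)=v,\,e(\G)=\ell}}\frac{\prod_{w\in V(\G)}b_{|w|}}{|\Aut(\G)|}.
\]

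Finally, summing over all $v,\ell$ with $\ell-v=k=|\G|$ and substituting into the expression for $c_k$ produces the claimed graphical formula. The only step requiring genuine care is the orbit-stabilizer bookkeeping in the second paragraph, matching the prefactors $v!$ and $\prod_i s_i!$ against $|\Aut(\G)|$; this is the same pattern as in the proof of Proposition~\ref{prop:convoluted_graph_sum}, and in fact simpler because no subgraphs appear, so the main obstacle is purely notational.
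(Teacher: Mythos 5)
Your proof is correct, but it is organized differently from the paper's. The paper obtains the corollary as a quick specialization of machinery already in place: it applies Proposition~\ref{prop:convoluted_graph_sum} together with the exponential formula (Lemma~\ref{lmm:exponential_formula}) to the multiplicative function $\phi$ that vanishes on any graph containing an edge and equals $\prod_{v\in V(\G)}\bigl(z^{-1}b_{|v|}\bigr)$ on edgeless graphs; the two key observations are that every graph has exactly one edgeless subgraph and that the connected edgeless graphs are precisely the star graphs $R_{0,s}$ with $\Aut(R_{0,s})=\Sigma_s$, which converts the exponent into $\frac1z\sum_{s\geq3}b_s\frac{x^s}{s!}$, and then one sets $w=z$. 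You instead expand the exponential in eq.~\eqref{eqn:laplace_coeffs} directly and perform the Wick-type count of perfect matchings on labeled half-edges from scratch. Your bookkeeping is right: the action of $\Aut(\G)$ on your vertex-plus-half-edge labelings is free (it is a different labeling scheme from the paper's edge/vertex/leaf labelings, but freeness holds for the same reason), so the orbit--stabilizer count $v!\prod_{w\in V(\G)}|w|!/|\Aut(\G)|$ is correct, every fixed-point-free involution on half-edges of valence at least $3$ does produce an element of $\GG_0$, and for fixed $k$ the constraint $s_i\geq 3$ forces $v\leq 2k$, so your double sum over $(v,\ell)$ with $\ell-v=k$ is finite, as needed for the coefficient extraction to make sense. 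The two arguments rest on the same counting principle---the paper's Proposition~\ref{prop:convoluted_graph_sum} is itself proved by exactly this kind of labeled enumeration---so what your version buys is a self-contained, classical Feynman-diagram derivation, while the paper's route is shorter given its earlier results and makes explicit the structural role of star graphs and of the unique edgeless subgraph.
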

\begin{proof}
    Let $\phi: \GG \rightarrow \R\llrrparen{z}$ be the function from  the set of graphs to the space of Laurent series in $z$ defined by setting  $\phi(\G) = 0$ if $\G$ contains an edge and $\phi(\G) = \prod_{v \in V(\G)} \left(z^{-1}b_{|v|}\right)$ if $\G$ has no edges. There are only finitely many graphs with $2\ell$ leaves which have no edges, and the function $\phi$ is multiplicative on the disjoint union of graphs, so we may apply Proposition~\ref{prop:convoluted_graph_sum} and Lemma~\ref{lmm:exponential_formula} to get
\begin{align*}
    \sum_{\G \in \GG_0}\frac{w^{e(\G)} \prod_{v \in V(\G)} (z^{-1}b_{|v|})}{|\Aut \G|} = \sum_{\ell \geq 0} w^\ell (2\ell-1)!! [x^{2\ell}] \exp\left( \sum_{\gamma \in \GG^c} x^{s(\gamma)} \frac{\phi(\gamma)}{|\Aut \gamma|} \right),
\end{align*}
where we used the fact that a graph has only one subgraph with no edges. The only graphs without edges which are also connected are the star graphs $R_{0,s}$. This together with the fact that $R_{0,s}$ has the symmetric group $\Sigma_s$ as automorphism group gives   
\begin{align*}
\sum_{\gamma \in \GG^c} x^{s(\G)} \frac{\phi(\gamma)}{|\Aut \gamma|} = \sum_{s\geq 3} x^s\frac{\phi(R_{0,s})}{|\Aut R_{0,s}|}
= \frac{1}{z} 
\sum_{s\geq 3} x^s \frac{b_s}{s!}.
\end{align*}
   Setting $w = z$ results in,
\begin{align*}
    \sum_{\G \in \GG_0}\frac{\prod_{v \in V(\G)} b_{|v|}}{|\Aut \G|}z^{|\G|} = \sum_{\ell \geq 0} z^{\ell} (2\ell-1)!! [x^{2\ell}] \exp\left( \frac{1}{z}
    \sum_{s\geq 3} x^s \frac{b_s}{s!}  \right). 
\end{align*}
The right hand side is now exactly the power series  given in eq.\ \eqref{eqn:laplace_coeffs} that  determines $c_k$.
\end{proof}

\begin{proof}[Proof of Proposition~\ref{prop:bernoulli_graphs_sum}]
We start with Euler's integral representation of the gamma function 
$$\G(n)=\int_{0}^\infty u^ne^{-u}\frac{du}{u}.$$  Substituting  $u = n e^x$ gives
\begin{align*}
    \Gamma( n ) &= \int_{-\infty}^\infty n^n e^{nx} e^{-ne^x}dx
    = e^{-n} n^{n} \int_{-\infty}^\infty  e^{-n\left(e^x-x - 1 \right) } dx.
\end{align*}
We can now apply Lemma~\ref{lmm:laplace_method} with $g(x) =-(e^x-x-1)$, $f(x)=1$ and $D=\R$ to get
 an asymptotic expansion
\begin{align*}
    \Gamma( n ) &= \sqrt{2\pi} e^{-n} n^{n-\frac12} \sum_{k \geq 0} c_k n^{-k}.
\end{align*}
By Corollary~\ref{crll:graph_laplace} and because $-(e^x-x-1) = -\sum_{s\geq3} \frac{x^s}{s!}$ the coefficients satisfy,
\begin{align*}
    c_k = \sum_{ \substack{ \G \in \GG_0\\ |\G| = k} }\frac{ (-1)^{v(\Gamma)} }{|\Aut \G|} \text{ for all } k \geq 0.
\end{align*}
Stirling's approximation in eq.\ \eqref{eqn:stirling_approximation} gives another expression for the coefficients $c_k$. Because the different ways to express the asymptotic expansion of $\Gamma(n)$ with the same scale and limit must coincide, we get
\begin{align*}
\sum_{ \substack{ \G \in \GG_0} }\frac{ (-1)^{v(\Gamma)} }{|\Aut \G|} z^{|\Gamma|}
=
\exp\left( \sum_{k=1}^\infty \frac{B_{k+1}}{k(k+1)} z^k \right).
\end{align*}
Since taking the formal logarithm restricts the sum on the left to connected graphs (Lemma~\ref{lmm:exponential_formula}) we get
\begin{align*}
\sum_{ \substack{ \G \in \GG_0^c} }\frac{ (-1)^{v(\Gamma)} }{|\Aut \G|} z^{|\Gamma|}
=
  \sum_{k=1}^\infty \frac{B_{k+1}}{k(k+1)} z^k.
\end{align*}
Now notice that 
$\sigma(\Gamma) = (-1)^{e(\Gamma)} = (-1)^{|\Gamma|} (-1)^{v(\Gamma)}$ and $B_{k+1} =0$ for all even $k > 0$, giving 
\begin{align*} 
\sum_{ \substack{ \G \in \GG_0^c\\ |\G| = n} }\frac{ \sigma(\Gamma) }{|\Aut \G|}&= (-1)^n\frac{B_{n+1}}{n(n+1)}= - \frac{B_{n+1}}{n(n+1)} =\frac{\zeta(-n)}{n}. \qedhere
\end{align*}
\end{proof}

We now turn to the proof of Proposition~\ref{prop:graph_stirling}, which follows along similar lines.
\begin{proof}[Proof of Proposition~\ref{prop:graph_stirling}]
    Assume $n,m \geq 0$ with $n \geq \max\{1,m\}$. Start with Euler's integral and substitute $u = n e^x$ to obtain
\begin{align*}
    \Gamma\left( n -m +\frac12 \right) &= \int_0^\infty u^{n-m+\frac12} e^{-u} \frac{du}{u} 
    = e^{-n} n^{n-m+\frac12} \int_{-\infty}^\infty  e^{-n\left(e^x-x - 1 \right) + x \left( \frac12 - m \right) } dx.
\end{align*}
Therefore,
\begin{align*}
\psi_m (n)= 
\frac{\Gamma( n -m +\frac12 )}{\sqrt{2 \pi} e^{-n} n^{n} } = 
n^{-m}\sqrt{\frac{n}{2\pi}} 
\int_{-\infty}^\infty  e^{-n\left(e^x-x-1 \right) + x \left( \frac12 - m \right) } dx.
\end{align*}
The condition  $n \geq \max\{1,m\}$ guarantees that the integral exists.  
The functions $f(x) =e^{ x \left( \frac12 - m \right)}$ and $g(x) = -(e^x-x - 1)$,  defined on $D= \R$, satisfy the conditions of Lemma~\ref{lmm:laplace_method}, so we can apply Laplace's method to obtain
 \begin{align*}
    n^m \psi_m(n) \sim \sum_{k \geq 0} c_{m,k}' n^{-k} \text{ as } n \rightarrow \infty,
\end{align*}
where $c'_{m,k}$ is the coefficient of $z^k$ in the power series
\begin{align*}
\sum_{\ell \geq 0} z^{\ell} (2\ell-1)!! [x^{2\ell}] e^{- \frac{1}{z} \left(e^x- \frac{x^2}{2} -x - 1 \right) + x \left( \frac12 - m \right)}.
\end{align*}
From Definition \ref{def:asymptotic_expansion} and  the fact that $n^{-m}\bigO(n^{-R+m}) = \bigO(n^{-R})$, it follows that
\begin{align*}
    \psi_m(n) \sim \sum_{k \geq m} c_{m,k-m}' n^{-k} \text{ as } n \rightarrow \infty.
\end{align*}
Setting $c_{m,k} := c_{m,k-m}'$ gives eq.\ \eqref{eqn:graph_stirling_asymp_psi}.
\end{proof}

We have now completed all of the steps in the proof  of Theorem~\ref{thm:asymptotic_expansion}.
Before we continue with the proof of Theorem~\ref{thm:SVconj}, we will briefly discuss the relationship of our considerations with Kontsevich's Lie graph complex.
\subsection{Lie graph complex}
\label{sec:kontsevich} Kontsevich's {\em Lie graph complex} $\mathfrak{L}_*$  computes the Chevalley-Eilenberg homology of a certain infinite-dimensional Lie algebra $\ell_\infty$ associated to the Lie operad. %
In \cite{kontsevich1993formal} Kontsevich remarked that the orbifold Euler characteristic of the subcomplex $\mathfrak{L}^{(n)}_*$ spanned by connected graphs with fundamental group $F_n$ can be encoded as coefficients of the asymptotic expansion of the integral
\begin{align}
    \label{eqn:integral_kontsevich}
    \sqrt{\frac{n}{2 \pi}} \int_{D} \exp\left(-n \sum_{s \geq 2} \frac{x^s}{s(s-1)}\right) \sim
\sum_{k \geq 0} c_k n^{-k} \text{ as } n \to \infty,
\end{align}
where $D$ is a small domain that contains a neighborhood of $0$ and $c_k$ is the $z^k$ coefficient of the power series $\exp( \sum_{n \geq 1} \chi(\mathfrak{L}_*^{(n+1)}) z^n )$. %
Observing that $- \sum_{s \geq 2} \frac{x^s}{s(s-1)} = -\sum_{s \geq 2} (s-2)! \frac{x^s}{s!}$ and using Corollary~\ref{crll:graph_laplace} together with the exponential formula (Lemma~\ref{lmm:exponential_formula}), we may conclude that 
\begin{align*}
    \chi(\mathfrak{L}_*^{(n)}) = \sum_{\substack{\G \in \GG^c_0\\ \pi_1(\G) \cong F_n}} \frac{\xi(\G)}{|\Aut \G|},
\end{align*}
where $\xi$ is the function given by $$\xi(\G) = (-1)^{|V(\G)|} \prod_{v \in V(\G)} (|v|-2)!$$ This formula for $\chi(\mathfrak{L}_*^{(n)})$ also follows directly from counting graphs whose vertices are dressed with Lie operad elements. 
We have $\chi(\mathfrak{L}_*^{(n)}) = \ch_{n-1}$, because
\begin{align*}
    H_k(\mathfrak{L}_*^{(n)}) \iso H^{2n-2-k}(\Out(F_n)).
\end{align*}
This was first observed by Kontsevich \cite{kontsevich1993formal}; see \cite{conant2003theorem} for a detailed proof. The statements in Theorems~\ref{thm:SVconj} and \ref{thm:asymptotic_expansion}, therefore apply verbatim to the orbifold Euler characteristic of $\mathfrak{L}_*^{(n)}$. It is, however, unclear what role the map $\xi$ and the Lie graph complex play in the interesting Hopf algebraic duality between $\tau$ and $\sigma$ explained in Section~\ref{sec:core}. 

The integral in eq.\ \eqref{eqn:integral_kontsevich} gives another representation of the coefficients $\ch_n,$ but the descriptive power of this representation is limited: it seems that the integral does not evaluate to a `known' function, which could facilitate the extraction of information about the coefficients $\ch_n$. Recall that the fact that two functions have the same asymptotic expansion does not imply their equality, so it does not follow from the considerations above and Theorem~\ref{thm:asymptotic_expansion} that the left hand side of eq.\ \eqref{eqn:integral_kontsevich} is equal to $\sqrt{2 \pi}e^{-n} n^n$. %

\section{The Lambert \texorpdfstring{$W$}{W}-function}
In this section we prove that  the coefficients of the asymptotic expansion in Theorem~\ref{thm:asymptotic_expansion} are all negative. The first statement of Theorem~\ref{thm:SVconj},   that $\chi(\Out(F_n)) < 0$ for all $n \geq 2$, follows then by Lemma~\ref{lmm:exp_negative}.

\subsection{Singularity analysis}

We will accomplish this by using a second method to obtain the asymptotic expansion of the sequence $\sqrt{2 \pi} e^{-n} n^n$ with respect to the asymptotic scale $\{(-1)^k\Gamma(n-k+\frac{1}{2})\}_{k\geq0}$. This second method is \textit{singularity analysis}. By Theorem~\ref{thm:asymptotic_expansion} and because of the uniqueness of asymptotic expansions, we therefore obtain another expression for the coefficients $\hat\chi_n$ of $\exp(\sum_{n\geq 1} \ch_n z^n)$. This expression will involve the Lambert $W$-function, which is defined as the solution of the functional equation $W(z) e^{W(z)}= z$ \cite{corless1996lambertw}. Eventually, we will use a theorem of Volkmer \cite{volkmer2008factorial} to show that the coefficients of the asymptotic expansion are negative. 

\begin{figure}
\begin{tikzpicture} \begin{axis}[ height=\figureheight, tick align=outside, tick pos=left, width=\figurewidth, x grid style={white!69.01960784313725!black}, xlabel={\(\displaystyle z\)}, xmajorgrids, xmin=-1, xmax=1, xtick style={color=black}, y grid style={white!69.01960784313725!black}, ylabel={\(\displaystyle W(z)\)}, ymajorgrids, ymin=-5, ymax=5, ytick style={color=black} ] \addplot [semithick, black] table {%
-0.367879441171442 -1
-0.367322540037808 -0.945960576192766
-0.365569562332727 -0.891921152385532
-0.362489397863951 -0.837881728578298
-0.357940064029995 -0.783842304771064
-0.351767902646442 -0.72980288096383
-0.343806721099256 -0.675763457156596
-0.333876874118769 -0.621724033349363
-0.321784282228105 -0.567684609542129
-0.307319382664605 -0.513645185734895
-0.290256008301518 -0.459605761927661
-0.270350189808666 -0.405566338120427
-0.247338875984067 -0.351526914313193
-0.220938566862327 -0.297487490505959
-0.19084385385887 -0.243448066698725
-0.156725860840461 -0.189408642891491
-0.118230579620611 -0.135369219084257
-0.0749770929619213 -0.0813297952770232
-0.0265556777246851 -0.0272903714697893
0.0274742196695564 0.0267490523374447
0.0875861437909174 0.0807884761446787
0.154288935633532 0.134827899951913
0.228129192355259 0.188867323759146
0.309693891352915 0.24290674756638
0.399613189339125 0.296946171373614
0.498563407764693 0.350985595180848
0.607270216650625 0.405025018988082
0.72651202965913 0.459064442795316
0.857123624045936 0.51310386660255
1 0.567143290409784
}; \addplot [semithick, black, dashed] table {%
-0.0336897349954273 -5
-0.0376055021646619 -4.86206896551724
-0.0419426179058552 -4.72413793103448
-0.0467400631717225 -4.58620689655172
-0.0520391328982212 -4.44827586206897
-0.0578832678204224 -4.31034482758621
-0.0643177859261937 -4.17241379310345
-0.0713894875400297 -4.03448275862069
-0.0791461025317493 -3.89655172413793
-0.0876355415898728 -3.75862068965517
-0.0969049056948192 -3.62068965517241
-0.106999198646334 -3.48275862068966
-0.117959676476869 -3.3448275862069
-0.129821754505718 -3.20689655172414
-0.142612377291115 -3.06896551724138
-0.156346738389076 -2.93103448275862
-0.171024215124495 -2.79310344827586
-0.186623357930594 -2.6551724137931
-0.203095743525044 -2.51724137931034
-0.220358465453742 -2.37931034482759
-0.238284993397005 -2.24137931034483
-0.256694082986828 -2.10344827586207
-0.275336359427955 -1.96551724137931
-0.293878129430117 -1.82758620689655
-0.31188189506695 -1.68965517241379
-0.328782948102461 -1.55172413793103
-0.343861311642715 -1.41379310344828
-0.356208164845436 -1.27586206896552
-0.364685732545771 -1.13793103448276
-0.367879441171442 -1
}; \addplot [semithick, black, dotted] table {%
-0.367879441171442 -5
-0.367879441171442 5
}; \addplot [semithick, black, dotted] table {%
0 -5
0 5
}; \end{axis} \end{tikzpicture}
\caption{Plot of the two real branches of the Lambert $W$-function. The solid line depicts the principal branch $W_0$, the dashed line the other real branch, $W_{-1}$. Both branches share a square root type singularity at $z = -1/e$. The $W_{-1}$ additionally has a logarithmic singularity at $z=0$. The locations of the singularities are indicated with dotted lines.
}
\label{fig:lambertW}
\end{figure}
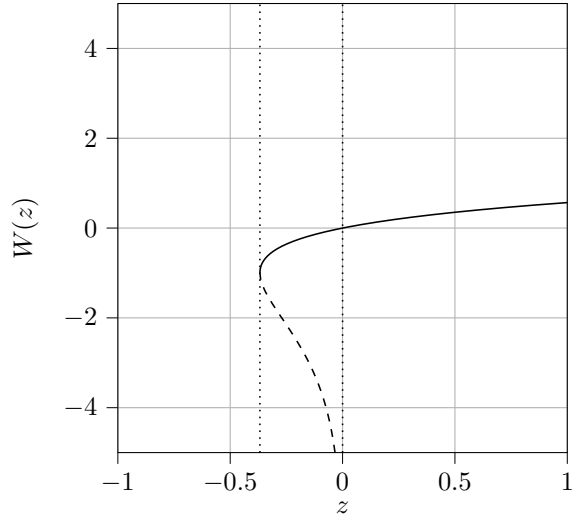

\begin{proposition}
    \label{prop:lambert_rep}
 The coefficient $\Ch_k$  of $z^k$ in $\exp\left(\sum_{n\geq 1} \ch_n z^n\right)$ satisfies
\begin{align}
    \Ch_k = -2 \frac{\Gamma(k +\frac12 )}{\sqrt{2\pi}} v_{2k-1} \text{ for all } k\geq 0,
\end{align}
where the $\{v_k\}_{k\geq -1}$ are the coefficients of the following expansion involving the derivative of the principal branch of the Lambert $W$-function in the vicinity of its branch-point at $z=-\frac{1}{e}$, 
\begin{align}
    \label{eqn:ck_def_lambert}
    z W_0'(z) &= \sum_{k= -1}^{\infty} (-1)^{k+1} v_{k} (1+ez)^{\frac{k}{2}}.
\end{align}
\end{proposition}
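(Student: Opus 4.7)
The plan is to derive the asymptotic expansion of $\sqrt{2\pi}e^{-n}n^n$ in the scale $\{(-1)^k\Gamma(n+\tfrac{1}{2}-k)\}_{k\geq 0}$ by a second, independent route based on singularity analysis of $zW_0'(z)$ near its branch point at $z=-1/e$, and then to match this expansion coefficient-by-coefficient against the one established in Theorem~\ref{thm:asymptotic_expansion}.

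First, I would use Lagrange inversion applied to the defining relation $W_0(z)e^{W_0(z)}=z$ to recover the standard series $W_0(z)=\sum_{n\geq 1}\frac{(-n)^{n-1}}{n!}z^n$, so that
$$[z^n]\,zW_0'(z)=n[z^n]W_0(z)=\frac{(-1)^{n-1}n^n}{n!}.$$
Rearranging gives $\sqrt{2\pi}\,e^{-n}n^n=(-1)^{n-1}\sqrt{2\pi}\,\frac{n!}{e^n}[z^n]\,zW_0'(z)$, so an asymptotic expansion of $[z^n]\,zW_0'(z)$ translates directly into one for $\sqrt{2\pi}e^{-n}n^n$.

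Second, I would use the fact that the principal branch $W_0$ is analytic on $\mathbb{C}\setminus(-\infty,-1/e]$ and has a square-root branch point at $z=-1/e$; the expansion \eqref{eqn:ck_def_lambert} is the classical Puiseux-type expansion of $zW_0'(z)$ at that singularity. Applying the Flajolet--Sedgewick transfer theorem in a $\Delta$-domain around $-1/e$, each term $(1+ez)^{(2j-1)/2}$ with $j\geq 0$ contributes
$$[z^n](1+ez)^{(2j-1)/2}=(-e)^n\,\frac{\Gamma(n-j+\tfrac{1}{2})}{n!\,\Gamma(\tfrac{1}{2}-j)},$$
while the integer-power terms from even $k$ vanish for $n>k/2$ and can be absorbed into the error. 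Truncating the Puiseux series at order $K$ gives an $O((1+ez)^{K/2})$ remainder, which transfers to an $O((-e)^n n^{-K/2-1})$ remainder on coefficients; letting $K\to\infty$ yields
$$[z^n]\,zW_0'(z)\sim \sum_{j\geq 0}v_{2j-1}\,(-e)^n\frac{\Gamma(n-j+\tfrac{1}{2})}{n!\,\Gamma(\tfrac{1}{2}-j)}\quad\text{as }n\to\infty.$$

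Third, substituting this into the identity from the first step, the two factors of $(-1)^n$ cancel and the $n!/e^n$ factor disappears, giving
$$\sqrt{2\pi}\,e^{-n}n^n\sim -\sqrt{2\pi}\sum_{j\geq 0}\frac{v_{2j-1}}{\Gamma(\tfrac{1}{2}-j)}\,\Gamma(n+\tfrac{1}{2}-j)\quad\text{as }n\to\infty.$$
This is precisely an asymptotic expansion in the scale $\{(-1)^j\Gamma(n+\tfrac{1}{2}-j)\}_{j\geq 0}$. Uniqueness of the coefficients of an asymptotic expansion in a fixed scale, combined with Theorem~\ref{thm:asymptotic_expansion}, yields $(-1)^j\Ch_j=-\sqrt{2\pi}\,v_{2j-1}/\Gamma(\tfrac{1}{2}-j)$. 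The reflection identity $\Gamma(\tfrac{1}{2}-j)\Gamma(\tfrac{1}{2}+j)=(-1)^j\pi$ then simplifies this to $\Ch_j=-2\Gamma(j+\tfrac{1}{2})v_{2j-1}/\sqrt{2\pi}$, as claimed.

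The main obstacle is the rigorous application of singularity analysis: one must check that $zW_0'(z)$ continues analytically to a $\Delta$-domain at $-1/e$ (which follows from analyticity of $W_0$ off the branch cut) and establish the truncation-remainder bound for \eqref{eqn:ck_def_lambert} with a uniform $O$-estimate in that domain, so that the term-by-term transfer to coefficient asymptotics is legitimate. Once these analytic prerequisites are in place, the calculation with $\Gamma$-values and signs is routine and the matching with Theorem~\ref{thm:asymptotic_expansion} follows by uniqueness.
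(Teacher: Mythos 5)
Your proposal is correct and follows essentially the same route as the paper's proof: expand $zW_0'(z)$ both as a Taylor series at $0$ (giving $(-1)^{n+1}n^n/n!$) and as the Puiseux series at the branch point $z=-1/e$, transfer the singular expansion to coefficient asymptotics via Flajolet--Odlyzko singularity analysis in a $\Delta$-domain, discard the even (polynomial) terms, and match against Theorem~\ref{thm:asymptotic_expansion} by uniqueness of asymptotic expansions, finishing with the reflection formula. The only cosmetic differences are that you invoke Lagrange inversion for the Taylor series and work at $-1/e$ directly, whereas the paper quotes the series and rescales $z\mapsto -z/e$ to place the singularity at $1$ before applying the transfer lemma.
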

In Figure~\ref{fig:lambertW}, the principal branch $W_0$ of the Lambert $W$-function is depicted with a solid line. Note that the index $k$ in the summation starts with $-1$. We chose this notation to be consistent with Volkmer \cite{volkmer2008factorial}, who proved a couple of interesting properties of the numbers $v_k$ motivated by a problem posed by Ramanujan. Most important for our considerations, he shows in \cite[Thm.\ 3]{volkmer2008factorial} that $v_k > 0$ for all $k \geq 1$.  He proves this by deriving the following integral representation for the coefficients $v_k$ \cite[Thm.\ 2]{volkmer2008factorial},
\begin{align*}
    v_k &= - \frac{1}{2\pi} \int_0^\infty (1+z)^{-\frac{k}{2}-1} \frac{\Im W_{-1}(e^{-1} z)}{|1+W_{-1}(e^{-1}z)|^2} dz \text{ for  all }k \geq 1,
\end{align*}
where  $\Im$ denotes the imaginary part of a complex number and $W_{-1}$ is the branch of the Lambert $W$-function which is real and decreasing on the interval $(-\frac{1}{e}, 0)$. This branch is drawn with a dashed line in Figure~\ref{fig:lambertW}. The integrand is strictly negative since $\Im W_{-1}(z) \in (-2\pi, -\pi)$ for $z \in (0,\infty)$ \cite{corless1996lambertw}.

\begin{corollary}
    \label{crll:negative_T}
    For all $n\geq 2$, $\chi\left( \Out(F_{n}) \right) < 0$.
\end{corollary}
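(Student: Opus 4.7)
The plan is to chain together three ingredients already stated in the excerpt: Proposition~\ref{prop:lambert_rep}, Volkmer's positivity result for the coefficients $v_k$ of the Lambert $W$ expansion, and Lemma~\ref{lmm:exp_negative}. Specifically, I would first show $\Ch_k<0$ for every $k\ge 1$, and then transfer this sign information to $\ch_n$ via Lemma~\ref{lmm:exp_negative}.

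To obtain negativity of $\Ch_k$ for $k\ge 1$, I would invoke the formula
\[
\Ch_k \;=\; -\,2\,\frac{\Gamma(k+\tfrac12)}{\sqrt{2\pi}}\,v_{2k-1}
\]
from Proposition~\ref{prop:lambert_rep}. The factor $\Gamma(k+\tfrac12)/\sqrt{2\pi}$ is strictly positive for $k\ge 0$. For $k\ge 1$ the relevant index is $2k-1\ge 1$, so I can apply Volkmer's integral representation quoted in the excerpt, namely
\[
v_{2k-1} \;=\; -\,\frac{1}{2\pi}\int_{0}^{\infty}(1+z)^{-k-\tfrac12}\,\frac{\Im W_{-1}(e^{-1}z)}{|1+W_{-1}(e^{-1}z)|^{2}}\,dz,
\]
together with the fact that $\Im W_{-1}(e^{-1}z)\in(-2\pi,-\pi)$ on $(0,\infty)$. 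The integrand is then strictly negative on a set of positive measure, so the overall minus sign in front of the integral forces $v_{2k-1}>0$. Consequently $\Ch_k<0$ for every $k\ge 1$.

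With the sign of $\Ch_k$ in hand, Lemma~\ref{lmm:exp_negative} immediately yields $\ch_n<0$ for all $n\ge 1$. Since $\ch_n=\chi(\Out(F_{n+1}))$ by definition, this is precisely the statement $\chi(\Out(F_n))<0$ for all $n\ge 2$.

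There is no real obstacle here: all of the hard analytic work is packaged into Proposition~\ref{prop:lambert_rep} (which bridges the asymptotic expansion with the Lambert $W$ expansion) and into Volkmer's theorem (which delivers the sign). The only thing I would be careful about is checking the index shift $2k-1\ge 1$ when $k\ge 1$, so that Volkmer's positivity actually applies and there is no edge case at $k=0$ that would obstruct the application of Lemma~\ref{lmm:exp_negative} (which only requires $\Ch_n<0$ for $n\ge 1$, matching exactly what we have).
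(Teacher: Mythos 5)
Your proposal is correct and follows essentially the same route as the paper: Proposition~\ref{prop:lambert_rep} plus Volkmer's positivity of $v_{2k-1}$ (the paper cites his Theorem~3 directly, while you recapitulate it via the quoted integral representation) gives $\Ch_k<0$ for $k\geq 1$, and Lemma~\ref{lmm:exp_negative} then transfers the sign to $\ch_n=\chi(\Out(F_{n+1}))$. The index bookkeeping you flag ($2k-1\geq 1$ for $k\geq 1$, and the lemma only needing $\Ch_n<0$ for $n\geq 1$) is exactly right.
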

\begin{proof}
    Apply Proposition~\ref{prop:lambert_rep}, the fact that $\Gamma(k+\frac12) > 0$ and \cite[Thm.\ 3]{volkmer2008factorial} to get 
$\Ch_n < 0$ for all $n \geq 1$. The result now follows from  Lemma~\ref{lmm:exp_negative}.
\end{proof}

As already mentioned, we will use singularity analysis to prove Proposition~\ref{prop:lambert_rep}. The basic observation behind singularity analysis is the following: the radius of convergence of the Taylor expansion of a function $f$ is equal to norm of  the singularity of $f$ in $\C$ which is closest to the origin. This singularity is called the \textit{dominant singularity} of the function. The radius of convergence is also equal to the limit $\limsup_{n\rightarrow \infty} |a_n|^{-\frac{1}{n}}$  where $f(z) = \sum_{n=0}^\infty a_n z^n$. The radius of convergence  therefore determines the exponential growth rate of the coefficients $a_n$. In many cases, the detailed nature of the function's dominant singularity determines the asymptotic behaviour of the coefficients completely. To illustrate these notions, we will start by proving one of the most basic statements from the framework of singularity analysis. For other required statements from this framework, we will refer to the literature. 
A very detailed and instructive introduction to singularity analysis can be found in Flajolet's and Sedgewick's book \cite[Part 2]{flajolet2009analytic}. 
\begin{lemma}
    \label{lmm:dominant_singularity}
    If $g$ is a generating function with power series expansion $g(z)= \sum_{n=0}^\infty b_n z^n,$ which has radius of convergence $r$, then 
\begin{align*}
    b_n \in \smallO\left(C^{-n}\right) \text{ for all } 0< C < r.
\end{align*}
\end{lemma}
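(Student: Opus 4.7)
The plan is to use the definition of the radius of convergence directly, together with a sandwich argument. First I would choose any intermediate real number $C'$ with $C < C' < r$; such a $C'$ exists by the hypothesis $C < r$. By the definition of the radius of convergence, the series $\sum_{n \geq 0} b_n z^n$ converges at the real point $z = C'$, and therefore its terms satisfy $b_n (C')^n \to 0$ as $n \to \infty$. In particular the sequence $\bigl(b_n (C')^n\bigr)_{n \geq 0}$ is bounded by some constant $M > 0$.

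Next I would factor
\begin{align*}
    b_n C^n = b_n (C')^n \cdot \left(\frac{C}{C'}\right)^n
\end{align*}
and bound
\begin{align*}
    |b_n C^n| \leq M \left(\frac{C}{C'}\right)^n.
\end{align*}
Since $0 < C/C' < 1$, the right-hand side tends to zero, so $b_n C^n \to 0$. This is precisely the statement $b_n \in \smallO(C^{-n})$ as $n \to \infty$, which is what is claimed.

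There is no real obstacle here; the argument amounts to the elementary observation that inside the disk of convergence the coefficients decay at least geometrically at any prescribed sub-maximal rate, and the gap between $C$ and $C'$ provides the extra geometric factor that upgrades boundedness to $\smallO$. The only subtlety worth noting is that one must invoke the existence of an intermediate $C'$, which is why the statement is formulated with a strict inequality $C < r$ rather than $C \leq r$.
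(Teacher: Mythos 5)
Your proof is correct, and it takes a mildly different route from the paper's. You pick an intermediate radius $C < C' < r$, use convergence of the series at $z = C'$ to conclude that the terms $b_n (C')^n$ are bounded, and then beat that bound with the geometric factor $(C/C')^n$ to get $b_n C^n \to 0$. The paper instead invokes the Cauchy--Hadamard formula $r^{-1} = \limsup_{n\to\infty} |b_n|^{1/n}$: for every $\delta > 0$ one eventually has $|b_n| < (r^{-1}+\delta)^n$, and choosing $\delta$ small enough that $(r^{-1}+\delta)C < 1$ gives the claim. Both arguments are equally short; yours needs only the defining property of the radius of convergence plus the fact that terms of a convergent series are bounded, whereas the paper's makes the dependence on $r$ explicit via the root test and remarks separately that the argument also covers $r = \infty$. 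Your argument handles $r=\infty$ as well, since an intermediate $C'$ with $C < C' < r$ still exists in that case, though it would be worth saying so explicitly.
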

\begin{proof}
By elementary calculus, $r^{-1} = \limsup_{n \rightarrow \infty} |b_n|^{\frac{1}{n}}.$ Therefore for every $\delta > 0$ there exists an $n_0$  such that $|b_n|^{\frac{1}{n}} < r^{-1}+\delta$ for all $n \geq n_0.$ It follows that $$|b_n| < (r^{-1} +\delta)^n = \left( \frac{r}{1 + \delta r} \right)^{-n} \text{ for all } n \geq n_0.$$
Because we can choose any $\delta > 0$, the statement follows. This argument also works if $r = \infty$. 
\end{proof}

Suppose we can decompose a generating function $h(z)=\sum_{n\geq 0}d_nz^n$ as a sum $h(z)=f(z)+g(z)$ with $f(z)=\sum_{n\geq 0}a_nz^n$ and $g$ analytic in a disk around $0\in \C$ of radius larger than $1$.
Then by Lemma~\ref{lmm:dominant_singularity} there is a constant $C>1$ such that 
\begin{align*}
    d_n = a_n + \smallO(C^{-n}).
\end{align*}
This is especially useful if the coefficients $a_n$ have an asymptotic expansion,
\begin{align*}
    a_n \sim \sum_{k \geq 0 } c_k \varphi_k(n) \text{ as } n \to \infty,
\end{align*}
with an asymptotic scale $\{\varphi_k\}_{k \geq 0}$ which satisfies $\smallO(C^{-n}) \subset \bigO(\varphi_k(n))$ for all $k \geq 0$. In this common case, we may neglect terms contributed by $g$ to the generating function $h$
and conclude that 
\begin{align*}
    d_n \sim \sum_{k \geq 0 } c_k \varphi_k(n) \text{ as } n \to \infty.
\end{align*}

\begin{figure}
\begin{center}
\begin{tikzpicture} [scale=1.5] \fill [gray!20] ([shift=(20:1.3cm)] 0,0) arc (20:340:1.3); \fill [white] (20:1.3) to (.5,0) to (340:1.3) to (20:1.3); \draw [thick] ([shift=(0:.8)] 0,0) arc (0:20:.5); \draw[->] (-2,0) to (2,0); \draw[->] (0,-2) to (0,2); \fill (0:.5) circle (.025); \node [above] (one) at (0:.5) {$1$}; \node [below] (Re) at (2,0) {$\Re z$}; \node [left] (Im) at (0,2) {$\Im z$}; \node (fi) at (1,.15) {$\phi$}; \node (Delta) at (-.4,.4) {$\Delta$}; \end{tikzpicture}
\caption{The region $\Delta \subset \C$ in the statement of Lemma~\ref{lmm:singularity_analysis}}\label{fig:pacman}
\end{center}
\end{figure}
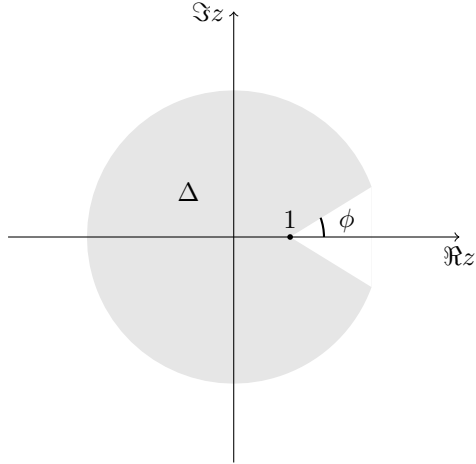

To prove Proposition~\ref{prop:lambert_rep} we will need
\begin{lemma}[Basic singularity analysis {\cite[Cor.\ 3]{flajolet1990singularity}}]
    \label{lmm:singularity_analysis}
    Let $f:\C \rightarrow \C$ be analytic at $0$ with an isolated singularity at $1$, such that $f(z)$ can be analytically continued to an open domain of the form $\Delta = \{ z : |z| < R, z \neq 1, | \arg(z-1)| > \phi \} \subset \C$ with some $R > 1$ and $0 < \phi < \pi/2$ (see Figure~\ref{fig:pacman}). 
    If $f(z)$ has the following asymptotic behaviour in $\Delta$ for $R \geq 0$,
    \begin{align}
        \label{eqn:singular_limit_exp}
        f(z) &= \sum_{k=0}^{R-1} c_k (1-z)^{\alpha_k} + \bigO\left((1-z)^{A}\right) \text{ as } z \rightarrow 1^{-},
    \end{align}
    where $c_k\in \R$ and $\alpha_0 \leq \alpha_1 \leq \ldots \leq \alpha_{R-1} < A \in \R$, then the coefficients $a_n = [z^n] f(z)$ have the asymptotic behaviour,     \begin{align}
        \label{eqn:singular_limit_asymp}
        a_n = \sum_{k = 0}^{R-1} c_k \binom{ n - \alpha_{k} -1}{n} + \bigO(n^{-A-1}) \text{ as } n \rightarrow \infty.
    \end{align}
\end{lemma}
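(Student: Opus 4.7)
The approach is \emph{Hankel-contour integration}: recover the Taylor coefficients via Cauchy's formula and deform the contour so that it hugs the singularity at $z=1$ from inside $\Delta$.

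By linearity of $[z^n]$ it suffices to handle each principal term $c_k(1-z)^{\alpha_k}$ in \eqref{eqn:singular_limit_exp} and the error term separately. For each pure power, the ordinary binomial series gives the exact identity $[z^n](1-z)^\alpha = \binom{n-\alpha-1}{n}$, which already reproduces the explicit contribution appearing in \eqref{eqn:singular_limit_asymp}. What remains, and is the real content of the lemma, is to show that if $g$ is analytic in $\Delta$ with $g(z)=\bigO((1-z)^A)$ uniformly as $z\to 1$ in $\Delta$, then $[z^n]g(z) = \bigO(n^{-A-1})$.

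For this error estimate I would start from
\[
[z^n]g(z) \;=\; \frac{1}{2\pi i}\oint_{\mathcal{C}_n} \frac{g(z)}{z^{n+1}}\,dz
\]
and choose $\mathcal{C}_n$ to be a Hankel contour tailored to $\Delta$: an outer arc on $|z|=R'$ for some fixed $1<R'<R$, two straight rays reaching into the slit sector of $\Delta$, and a small circle of radius $1/n$ around $z=1$. The excluded sector $|\arg(z-1)|\leq\phi$ in the definition of $\Delta$ is precisely what guarantees that the straight rays lie in the domain of analyticity of $g$. The outer-arc piece is $\bigO((R')^{-n})$ by a direct $M/R^n$ estimate (compare Lemma~\ref{lmm:dominant_singularity}), hence negligible against any polynomial decay. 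On the inner Hankel piece I would perform the rescaling $z = 1 + w/n$, turning the rays and small circle into an $n$-independent contour $H$ around $w=0$. The integrand becomes
\[
\frac{g(1+w/n)}{(1+w/n)^{n+1}}\cdot\frac{dw}{n},
\]
and using $(1+w/n)^{-n-1}\to e^{-w}$ uniformly on $H$ together with $g(1+w/n)=\bigO(|w/n|^A)$, the contribution is $\bigO(n^{-A-1})$ times a convergent Hankel integral of $(-w)^A e^{-w}$.

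The main obstacle is technical: one has to justify that the error bound $g(z)=\bigO((1-z)^A)$, stated in \eqref{eqn:singular_limit_exp} only as a one-sided radial limit, actually holds uniformly as $z\to 1$ from all directions inside $\Delta$. This uniformity is exactly what legitimises the pointwise estimate of the integrand along the full Hankel contour, and it follows from the analytic continuation hypothesis combined with standard remainder control in expansions of analytic functions (this is implicit in the way the result is formulated in Flajolet--Sedgewick). Once this uniformity is in place, summing the contributions from the outer arc, the two rays, and the circle around $z=1$ yields the stated $\bigO(n^{-A-1})$ bound and completes the proof.
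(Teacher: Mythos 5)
The paper itself contains no proof of this lemma: it is imported verbatim from \cite[Cor.~3]{flajolet1990singularity}, and your Hankel-contour sketch is exactly the proof given there and in \cite[Ch.~VI]{flajolet2009analytic} --- exact binomial extraction $[z^n](1-z)^{\alpha}=\binom{n-\alpha-1}{n}$ for the finitely many singular terms, plus a Cauchy-integral transfer estimate for the $\bigO\left((1-z)^{A}\right)$ remainder along a contour consisting of an outer arc $|z|=R'$ with $1<R'<R$, two rays avoiding the forbidden sector, and an arc of radius $1/n$ about $z=1$, rescaled via $z=1+w/n$. So your route is correct and coincides with the proof in the reference on which the paper relies.

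The one claim you should not let stand is that the uniform bound $g(z)=\bigO\left((1-z)^{A}\right)$ throughout $\Delta$ ``follows from the analytic continuation hypothesis.'' It does not follow from anything: analyticity in $\Delta$ together with an estimate as $z\to1^{-}$ along the real segment gives no control on the rays. For instance $g(z)=e^{-1/(1-z)}$ is analytic in every $\Delta$-domain and is $\bigO\left((1-z)^{A}\right)$ for every $A$ as $z\to1^{-}$, yet it blows up as $z\to1$ along directions with $|\arg(1-z)|>\pi/2$ (which lie in $\Delta$ because $\phi<\pi/2$), and its Taylor coefficients are of order $n^{-3/4}$ up to a Bessel-type oscillation, so the conclusion $a_n=\bigO(n^{-A-1})$ for all $A$ would be false under the purely radial reading. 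The uniform validity of \eqref{eqn:singular_limit_exp} as $z\to1$ inside $\Delta$ is a \emph{hypothesis} of the transfer theorem --- this is how ``in $\Delta$'' in the statement, and Flajolet--Odlyzko's formulation, must be read --- so state it as an assumption rather than attempting to derive it. With that reading, your estimate of the integrand along the full contour is legitimate and the argument goes through; in the paper's actual application (Proposition~\ref{prop:lambert_rep}) the expansion at the branch point of $W_0$ is a convergent Puiseux series, so the uniformity holds there in any case.
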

    Note that eq.\ \eqref{eqn:singular_limit_asymp} is not an asymptotic expansion in the sense of Definition \ref{def:asymptotic_expansion}, because we did not specify an asymptotic scale. 

\begin{proof}[Proof of Proposition~\ref{prop:lambert_rep}]
The principal branch of the Lambert $W$-function has the series representation \cite{corless1996lambertw},
\begin{align*}
    W_0(z) &= \sum_{n\geq1} (-1)^{n+1} \frac{n^{n-1}}{n!} z^n.
\end{align*}
By acting with $z \frac{d}{d z}$, we obtain the expansion
\begin{align}
    \label{eqn:Wprime_expansion}
    z W_0'(z) &= \sum_{n\geq1} (-1)^{n+1} \frac{n^{n}}{n!} z^n.
\end{align}
The function $W_0$ is analytic in the cut plane $\C \setminus [-1/e,-\infty)$ and has an expansion in the vicinity of the branch point at $z=-1/e$,
\begin{align*}
    W_0(z) &= -1 + \sqrt{2(1+ez)} - \frac{2}{3} (1+ez) + \frac{11}{72} \left(\sqrt{2(1+ez)}\right)^{3} + \ldots
\end{align*}
which is convergent if $z \in [-1/e,0)$ \cite[Sec.\ 4]{corless1996lambertw} (see also Figure~\ref{fig:lambertW}).
Therefore, the function $z W_0'(z)$ has an expansion of the form
\begin{align*}
    z W_0'(z) &= \sum_{k = -1}^\infty (-1)^{k+1} v_{k} (1+ez)^{\frac{k}{2}}.
\end{align*}
Using the basic version of singularity analysis from Lemma~\ref{lmm:singularity_analysis}, we can obtain the asymptotic behaviour of the sequence $e^{-n} \frac{n^{n}}{n!}$ from this: 
first we rescale the $z$-variable of $z W_0'(z)$ to obtain the expansion,
\begin{align*}
    -\frac{z}{e} W_0'\left(-\frac{z}{e} \right) &= \sum_{k = -1}^{R-1} (-1)^{k+1} v_k (1-z)^{\frac{k}{2}} + \bigO\left((1-z)^{\frac{R}{2}}\right) \text{ as } z \rightarrow 1^{-} \text{ for all } R \geq 0.
\end{align*}
As $z W_0'(z)$ is analytic in the cut plane $\C \setminus [-1/e,-\infty)$, the function $-\frac{z}{e} W_0'\left(-\frac{z}{e} \right)$ is analytic in another cut plane $\C \setminus [1,\infty)$. 
As $\Delta \subset \C \setminus [1,\infty)$, we can apply Lemma~\ref{lmm:singularity_analysis} and eq.\ \eqref{eqn:Wprime_expansion} to get
\begin{align*}
    [z^n]\frac{-z}{e} W_0'\left(-\frac{z}{e} \right) &= 
    -e^{-n} \frac{n^{n}}{n!} =
    \sum_{k = -1}^{R-1} (-1)^{k+1} v_{k} \binom{n-\frac{k}{2} -1}{n}  + \bigO\left(n^{-\frac{R}{2}-1}\right) \text{ for all } R \geq 0,
\end{align*}
where we used $\alpha_k = \frac{k}{2}$ and $A = \frac{R}{2}$.
The even contributions in the sum over $k$ vanish since the first argument of the binomial coefficient is an integer that is smaller than the second. Therefore, 
\begin{align*}
    -e^{-n} \frac{n^{n}}{n!} &=  \sum_{k = 0}^{R-1} v_{2k-1} \binom{n-k -\frac12}{n}  + \bigO\left(n^{-R- \frac12}\right)  \text{ for all } R \geq 0.
\end{align*}
The binomial coefficient can be expressed in terms of $\Gamma$ functions $\binom{n-k -\frac12}{n} = \frac{\Gamma(n-k +\frac12)}{n! \Gamma(\frac12 - k)}$. As a consequence of the reflection formula $\Gamma(z)\Gamma(1-z) =\frac{\pi}{\sin(\pi z)}$, we have $\Gamma\left(\frac12 - k\right) = \frac{(-1)^k \pi}{\Gamma(k + \frac12)}$. Hence,
\begin{align*}
    -e^{-n} \frac{n^{n}}{n!} &=  \frac{1}{\pi n!}
    \sum_{k = 0}^{R-1}(-1)^k v_{2k-1} \Gamma\left( n - k + \frac12  \right)  \Gamma\left(k+\frac12\right) + \bigO\left(n^{-\frac{R}{2}-1}\right) \text{ for all } R \geq 0.
\end{align*}
The statement follows from the uniqueness of asymptotic expansions and the property of the $\Gamma$ function that $\bigO\left((n!) n^{-R-\frac12}\right) = \bigO\left(\Gamma\left(n-R+\frac12\right)\right)$.
\end{proof}

    Proposition~\ref{prop:lambert_rep} together with known techniques for evaluating the various expansion coefficients of the Lambert $W$-function provides an efficient way to calculate the numbers $\ch_n$: 

\begin{proposition}
\label{prop:efficient_chn}
The numbers $\ch_n$ and $\Ch_n$ can be calculated using the recursion equations,
    \begin{gather*}
     \ch_n=\Ch_n-\frac{1}{n}\sum_{k=1}^{n-1}k \ch_k \Ch_{n-k} \\
    \Ch_n = - (2n-1)!!\left( \frac12 (2n-1) \mu_{2n-1} -(2n+1) \mu_{2n+1} \right)
\\
    \mu_n = \frac{n-1}{n+1}\left( \frac{\mu_{n-2}}{2} + \frac{\alpha_{n-2}}{4} \right) -\frac{\alpha_{n}}{2} - \frac{\mu_{n-1}}{n+1} \\
    \alpha_n = \sum_{k=2}^{n-1} \mu_k \mu_{n+1-k},
    \end{gather*}
 for all $n \geq 1$ with $\alpha_0=2, \alpha_1= -1, \mu_{-1} = 0, \mu_0 = -1, \mu_1 = 1$ and $\Ch_0 = 1$.
\end{proposition}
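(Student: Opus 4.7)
The first recursion, $\ch_n=\Ch_n-\frac{1}{n}\sum_{k=1}^{n-1}k\ch_k\Ch_{n-k}$, is nothing but equation~\eqref{eqn:exp_pwrsrs_relation}, obtained earlier by formally differentiating $e^{T(z)}=\sum_{n\geq 0}\Ch_n z^n$, so no further work is needed there.

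For the formula expressing $\Ch_n$ in terms of the $\mu$'s, my plan is to re-expand the branch-point series of $W_0$ used in the proof of Proposition~\ref{prop:lambert_rep}. Setting $p:=\sqrt{2(1+ez)}$, that series takes the shape $W_0(z)=-1+\sum_{k\geq 1}\mu_k p^k$ with $\mu_1=1,\mu_2=-\tfrac13,\mu_3=\tfrac{11}{72},\dots$; this motivates the conventions $\mu_0=-1$ and $\mu_{-1}=0$. Because $z=(p^2-2)/(2e)$ and $dp/dz=e/p$, a short chain-rule computation yields
\begin{equation*}
zW_0'(z)=-\mu_1 p^{-1}-2\mu_2+\sum_{j\geq 1}\Bigl[\tfrac{j}{2}\mu_j-(j+2)\mu_{j+2}\Bigr]p^j.
\end{equation*}
Comparing with the expansion \eqref{eqn:ck_def_lambert} (rewritten using $(1+ez)^{k/2}=2^{-k/2}p^k$) gives $v_{2n-1}=2^{n-3/2}\bigl[(2n-1)\mu_{2n-1}-2(2n+1)\mu_{2n+1}\bigr]$ for every $n\geq 0$. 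Substituting into $\Ch_n=-2\Gamma(n+\tfrac12)v_{2n-1}/\sqrt{2\pi}$ from Proposition~\ref{prop:lambert_rep} and using the elementary identity $\Gamma(n+\tfrac12)=(2n-1)!!\,\sqrt{\pi}/2^{n}$ makes all factors of $2^{n}$ and $\sqrt{2\pi}$ cancel cleanly, leaving the stated formula for $\Ch_n$.

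For the recursion on $\mu_n$ itself, I would trade the transcendental equation $W_0 e^{W_0}=z$ for a rational ODE in $p$. Putting $u:=W_0+1=\sum_{k\geq 1}\mu_k p^k$, the defining relation becomes $(u-1)e^u=p^2/2-1$; differentiating once yields $uu'e^u=p$, and eliminating $e^u$ between the two equations gives the purely algebraic ODE
\begin{equation*}
u u'(p^2-2)=2p(u-1).
\end{equation*}
Writing $uu'=\tfrac12(u^2)'=\tfrac12\sum_{m\geq 1}(m+1)A_m p^m$ with $A_m:=\sum_{k=1}^{m}\mu_k\mu_{m+1-k}$ and comparing $p^m$-coefficients on both sides gives, for every $m\geq 2$ (with $A_0:=0$), the simple convolution recursion $(m+1)A_m=\tfrac{m-1}{2}A_{m-2}-2\mu_{m-1}$. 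Peeling off the two endpoint summands $\mu_1\mu_m+\mu_m\mu_1=2\mu_m$ from $A_m$ produces the decomposition $A_m=2\mu_m+\alpha_m$ with $\alpha_m$ as in the statement, and solving for $\mu_m$ recovers the displayed recursion. The seed values $\alpha_0=2$ and $\alpha_1=-1$ are chosen precisely so that $A_0=2\mu_0+\alpha_0=0$ and $A_1=2\mu_1+\alpha_1=1=\mu_1^2$ remain valid, which lets the one formula for $\mu_m$ cover the small cases $m=1,2,3$ as well. The whole argument is a bookkeeping exercise once the above ODE is in hand; the main subtlety is the index shift between $v_k$ and $\mu_k$ in the first step, which is what ties $\Ch_n$ to a combination of $\mu_{2n-1}$ and $\mu_{2n+1}$ rather than to a single coefficient.
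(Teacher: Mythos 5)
Your proposal is correct and follows the same chain of identities as the paper's proof: eq.~\eqref{eqn:exp_pwrsrs_relation} for the first recursion, Proposition~\ref{prop:lambert_rep} together with the expression of $v_{2n-1}$ in terms of the branch-point coefficients $\mu_k$ for the formula for $\Ch_n$, and the differential equation satisfied by $W$ for the recursion on $\mu_n$. The only difference is that where the paper cites Volkmer's eq.~(2.11) and Corless et al.'s eqs.~(4.23)--(4.24), you derive those identities directly (via the chain rule in $p=\sqrt{2(1+ez)}$ and the algebraic ODE $uu'(p^2-2)=2p(u-1)$), and your derivations check out, including the endpoint bookkeeping $A_m=2\mu_m+\alpha_m$ with the seeds $\alpha_0=2$, $\alpha_1=-1$ and the cancellation of the powers of $2$ via $\Gamma(n+\tfrac12)=(2n-1)!!\sqrt{\pi}/2^n$.
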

\begin{proof}
The coefficients $\mu_n$ are the expansion coefficients of the Lambert-$W$ function near its branch point:
$W_0(z) = \sum_{n \geq 0} \mu_n \left( 2 ( 1+ez) \right)^{\frac{n}{2}}.$ The recursion for $\mu_n$ is given in \cite[eqs.\ (4.23) and (4.24)]{corless1996lambertw}; it follows from the differential equation which $W$ satisfies. 
We can adapt \cite[eq.\ (2.11)]{volkmer2008factorial} to the notation of \cite{corless1996lambertw} (compare \cite[eq.\ (2.1)]{volkmer2008factorial} with the definition of $\mu_n$) to get an expression for $v_n$ in terms of $\mu_n$:
\begin{align*}
        v_n =  (-1)^{n+1} 2^{\frac{n}{2}} \left( \frac12 n \mu_n -(n+2) \mu_{n+2} \right)
\end{align*}
The equation for $\Ch_n$ follows using Proposition~\ref{prop:lambert_rep} and $(2n-1)!! = 2^{n+\frac12}\Gamma(n+\frac12)$. Finally, we use eq.~\eqref{eqn:exp_pwrsrs_relation} to translate from $\Ch_n$ to $\ch_n$.
\end{proof}

    Written in power series notation with $T(z) = \sum_{n\geq 1} \ch_n z^n$ and $\exp(T(z)) = \sum_{n\geq 0} \Ch_n z^n$, the first few coefficients are
    \begin{gather*}
T(z)= - \frac{1}{24} z - \frac{1}{48} z^{2} - \frac{161}{5760} z^{3} - \frac{367}{5760} z^{4} - \frac{120257}{580608} z^{5}  + \ldots \\
\exp(T(z))= 1 - \frac{1}{24} z - \frac{23}{1152} z^{2} - \frac{11237}{414720} z^{3} - \frac{2482411}{39813120} z^{4} - \frac{272785979}{1337720832} z^{5}  + \ldots
    \end{gather*}
    With this approach we calculated the value of $\ch_n$ up to $n=1000$ with basic computer algebra tools. 
    
In addition to being able compute the value of $\ch_n$  for very large $n$,  we can also determine the explicit asymptotic behavior of the coefficients for large $n$.  We  do that in the next section.

\subsection{The asymptotic growth of \texorpdfstring{$\chi(\Out(F_n))$}{chi(Out(Fn))}}

\begin{proposition}
    \label{prop:chi_outfn_asymptotics}
    The Euler characteristic of $\Out(F_n)$ has the leading asymptotic behaviour,
    \begin{align}
        \chi(\Out(F_n))=
        - \frac{1}{\sqrt{2\pi}} \frac{\Gamma(n -\frac32 )}{\log^2 n}  + \bigO\left( \frac{\log \log n }{\log^3 n}\Gamma\left(n -\frac32 \right) \right) \text{ as } n\to \infty.
    \end{align}
\end{proposition}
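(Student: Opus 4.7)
The plan is to combine Proposition~\ref{prop:lambert_rep} with a careful asymptotic analysis of Volkmer's integral representation of $v_k$ to obtain an asymptotic for $\Ch_n$, then pass to $\ch_n$ via the recursion~\eqref{eqn:exp_pwrsrs_relation} and finally to $\chi(\Outn)=\ch_{n-1}$.

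The heart of the argument is to show that
\[
v_k=\frac{1}{k\log^2 k}+\bigO\!\left(\frac{\log\log k}{k\log^3 k}\right)\quad\text{as }k\to\infty.
\]
For this I would substitute $t=\log(1+z)$ in Volkmer's integral, converting it into $-\frac{1}{2\pi}\int_0^\infty e^{-kt/2}G(t)\,dt$ with $G(t)=\Im W_{-1}(e^{-1}(e^t-1))/|1+W_{-1}(e^{-1}(e^t-1))|^2$. For large $k$ the dominant contribution comes from $t$ near $0$. Writing $W_{-1}(e^{-1}z)=-M+iv$ and solving $We^W=e^{-1}z$ iteratively gives, as $z\to 0^+$, $v\to -\pi$ and $M=-\log z+\log(-\log z)+\bigO(1)$, so that
\[
G(t)=-\frac{\pi}{(\log t)^2}\Bigl(1+\bigO\bigl(\log|\log t|/|\log t|\bigr)\Bigr).
\]
A Watson-type argument---substituting $s=kt/2$ and estimating the tail $t\geq\delta$ by the exponentially small factor $e^{-k\delta/2}$ together with the fact that $G$ is bounded on $[\delta,\infty)$ (since $\Im W_{-1}$ is bounded and $|1+W_{-1}|$ is bounded below there)---then yields the stated asymptotic for $v_k$.

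Inserting this into Proposition~\ref{prop:lambert_rep} and using $\Gamma(n+\tfrac12)=(n-\tfrac12)(n-\tfrac32)\Gamma(n-\tfrac32)$ produces
\[
\Ch_n=-\frac{n\,\Gamma(n-\tfrac32)}{\sqrt{2\pi}\,\log^2 n}+\bigO\!\left(\frac{n\,\Gamma(n-\tfrac32)\log\log n}{\log^3 n}\right).
\]
I would then use the recursion $\ch_n=\Ch_n-\frac{1}{n}\sum_{k=1}^{n-1}k\ch_k\Ch_{n-k}$ to show $\ch_n$ has the same asymptotic: the $k=1$ summand contributes $\frac{\ch_1}{n}\Ch_{n-1}=\bigO(\Gamma(n-\tfrac32)/(n\log^2 n))$, which is smaller than the main term in $\Ch_n$ by a factor of order $n^2$, and a short induction bounds the remaining summands similarly. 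Finally, replacing $n$ by $n-1$ and simplifying $(n-1)\Gamma(n-\tfrac52)=\Gamma(n-\tfrac32)\bigl(1+\bigO(1/n)\bigr)$ and $\log^2(n-1)=\log^2 n\bigl(1+\bigO(1/(n\log n))\bigr)$ produces the stated asymptotic $\chi(\Outn)\sim -\Gamma(n-\tfrac32)/(\sqrt{2\pi}\log^2 n)$ with error $\bigO(\log\log n/\log^3 n\cdot\Gamma(n-\tfrac32))$.

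The main obstacle is the local analysis of $W_{-1}$ at $0^+$: iterating the implicit equation $We^W=z$ with enough accuracy to secure the relative error $\bigO(\log\log k/\log k)$ in $v_k$ (rather than a larger logarithmic power), and coupling this with a sharp enough Watson-type estimate so that the error in $\Ch_n$ is only the advertised $\log\log n/\log n$ correction to the main term. All the subsequent transitions---from $v_k$ to $\Ch_n$, from $\Ch_n$ to $\ch_n$, and from $\ch_{n-1}$ to $\chi(\Outn)$---contribute only lower-order corrections and should be routine once the leading asymptotic of $v_k$ is pinned down.
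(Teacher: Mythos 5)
Your proposal is correct and reaches the stated asymptotics, but by a genuinely different route for the key analytic input. The paper obtains the behaviour of the odd coefficients $v_{2k-1}$ by singularity analysis: it forms $g(w)=\tfrac12\sqrt{w}\,\bigl(zW_0'(z)-zW_{-1}'(z)\bigr)$, identifies the dominant singularity at $w=1$ with the logarithmic singularity of $W_{-1}$ at $0$, and applies the logarithmic transfer results of \cite{flajolet1990singularity,flajolet2009analytic}; you instead work directly with Volkmer's integral representation \cite{volkmer2008factorial}, rewrite it as the Laplace transform $-\tfrac{1}{2\pi}\int_0^\infty e^{-kt/2}G(t)\,dt$ (the substitution $t=\log(1+z)$ does make the Jacobian cancel exactly), and run a Watson-type analysis at $t=0$ using $\Im W_{-1}(x)\to-\pi$ and $\Re W_{-1}(x)=\log x-\log\log(1/x)+\bigO(1)$ as $x\to0^+$. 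Your expansion $G(t)=-\pi\,\log^{-2}t\,\bigl(1+\bigO(\log|\log t|/|\log t|)\bigr)$ and the estimate $\int_0^{\delta}e^{-kt/2}\log^{-2}t\,dt=\tfrac{2}{k\log^2k}\bigl(1+\bigO(1/\log k)\bigr)$ are correct and give $v_k=\tfrac{1}{k\log^2k}+\bigO\bigl(\log\log k/(k\log^3 k)\bigr)$; the positive sign is the right one, consistent with Volkmer's $v_k>0$ and with the computation inside the paper's proof of Lemma~\ref{lmm:asymp_vk} (the minus sign in that lemma's displayed statement is a typo), and it is what the proof of Proposition~\ref{prop:chi_outfn_asymptotics} actually uses. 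The trade-off: the paper outsources the delicate uniform estimates to ready-made transfer theorems at the cost of checking analytic continuation to a $\Delta$-domain, while your route is more elementary and self-contained but requires carrying out the uniform local analysis of $W_{-1}$ on the positive axis and the two-scale Laplace estimate by hand --- exactly the obstacle you identify, and it is surmountable as sketched. For the passage from $\Ch_n$ to $\ch_n$ the paper invokes Wright's theorem (Lemma~\ref{lmm:wright_connected_asymptotics}) and verifies its convolution hypothesis via log-convexity of $x\mapsto\Gamma(n-x-\tfrac12)\Gamma(x-\tfrac12)\log^{-2}(1+n-x)\log^{-2}(1+x)$; your direct induction on the recursion~\eqref{eqn:exp_pwrsrs_relation} amounts to the same thing, and the clause ``a short induction bounds the remaining summands'' is precisely where such a convexity (or comparable) bound on the middle of the convolution must be supplied; note also that the $k=n-1$ end term is only a factor of order $n$ (not $n^2$) below the main term, which is still far inside the allowed relative error $\log\log n/\log n$. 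The final bookkeeping with the Gamma-function shifts and $\log^2(n-1)$ versus $\log^2 n$ is routine, as you say.
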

We will prove Proposition~\ref{prop:chi_outfn_asymptotics} by applying a stronger version of singularity analysis to determine the asymptotic behaviour of the coefficients $v_k$. Proposition~\ref{prop:lambert_rep} and a classic theorem by Wright \cite{wright1970asymptotic} will eventually enable us to deduce the asymptotic behaviour of the sequence $\chi(\Out(F_n))$.
\begin{lemma}
    \label{lmm:asymp_vk}
    The coefficients $v_{k}$ have the leading asymptotic behaviour,
    \begin{align}
        v_{k} &=  -\frac{1}{k(\log k)^2} + \bigO\left( \frac{\log \log k}{k(\log k)^3}\right) \text{ as } k \to \infty.
    \end{align}
\end{lemma}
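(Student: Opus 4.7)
The plan is to apply Laplace's method directly to Volkmer's integral representation
\begin{align*}
v_k = -\frac{1}{2\pi}\int_0^\infty (1+z)^{-k/2-1}\, \frac{\Im W_{-1}(e^{-1}z)}{|1+W_{-1}(e^{-1}z)|^2}\, dz,
\end{align*}
stated just before the lemma. Since the weight $(1+z)^{-k/2-1}$ concentrates the integrand near $z=0$ as $k\to\infty$, the asymptotic of $v_k$ is governed entirely by the small-$z$ behavior of the non-weight factor.

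First I would pin down the asymptotics of $W_{-1}(u)$ as $u\to 0^+$ on its complex branch. Writing $W_{-1}(u)=a+ib$ and separating real and imaginary parts of the defining equation $We^W=u$ gives the pair of relations $\tan b = -b/a$ and $(a\cos b - b\sin b)e^{a}=u$. Because $\Im W_{-1}(u)\in(-2\pi,-\pi)$ and $a\to-\infty$, the first relation fixes $b=-\pi+\pi/a+\bigO(1/a^2)$, and the second then produces the standard iterated-logarithm expansion $a=\log u - \log|\log u| + \bigO(\log|\log|\log u||/|\log|\log u||)$. Combining these yields
\begin{align*}
|1+W_{-1}(u)|^{2} = (\log u)^{2}\!\left(1+\bigO\!\left(\tfrac{\log|\log u|}{|\log u|}\right)\right),
\qquad
\Im W_{-1}(u) = -\pi\bigl(1+\bigO(1/|\log u|)\bigr),
\end{align*}
so Volkmer's integrand satisfies $\Im W_{-1}(u)/|1+W_{-1}(u)|^{2}=-\pi(\log u)^{-2}(1+\bigO(\log|\log u|/|\log u|))$.

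The second step is the Laplace rescaling $z=2t/k$. Under this substitution $(1+2t/k)^{-k/2-1}\to e^{-t}$ pointwise (and is dominated by $e^{-t/2}$ for $k$ large), while $\log(2t/(ek))=-\log k+\log(2t/e)$ gives $(\log(2t/(ek)))^{-2}=(\log k)^{-2}(1+\bigO(\log\log k/\log k))$ uniformly for $t$ in compact subsets of $(0,\infty)$. Plugging into the integral and using $\int_0^\infty e^{-t}dt=1$ produces the claimed leading term $\pm 1/(k(\log k)^{2})$ together with a remainder of order $\log\log k/(k(\log k)^{3})$, matching the statement of the lemma up to the choice of sign convention.

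The main obstacle is extracting the error term at the stated precision uniformly in $t$. The tail $t\gg \log k$ is harmless because $(1+2t/k)^{-k/2-1}$ decays super-exponentially there, but the regime $t\to 0^+$ (where $u=2t/(ek)$ is extremely small and the $u$-dependent error in the small-$u$ expansion of the integrand does \emph{not} simplify to a constant) requires care. A workable bookkeeping is to split the integral at some intermediate scale such as $t=(\log k)^{1/2}$, handle the outer part by the crude bound $(1+z)^{-k/2-1}\leq e^{-t/2}$, and substitute the small-$u$ expansion into the inner part while carefully tracking the error terms; showing that nothing larger than $\bigO(\log\log k/(k(\log k)^{3}))$ survives is the delicate technical point.
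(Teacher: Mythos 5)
Your proposal is correct in substance but follows a genuinely different route from the paper. The paper never uses Volkmer's integral representation for the asymptotics (only for positivity); instead it packages the odd coefficients into the generating function $g(w)=\tfrac12\sqrt{w}\,\bigl(zW_0'(z)-zW_{-1}'(z)\bigr)=\sum_{k\geq 0}v_{2k-1}w^k$ with $w=1+ez$, uses the differential equation $W'(z)=W(z)/(z(1+W(z)))$ and the logarithmic behaviour of $W_{-1}$ at $z=0$ to obtain the singular expansion $g(w)=-\tfrac12\bigl(\log\tfrac{1}{1-w}\bigr)^{-1}+\bigO\bigl(\log(-\log(1-w))/\log^2(1-w)\bigr)$, and then reads off the coefficient asymptotics via the Flajolet--Odlyzko transfer theorems. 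You instead do a Laplace/Watson-type analysis of Volkmer's integral directly, using the small-argument behaviour of the complex branch $W_{-1}$ on the positive axis; this avoids the transfer machinery, treats even and odd $k$ simultaneously, and correctly identifies the relative error $\bigO(\log|\log u|/|\log u|)$ at $u\asymp 1/k$ as the source of the $\bigO(\log\log k/(k\log^3 k))$ term, at the price of the hands-on uniform error bookkeeping that the paper outsources to singularity analysis. Note also that your positive leading term $+1/(k\log^2 k)$ is the one forced by Volkmer's positivity and is exactly what the paper's own proof produces for $v_{2k-1}$, so your ``sign convention'' caveat resolves in your favour.

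Three small repairs to the sketch: the error term in $\Re W_{-1}(u)=\log u-\log|\log u|+\cdots$ should be $\bigO\bigl(\log|\log u|/|\log u|\bigr)$ rather than the triple-logarithm expression you wrote (your subsequent combined estimate is nevertheless correct); for the outer region you should record that $\Im W_{-1}/|1+W_{-1}|^2$ is bounded on all of $(0,\infty)$ (since $\Im W_{-1}\in(-2\pi,-\pi)$ gives $|1+W_{-1}|\geq\pi$), and the bound $(1+2t/k)^{-k/2-1}\leq e^{-t/2}$ only holds for $t\lesssim k$, so bound the far tail $z\geq 1$ by $\tfrac{2}{k}2^{-k/2}$ directly; and the regime $t\to 0^+$ that you flag as delicate is in fact harmless, because there $|\log(2t/(ek))|$ only increases, making the integrand smaller, so cutting at, say, $t=k^{-1/2}$ costs only a relative $\bigO(k^{-1/2})$.
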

\begin{proof}
    In addition to the expansion in eq.\ \eqref{eqn:ck_def_lambert}, the numbers $v_k$ are the coefficients of the following expansion of the other real branch of the Lambert $W$-function \cite{volkmer2008factorial},  
    \begin{align*}
        z W_{-1}'(z) &= -\sum_{k = -1}^{\infty}v_k (1+ez)^{\frac{k}{2}} \text{ for } z \in \left(-\frac{1}{e},0\right).
    \end{align*}
The discrepancy between the two expansions is given by the two different choices for the branch of the square root. We first consider the odd coefficients $v_{2k-1}$. Setting $w = 1+ez$ we define
    \begin{align*}
        g(w)= \frac12 \sqrt{w} \left( z W_0'\left(z\right) - z W_{-1}'\left(z\right) \right) = \sum_{k = 0}^{\infty}v_{2k-1} w^{k}.
    \end{align*}
    The function $g(w)$ can be analytically continued to $w=0$. Moreover, $g(w)$ has no other singularities in a $\Delta$-domain as defined in Lemma~\ref{lmm:singularity_analysis}: the dominant singularity of $g(w)$ comes from the logarithmic singularity of $W_{-1}$ at $z=0$ (see Figure~\ref{fig:lambertW}), so is located at $w=1$ after the variable change. The principal branch $W_0$ is analytic at $z=0$. Neither $W_0$ nor $W_{-1}$ has any other singularities in the relevant domain.

    Because the differential equation $W'(z) = \frac{W(z)}{z(1+W(z))}$ is satisfied by every branch of the Lambert $W$-function, we have
    \begin{align*}
        g(w) &= \frac12 \sqrt{w} \left( \frac{W_{0}(z)}{1+W_{0}(z)} - \frac{W_{-1}(z)}{1+W_{-1}(z)} \right) = 
        - \frac12 \sqrt{w} \frac{W_{-1}\left(\frac{w-1}{e}\right)}{1+W_{-1}\left(\frac{w-1}{e}\right)} + \text{`analytic'} \text{ as } w\rightarrow 1^- \\
        &=
        \frac12 \frac{\sqrt{w}}{1+W_{-1}\left(\frac{w-1}{e}\right)} + \text{`analytic'} \text{ as } w\rightarrow 1^{-},
    \end{align*}
    where we are able to neglect contributions which are analytic at $w=1$ since, by Lemma~\ref{lmm:dominant_singularity}, they will eventually  contribute only exponentially suppressed terms asymptotically.
    The function $W_{-1}$ has the singular behaviour \cite[Sec.\ 4]{corless1996lambertw},
    \begin{align*}
        W_{-1}(z) = \log(-z) + \bigO\left(\log(-\log(-z))\right) \text{ as } z \rightarrow 0^{-}.
    \end{align*}
    Thus, we get the singular expansion for $g(w)$,
    \begin{align*}
        g(w) &= 
        \frac12 \frac{\sqrt{ 1- (1-w)}}{1+\log\left(\frac{1-w}{e} \right)+ \bigO\left(\log(-\log(\frac{1-w}{e}))\right)}+ \text{`analytic'} \text{ as } w\rightarrow 1^{-}
\\
            &= 
- \frac12 \left(\log\frac{1}{1-w}\right)^{-1} + \bigO\left( \frac{\log(-\log\left(1-w \right))}{\left(\log\left(1-w \right)\right)^2}\right) + \text{'analytic'} \text{ as } w\rightarrow 1^{-}.
    \end{align*}
    With this knowledge we may use a more general statement from singularity analysis to extract the asymptotics of the coefficients of $g(w)$, for instance \cite[Cor.\ 6]{flajolet1990singularity}. More details are given in \cite[Sec.\ VI.2]{flajolet2009analytic}, where one can find the `asymptotic transfer law' $[w^k] \left(\log\frac{1}{1-w} \right)^{-1}= -\frac{1}{k(\log k)^2} + \bigO\left(\frac{1}{k(\log k)^3}\right)$ for $k\rightarrow \infty$ in Table VI.5. Also `transferring' the $\bigO$ term in the singular expansion of $g$ into its corresponding asymptotic term for the coefficients \cite[Cor.\ 6]{flajolet1990singularity} gives,
    \begin{align*}
        [w^k]g(w)&= v_{2k-1} = \frac12 \frac{1}{k (\log k)^2} + \bigO\left( \frac{\log\log k}{k (\log k)^3} \right) \text{ as } k \rightarrow \infty.
    \end{align*}
 We note that the asymptotic behaviour of the even coefficients $v_{2k}$ follows analogously by starting with 
        $g(w)= \frac12 \left(- z W_0'\left(z\right) - z W_{-1}'\left(z\right) \right) = \sum_{k = 0}^{\infty}v_{2k} w^{k}$,  although we will not need this for the present article.
        \end{proof}

The only remaining task for proving   Theorem~\ref{thm:SVconj} is to transfer our knowledge of the asymptotic behaviour of $v_k$ to the coefficients $\ch_n$. To deduce the asymptotic behaviour of these coefficients, we will use a classical theorem by Wright in the theory of graphical enumeration.
\begin{lemma}[Thm.\ 2 of \cite{wright1970asymptotic} with $R=1$] 
    \label{lmm:wright_connected_asymptotics}
Let $f(x)= \sum_{n \geq 0} c_n x^n$ be a power series in $\R[[x]]$, and let  $\exp(f(x)) = \sum_{n \geq 0} \hat{c}_n x^n$. Suppose 
    $c_0 = 0$, $\hat{c}_0 = 1,$  and  $\hat{c}_{n-1} \in \smallO(\hat{c}_{n})$  as $n\to\infty$ as well as 
    \begin{align}
        \label{eqn:center_sum}
        \sum_{k = 1}^{n-1} \hat{c}_k \hat{c}_{n-k} \in \bigO(\hat{c}_{n-1}) \text{ as } n\to \infty.
    \end{align}
Then 
    $c_n = \hat{c}_n + \bigO(\hat{c}_{n-1})$ as $n \to \infty$.
\end{lemma}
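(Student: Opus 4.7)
My starting point will be the standard recurrence obtained by differentiating $\exp(f)$. Since $c_0=0$, comparing coefficients of $x^{n-1}$ in $\frac{d}{dx}\exp(f(x)) = f'(x)\exp(f(x))$ yields
\[
n\hat c_n \;=\; \sum_{k=1}^{n} k\, c_k\, \hat c_{n-k},
\]
which together with $\hat c_0 = 1$ rearranges to
\[
c_n - \hat c_n \;=\; -\frac{1}{n}\sum_{k=1}^{n-1} k\, c_k\, \hat c_{n-k}.
\]
This is exactly the recurrence of equation~\eqref{eqn:exp_pwrsrs_relation} applied to an abstract $f$. The lemma will therefore reduce to showing that the convolution on the right is $O(\hat c_{n-1})$.

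The plan is to first establish the intermediate bound $c_n = O(\hat c_n)$ by a bootstrap, and then substitute this back to finish. Assume without loss of generality that $\hat c_n$ is eventually of one sign, and set $\eta_n = |c_n|/|\hat c_n|$ for such $n$. Since $k/n \le 1$, the recurrence gives
\[
\eta_n \;\le\; 1 \,+\, \Bigl(\max_{k<n}\eta_k\Bigr)\cdot\frac{\sum_{k=1}^{n-1}|\hat c_k\,\hat c_{n-k}|}{|\hat c_n|}.
\]
The hypotheses $\sum_{k=1}^{n-1}\hat c_k\hat c_{n-k}=O(\hat c_{n-1})$ and $\hat c_{n-1}=o(\hat c_n)$ together force the trailing ratio to tend to $0$. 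Hence there will be an $n_0$ beyond which this ratio is at most $1/2$, and a short induction on $n\ge n_0$ then shows $\eta_n \le \max\bigl(2,\,\max_{k\le n_0}\eta_k\bigr)$: if $\max_{k<n}\eta_k \le 2$ then $\eta_n \le 1+1=2$, while otherwise $\eta_n \le 1+\tfrac12\max_{k<n}\eta_k < \max_{k<n}\eta_k$, so the running maximum never grows. This produces $|c_n|\le C|\hat c_n|$ for some constant $C$ and all sufficiently large $n$.

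With the bound $c_n=O(\hat c_n)$ in hand, the conclusion is immediate: substituting it into the recurrence gives
\[
|c_n-\hat c_n| \;\le\; \sum_{k=1}^{n-1}|c_k|\,|\hat c_{n-k}| \;\le\; C\sum_{k=1}^{n-1}|\hat c_k\,\hat c_{n-k}| \;=\; O(\hat c_{n-1})
\]
by the convolution hypothesis, which is exactly the claim. The delicate point in the argument is the bootstrap step, where a single constant must bound $\eta_k$ for all $k<n$ and still bound $\eta_n$; this is made possible precisely by the strong growth hypothesis $\hat c_{n-1}=o(\hat c_n)$, which shrinks the amplification factor of the recurrence to zero and thereby leaves enough room to absorb the bounded boundary contributions coming from the small indices where the induction hypothesis has not yet engaged.
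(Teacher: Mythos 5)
The paper gives no proof of this lemma at all---it is quoted from Wright---so a self-contained argument like yours (differentiate $\exp(f)$ to get $n\hat c_n=\sum_{k=1}^{n}k\,c_k\hat c_{n-k}$, bootstrap to $c_n=\bigO(\hat c_n)$, then resubstitute) is a genuinely different route, and the overall architecture is sound. However, as a proof of the lemma \emph{as stated} it has a genuine gap: both in the bootstrap and in the final estimate you replace the hypothesis \eqref{eqn:center_sum}, which with the paper's definition of $\bigO$ only bounds the modulus of the \emph{signed} sum $\left|\sum_{k=1}^{n-1}\hat c_k\hat c_{n-k}\right|$, by a bound on the sum of absolute values $\sum_{k=1}^{n-1}|\hat c_k\hat c_{n-k}|$. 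For real sequences of mixed sign these are not comparable: the signed sum can be small because of cancellation between different pairs $\{k,\,n-k\}$ while $\sum_{k}|\hat c_k\hat c_{n-k}|$ is much larger than $|\hat c_{n-1}|$ (or even than $|\hat c_n|$), and the weighted convolution $\sum_k k\,c_k\hat c_{n-k}$ appearing in your recurrence has no reason to inherit such cancellation, since the weights $k/n$ are asymmetric and $c_k\neq\hat c_k$. So the sentence ``the hypotheses \dots force the trailing ratio to tend to $0$'' is unjustified; what your argument actually proves is the lemma under the stronger hypothesis $\sum_{k=1}^{n-1}|\hat c_k\hat c_{n-k}|\in\bigO(\hat c_{n-1})$. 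That stronger form is what holds in the paper's application (there $\Ch_0=1$ and $\Ch_k<0$ for $k\geq1$, so all products $\Ch_k\Ch_{n-k}$ with $1\leq k\leq n-1$ are positive, and the verification in the proof of Proposition~\ref{prop:chi_outfn_asymptotics} bounds absolute values anyway), and it is the setting of Wright's original theorem on non-negative enumerative sequences; but it is not what the printed hypothesis grants you.

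A second, smaller issue: $\eta_k=|c_k|/|\hat c_k|$ need not be defined for small $k$, since $\hat c_k$ can vanish while $c_k\neq0$ (e.g.\ $f(x)=x-\tfrac{x^2}{2}$ gives $\hat c_2=0$, $c_2\neq0$), so the displayed inequality $\eta_n\leq 1+\bigl(\max_{k<n}\eta_k\bigr)\rho_n$ is not literally available; your closing remark about ``boundary contributions'' acknowledges this but the induction as written does not treat it. It is easily repaired: split off the finitely many indices $k\leq K_0$, bound their contribution by a constant times $\sum_{k\leq K_0}|\hat c_{n-k}|$, which is $\bigO(\hat c_{n-1})=\smallO(\hat c_n)$ by repeated use of $\hat c_{m-1}\in\smallO(\hat c_m)$, and run the induction on the remaining range; the same splitting handles these indices in the final estimate, where the $k=1$ term alone is genuinely of order $\hat c_{n-1}$. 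With the absolute-value form of \eqref{eqn:center_sum} granted and this bookkeeping added, your bootstrap and conclusion are correct.
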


\begin{proof}[Proof of Proposition~\ref{prop:chi_outfn_asymptotics}]
Let $T(z) = \sum_{n\geq 1} \ch_n z^n$, and $\exp(T(z)) = \sum_{n\geq 0} \Ch_n z^n$.

    We have to verify that $\Ch_n$ satisfies the conditions of Lemma~\ref{lmm:wright_connected_asymptotics}. The only condition that is not immediate is eq.\ \eqref{eqn:center_sum}. By Proposition~\ref{prop:lambert_rep} and Lemma~\ref{lmm:asymp_vk} we have
    \begin{align*}
        \Ch_n &= - \frac{1}{\sqrt{2\pi}} \frac{\Gamma(n +\frac12 )}{n \log^2 n}  + \bigO\left( \frac{\log \log n }{n \log^3 n}\Gamma(n +\frac12 ) \right)\\
            &= - \frac{1}{\sqrt{2\pi}} \frac{\Gamma(n -\frac12 )}{\log^2 (n+1)}  + \bigO\left( \frac{\log \log n }{\log^3 n}\Gamma(n -\frac12 ) \right).
    \end{align*}
  From this it follows that we can find a constant $C \in \R$ such that $|\Ch_n| \leq C \frac{\Gamma(n -\frac12 )}{\log^2 (n+1)}$ for all $n \geq 1$.
    Recall that $\Gamma(x)$ is \textit{log-convex}  on the interval $x\in(0,\infty)$, i.e.\ $\log(\Gamma(x))$ is a convex function on this interval \cite{artin2015gamma}. The function $-\log(\log(1+x))$ is convex on this interval as well, since its second derivative $\frac{1+\log(1+x)}{(1+x)^2 \log^2(1+x)}$ is positive. If $f(x)$ is convex on the interval $[a,b]$, then $f(b+a-x)$ is also convex on $[a,b]$. If another function $g(x)$ is convex on this interval, then $f(x) + g(x)$ is too. Therefore, 
    \begin{align*}
        \log( \Gamma(n-x-\frac12) ) +  \log( \Gamma(x-\frac12) ) - 2 \log \log(1+n-x) - 2 \log \log(1+x)
    \end{align*}
    is convex for $x\in(\frac12 , n-\frac12)$. Because $e^x$ is an increasing function,
        $\frac{\Gamma(n-x-\frac12)\Gamma(x-\frac12)}{ \log^2(1+n-x) \log^2(1+x) }$
        is also convex on $x\in(\frac12, n-\frac12)$. This also implies convexity on the smaller interval $[2,n-2]$. The usual inequality for convex functions now gives
        \begin{align*}
            \frac{\Gamma(n-x-\frac12)\Gamma(x-\frac12)}{ \log^2(1+n-x) \log^2(1+x) }
            \leq \frac{\Gamma(n-2-\frac12)\Gamma(2-\frac12)}{ \log^2(1+n-2) \log^2(1+2) } \text{ for all } x \in [2,n-2],
        \end{align*}
        and we can estimate
        \begin{gather*}
            \left|\sum_{k=1}^{n-1} \Ch_{n-k} \Ch_k \right| \leq 2 | \Ch_{n-1} \Ch_1 | +\sum_{k=2}^{n-2} | \Ch_{n-k} \Ch_k| 
            \leq 2 | \Ch_{n-1} \Ch_1 | + C^2 \sum_{k=2}^{n-2} \frac{\Gamma(n-k -\frac12 )\Gamma(k -\frac12 )}{\log^2 (1+n-k)\log^2 (1+k)} \\
            \leq 2 | \Ch_{n-1} \Ch_1 | + C^2 (n-3) \frac{\Gamma(n-2 -\frac12 )\Gamma(2 -\frac12 )}{\log^2 (1+n-2)\log^2 (1+2)}.
        \end{gather*}
        It follows that $\sum_{k=1}^{n-1} \Ch_{n-k} \Ch_k \in \bigO(\Ch_{n-1})$, so Lemma~\ref{lmm:wright_connected_asymptotics} can be applied to give 
\begin{align*}
    \ch_n &= \Ch_n + \bigO(\Ch_{n-1}) = - \frac{1}{\sqrt{2\pi}} \frac{\Gamma(n -\frac12 )}{\log^2 n}  + \bigO\left( \frac{\log \log n }{\log^3 n}\Gamma\left(n -\frac12 \right) \right),
\end{align*}
because $\Ch_{n-1} \in \bigO\left( \frac{\log \log n }{\log^3 n}\Gamma\left(n -\frac12 \right) \right)$.
\end{proof}

The asymptotic behavior of $\chi(\Out(F_n))$ now follows by combining our results.
\begin{repthmx}{thm:SVconj}
    The rational Euler characteristic of $\Out(F_n)$ is strictly negative, $\chi\left( \Out(F_n) \right) < 0$, for all $n \geq 2$ and its magnitude grows more than exponentially,
    \begin{align*}
        \chi\left(\Out(F_{n}) \right) &\sim - \frac{1}{\sqrt{2 \pi}} \frac{\Gamma(n-\frac32)}{\log^2(n)}.
    \end{align*}
\end{repthmx}

\begin{proof}
    Apply Corollary~\ref{crll:negative_T} and Proposition~\ref{prop:chi_outfn_asymptotics}.
\end{proof}

\appendix

\section{Proof of Laplace's method}
\begin{replemma}{lmm:laplace_method}[Laplace's method]
Let $f$ and $g$ be real-valued functions  on a domain  $D \subset \R$  with $0$ in its interior.  Suppose both $f$ and $g$ are analytic in a neighborhood of $0$, that $g(0)=g'(0)=0$, $g''(0)=-1,$ and $0$ is the unique global supremum of $g$. Finally, assume that the integral
    \begin{align*}
 \int_D | f(x) | e^{n g(x)} dx
    \end{align*}
    exists for sufficiently large $n$.
  Then the sequence $I(n)$ given by the integral formula
\begin{align}
    \label{epn:integral_IA}
    I(n) = \sqrt{\frac{n}{2 \pi}} \int_D f(x) e^{n g(x)} dx
\end{align}
admits an asymptotic expansion with asymptotic scale $\{n^{-k}\}_{k\geq0}$, 
\begin{align}
    \label{epn:laplace_expansionA}
    I(n) \sim \sum_{k \geq 0} c_k n^{-k} \text{ as } n \rightarrow \infty,
\end{align}
where $c_k$ is the coefficient of $z^k$ in the formal power series,
\begin{align}
\label{epn:laplace_coeffsA}
 \sum_{\ell \geq 0} z^\ell (2\ell-1)!! [x^{2\ell}] f(x) e^{\frac{1}{z} \left( g(x) + \frac{x^2}{2} \right) }.
\end{align}
\end{replemma}

\newcommand{\floor}[1]{\lfloor #1 \rfloor}
\begin{proof}
The proof closely follows the arguments in \cite[Sec.\ 4.4]{de1981asymptotic} and \cite[Thm. B7]{flajolet2009analytic}. 

The dominating contribution to the value of the integral $I(n)$ for large $n$ comes from the values of $f$ and $g$ near the global supremum of $g$. Laplace's method works by exploiting this observation quantitatively. 
    
    The start of the method is to show that $I(n)$ can be approximated by integrating the same integrand over a small neighborhood of the supremum. The second step is to split the integrand into a product of a Gaussian kernel and an analytic function which can be expanded as a convergent power series. In the last step, this power series is then integrated termwise using the Gaussian integrals
\begin{gather}
        \label{eqn:gaussian_integral}
        \sqrt{\frac{n}{2 \pi}} \int_{-\infty}^{-\infty} e^{-n \frac{x^{2}}{2}} x^{2k} dx = n^{-k}(2k-1)!! \text{ and }
        \sqrt{\frac{n}{2 \pi}} \int_{-\infty}^{-\infty} e^{-n \frac{x^{2}}{2}} x^{2k+1} dx = 0,
\end{gather}
where both identities hold for all $k\geq 0$.
The resulting series is an asymptotic expansion of the integral $I(n)$.

We will start by showing that the most important contribution to the value of $I(n)$ comes from a small neighbourhood of the supremum of $g$.
Because $x=0$ is a local maximum of $g(x)$ in $D$, the function $g(x)$ is monotonically increasing on the interval $[-\delta,0]$ and monotonically decreasing on $[0,\delta]$ for a sufficiently small $\delta > 0$. Because $x=0$ is the unique global supremum of $g(x)$ in $D$, we can also choose $\delta$ small enough such that if $\epsilon \in [0,\delta]$, then $g(x) \leq \max(g(\epsilon),g(-\epsilon))$ for all $x \in D \setminus (-\epsilon,\epsilon)$. This observation translates into the following estimate
\begin{gather}
    \begin{gathered}
    \label{eqn:laplace_epsilon_bound_I}
\left| I(n) - \sqrt{\frac{n}{2 \pi}}\int_{-\epsilon}^\epsilon f(x) e^{n g(x)} dx \right|
\leq
\sqrt{\frac{n}{2 \pi}} 
\int_{D \setminus (-\epsilon,\epsilon)}  |f(x)| e^{(n-n_0+n_0) g(x)}dx 
\\
\leq
\sqrt{\frac{n}{2 \pi}} e^{(n-n_0)\max(g(-\epsilon),g(\epsilon))} 
\int_{D}  |f(x)| e^{n_0 g(x)}dx,
\end{gathered}
\end{gather}
where $n_0$ is chosen appropriately large such that, in accordance with the requirement, the integral in the last bound exists. We may assume that $\delta$ is small enough such that $f$ and $g$ are analytic in the interval $[-\delta,\delta]$. Because of the requirements $g(0)=g'(0)=0$ and $g''(0)=-1$, the function $x^{-3} \left( g(x) + \frac{x^2}{2} \right)$ is analytic in this interval as well. Specifically, it is bounded in $[-\delta, \delta]$. Therefore, there exists a constant $C_1 \in \R$ such that $|\epsilon|^{-3} \left| g(\epsilon) + \frac{\epsilon^2}{2} \right| \leq C_1$ for $\epsilon \in [-\delta,\delta]$ or equivalently
$\left| g(\pm \epsilon) + \frac{\epsilon^2}{2} \right| \leq C_1 \epsilon^3$ for $\epsilon \in [0,\delta]$. Therefore,
\begin{gather*}
    (n -n_0)g(\pm \epsilon) = 
-\frac{\epsilon^2}{2}
+
n \left( g( \pm \epsilon) + \frac{\epsilon^2}{2} \right)
- n_0 g(\pm \epsilon)
\leq
-\frac{\epsilon^2}{2}
+ C_1 \epsilon^3 - n_0 g(\pm \delta) \text{ for } \epsilon \in [0, \delta],
\end{gather*}
where $- n_0 g(\pm \epsilon) \leq - n_0 g(\pm \delta)$ follows from the monotonicity of $g$ on the intervals $[-\delta,0]$ and $[0,\delta]$. If we set $\epsilon = n^{-{\frac{5}{12}}}$, then for large enough $n$
\begin{gather*}
    e^{(n-n_0)g(\pm n^{-{\frac{5}{12}}})} \leq \exp \left( 
-\frac{n^{{\frac{1}{6}}}}{2}
+ C_1 n^{-{\frac{1}{4}}} - n_0 g(\pm \delta)
    \right) \in \bigO(e^{-\frac{n^{{\frac{1}{6}}}}{2}}).
\end{gather*}
Applying the inequality in eq.\ \eqref{eqn:laplace_epsilon_bound_I}, where we also set $\epsilon = n^{-{\frac{5}{12}}}$, gives, 
\begin{align}
    \label{eqn:I_truncated}
    I(n) &= \sqrt{\frac{n}{2 \pi}}\int_{-n^{-{\frac{5}{12}}}}^{n^{-{\frac{5}{12}}}} f(x) e^{n g(x)} dx + \bigO(\sqrt{n}e^{-\frac{n^{{\frac{1}{6}}}}{2}}),
\end{align}
where the $\bigO$ refers, as in the rest of the proof, to the limit $n \rightarrow \infty$.
Observe that we managed to approximate $I(n)$ up to an exponentially small contribution without using the information of the domain $D$. Instead of $\epsilon = n^{-{\frac{5}{12}}}$, we also could have chosen $\epsilon = n^{-\gamma}$ with any $\frac13 < \gamma < \frac12$.

The second step of Laplace's method is to interpret the integrand as a Gaussian kernel times the function
\begin{align*}
A(x,n) := f(x) e^{n \left( g(x) + \frac{x^2}{2} \right) }, 
\end{align*}
which we may expand as a power series, because it is analytic in $x$ and entire in $n$,
\begin{align*}
    A(x,n) &= \sum_{k=0}^{\infty} \sum_{\ell=0}^{\floor{\frac{k}{3}}} a_{k,\ell} n^\ell x^k.
\end{align*}
This expansion is convergent for all $x \in [-\delta,\delta]$ if we choose $\delta$ sufficiently small.
The sum over $\ell$ is bounded, because $g(x) + \frac{x^2}{2} = \alpha x^3 + \text{`higher order terms'}$. As $A(x,n)$ is analytic in $x$ and $n$, there exists a constant $C_2\in \R$ such that $|a_{k,\ell}| \leq C_2^{k+\ell}$ for all $k,\ell \geq 0$. We will truncate the power series expansion of $A$ at some finite order $K \geq 0$ in $x$. The remainder can be estimated uniformly for $n$ sufficiently large and $x\in [-n^{-{\frac{5}{12}}},n^{-{\frac{5}{12}}}]$:
\begin{gather*}
    \left| f(x) e^{n \left( g(x) + \frac{x^2}{2} \right)} - \sum_{k=0}^{K-1} \sum_{\ell=0}^{\floor{\frac{k}{3}}} a_{k,\ell} n^\ell x^k \right| = 
    \left| \sum_{k=K}^{\infty} \sum_{\ell=0}^{\floor{\frac{k}{3}}} a_{k,\ell} n^\ell x^k \right|
    \leq
    |x|^{K} \sum_{k=0}^{\infty} \sum_{\ell=0}^{\floor{\frac{k+K}{3}}} C_2^{k+K+\ell} n^\ell |x|^k 
    \\
    \leq
    |x|^{K} \sum_{k=0}^{\infty} \frac{k+K}{3} C_2^{k+K+\frac{k+K}{3}} n^{\frac{k+K}{3}} n^{-\frac{5}{12}k} 
    =
    |x|^{K} C_2^{\frac{4}{3}K} n^{\frac{K}{3}}
    \sum_{k=0}^{\infty} \frac{k+K}{3} \left( \frac{n}{C_2^{16}}\right)^{-\frac{1}{12}k} \leq C_3 n^{\frac{K}{3}} |x|^K.
\end{gather*}
This estimate only makes sense when $n > C_2^{16}$, which we may require. The constant $C_3$ is chosen appropriately independent of $x$ and $n$.
With $K=6R$, we get the following estimate for integrals from this, 
\begin{gather*}
    \sqrt{\frac{n}{2 \pi}}\left| \int_{-n^{-{\frac{5}{12}}}}^{n^{-{\frac{5}{12}}}} \left( f(x) e^{n g(x)} 
-
 e^{-n \frac{x^{2}}{2}} \sum_{k=0}^{6R-1} \sum_{\ell=0}^{\floor{\frac{k}{3}}} a_{k,\ell} n^\ell x^k \right) dx
\right| \leq 
    C_3 n^{2 R} \sqrt{\frac{n}{2 \pi}}\int_{-n^{-{\frac{5}{12}}}}^{n^{-{\frac{5}{12}}}} e^{-n \frac{x^{2}}{2}} x^{6R} dx
    \\
    \leq
    C_3 n^{2 R} \sqrt{\frac{n}{2 \pi}} \int_{-\infty}^{-\infty} e^{-n \frac{x^{2}}{2}} x^{6R} dx
    \leq 
    C_3 (6R-1)!! n^{-R} \text{ for all } R \geq 0,
\end{gather*}
where the Gaussian integral from eq.\ \eqref{eqn:gaussian_integral} was used in the last step.
Together with eq.\ \eqref{eqn:I_truncated} and $\bigO(\sqrt{n} e^{-n^{1/6} /2}) \subset \bigO(n^{-R})$ this implies that 
\begin{align}
    \label{eqn:I_expanded}
    I(n) &=
\sum_{k=0}^{6R-1}
 \sum_{\ell=0}^{\floor{\frac{k}{3}}} a_{k,\ell} n^\ell \sqrt{\frac{n}{2 \pi}} 
\int_{-n^{-{\frac{5}{12}}}}^{n^{-{\frac{5}{12}}}} 
e^{-n \frac{x^{2}}{2}} x^k dx
+ \bigO(n^{-R}) \text{ for all } R \geq 0.
\end{align}
The remaining task is to get rid of the finite integration bounds to recover a full Gaussian integral. We need another bound for the remainder,
\begin{gather*}
    r_k(n,\epsilon) :=
\sqrt{\frac{n}{2 \pi}} \left| \int_{-\infty}^{\infty} e^{-n \frac{x^2}{2} } x^{2k} dx - 
\int_{-\epsilon}^{\epsilon} e^{-n \frac{x^2}{2} } x^{2k} dx \right| \\
= 
\sqrt{\frac{n}{2 \pi}}
\left(
\int_{-\infty}^{-\epsilon} e^{-n \frac{x^2}{2} } x^{2k} dx
+
\int_{\epsilon}^{\infty} e^{-n \frac{x^2}{2} } x^{2k} dx
\right)
\\
=
2\sqrt{\frac{n}{2 \pi}}
\int_{0}^{\infty} e^{-n \frac{\left(x+\epsilon\right)^2}{2} } \left(x+\epsilon\right)^{2k} dx
=
2e^{-n\frac{\epsilon^2}{2}}
\sqrt{\frac{n}{2 \pi}}
\int_{0}^{\infty} e^{-n \frac{x^2}{2} - \epsilon n x } \left(x+\epsilon\right)^{2k} dx.
\end{gather*}
The function $e^{- \epsilon n x } \left(x+\epsilon\right)^{2k}$ is bounded for $x\in [-\epsilon,\infty]$ and has its right most local maximum at $x = \frac{2k}{\epsilon n} - \epsilon$. As long as $2k \leq \epsilon^2 n$, this maximum lies on or on the left of the origin and the function decreases monotonically for $x \in [0,\infty)$. Therefore,
\begin{align*}
    r_k(n,\epsilon) \leq 
2e^{-n\frac{\epsilon^2}{2}}
\epsilon^{2k}
\sqrt{\frac{n}{2 \pi}}
\int_{0}^{\infty} e^{-n \frac{x^2}{2}} dx 
=
e^{-n\frac{\epsilon^2}{2}}
\epsilon^{2k} \text{ for all } \epsilon, n \text{ and } k \text{ such that } 2k \leq \epsilon^2 n. 
\end{align*}
Specifically, with $\epsilon=n^{-{\frac{5}{12}}}$, we get $r_k(n,n^{-{\frac{5}{12}}}) \in \bigO(e^{-\frac{1}{2}n^{{\frac{1}{6}}}}) \subset \bigO(n^{-R})$.
Applying this to eq.~\eqref{eqn:I_expanded} finally gives,
\begin{align*}
    I(n) &=
\sum_{k=0}^{6R-1}
 \sum_{\ell=0}^{\floor{\frac{k}{3}}} a_{k,\ell} n^\ell 
\sqrt{\frac{n}{2 \pi}} 
\int_{-\infty}^{\infty} 
e^{-n \frac{x^{2}}{2}} x^k dx
+ \bigO(n^{-R}) \text{ for all } R \geq 0,
\end{align*}
which can be rewritten as eqs.\ \eqref{epn:laplace_expansionA} and \eqref{epn:laplace_coeffsA}, by substituting the definition of $A(x,n)$, by using the Gaussian integral from eq.\ \eqref{eqn:gaussian_integral}, by realizing that only every second term in the sum over $k$ contributes, by using the notation from Definition \ref{def:asymptotic_expansion} and the coefficient extraction operator.
\end{proof}


\begin{thebibliography}{10}

\bibitem{artin2015gamma}
E.~Artin.
\newblock {\em The gamma function}.
\newblock Holt, Rinehart and Winston, 1964.

\bibitem{Bartholdi}
L.~Bartholdi.
\newblock The rational homology of the outer automorphism group of {$F_7$}.
\newblock {\em New York J. Math.}, 22:191--197, 2016.

\bibitem{bessis1980quantum}
D.~Bessis, C.~Itzykson, and J.-B. Zuber.
\newblock Quantum field theory techniques in graphical enumeration.
\newblock {\em Adv. in Applied Math.}, 1(2):109--157, 1980.

\bibitem{BBM}
M.~Bestvina, K.-U. Bux, and D.~Margalit.
\newblock Dimension of the {Torelli} group for {$\Out(F_n)$}.
\newblock {\em Inventiones Mathematicae}, 170(1):1--32, 2007.

\bibitem{BF00}
M.~Bestvina and M.~Feighn.
\newblock {The topology at infinity of $\Outn$}.
\newblock {\em Inventiones Mathematicae}, 140(3):651--692, 2000.

\bibitem{borinsky2017renormalized}
M.~Borinsky.
\newblock Renormalized asymptotic enumeration of {Feynman} diagrams.
\newblock {\em Annals of Phys.}, 385:95--135, 2017.

\bibitem{borinsky2018generating}
M.~Borinsky.
\newblock Generating asymptotics for factorially divergent sequences.
\newblock {\em The Electronic J. of Combinatorics}, 25(4):4--1, 2018.

\bibitem{borinsky2018graphs}
M.~Borinsky.
\newblock {\em Graphs in perturbation theory: algebraic structure and
  asymptotics}.
\newblock Springer, 2018.

\bibitem{borwein2018gamma}
J.~M. Borwein and R.~M. Corless.
\newblock Gamma and factorial in the {Monthly}.
\newblock {\em The American Mathematical Monthly}, 125(5):400--424, 2018.

\bibitem{Bro82}
K.~S. Brown.
\newblock {\em Cohomology of groups}.
\newblock Springer, Berlin, 1982.

\bibitem{Brown82}
K.~S. Brown.
\newblock Complete {E}uler characteristics and fixed-point theory.
\newblock {\em J. Pure Appl. Algebra}, 24(2):103--121, 1982.

\bibitem{BSV18}
K.-U. Bux, P.~Smillie, and K.~Vogtmann.
\newblock {On the bordification of Outer space}.
\newblock {\em J. London Math. Soc.}, 98:12--34, 2018.

\bibitem{CHKV16}
J.~Conant, A.~Hatcher, M.~Kassabov, and K.~Vogtmann.
\newblock {Assembling homology classes in automorphism groups of free groups}.
\newblock {\em Commentarii Mathematici Helvetici}, 91(4):751--806, 2016.

\bibitem{conant2003theorem}
J.~Conant and K.~Vogtmann.
\newblock On a theorem of {Kontsevich}.
\newblock {\em Algebr. Geom. Topol.}, 3(2):1167--1224, 2003.

\bibitem{connes2000renormalization}
A.~Connes and D.~Kreimer.
\newblock Renormalization in quantum field theory and the {Riemann--Hilbert
  problem I}: The {Hopf} algebra structure of graphs and the main theorem.
\newblock {\em Comm. Math. Phys.}, 210(1):249--273, 2000.

\bibitem{corless1996lambertw}
R.~M. Corless, G.~H. Gonnet, D.~E. Hare, D.~J. Jeffrey, and D.~E. Knuth.
\newblock On the {Lambert-$W$} function.
\newblock {\em Adv. in Comp. Math.}, 5(1):329--359, 1996.

\bibitem{CV86}
M.~Culler and K.~Vogtmann.
\newblock {Moduli of graphs and automorphisms of free groups}.
\newblock {\em Inventiones Mathematicae}, 84(1):91--119, 1986.

\bibitem{de1981asymptotic}
N.~G. de~Bruijn.
\newblock {\em Asymptotic methods in analysis}.
\newblock North-Holland Publishing, 1958.

\bibitem{flajolet1990singularity}
P.~Flajolet and A.~Odlyzko.
\newblock Singularity analysis of generating functions.
\newblock {\em SIAM J. on Discrete Math.}, 3(2):216--240, 1990.

\bibitem{flajolet2009analytic}
P.~Flajolet and R.~Sedgewick.
\newblock {\em Analytic combinatorics}.
\newblock Cambridge University Press, 2009.

\bibitem{Galatius}
S.~Galatius.
\newblock Stable homology of automorphism groups of free groups.
\newblock {\em Annals of Math.}, 173(2):705--768, 2011.

\bibitem{gerlits2002}
F.~Gerlits.
\newblock {\em Invariants in chain complexes of graphs}.
\newblock ProQuest LLC, Ann Arbor, MI, 2002.
\newblock Thesis (Ph.D.)--Cornell University.

\bibitem{HZ86}
J.~L. Harer and D.~Zagier.
\newblock {The Euler characteristic of the moduli space of curves}.
\newblock {\em Inventiones Mathematicae}, 85(3):457--485, 1986.

\bibitem{Hat95}
A.~Hatcher.
\newblock {Homological stability for automorphism groups of free groups}.
\newblock {\em Commentarii Mathematici Helvetici}, 70(1):39--62, 1995.

\bibitem{HV96}
A.~Hatcher and K.~Vogtmann.
\newblock Isoperimetric inequalities for automorphism groups of free groups.
\newblock {\em Pacific J. Math.}, 173(2):425--441, 1996.

\bibitem{HV98}
A.~Hatcher and K.~Vogtmann.
\newblock Rational homology of {${\rm Aut}(F_n)$}.
\newblock {\em Math. Res. Lett.}, 5(6):759--780, 1998.

\bibitem{Kon92}
M.~Kontsevich.
\newblock Intersection theory on the moduli space of curves and the matrix
  {Airy} function.
\newblock {\em Comm. Math. Phys.}, 147(1):1--23, 1992.

\bibitem{kontsevich1993formal}
M.~Kontsevich.
\newblock Formal (non)-commutative symplectic geometry.
\newblock In {\em The Gelfand Mathematical Seminars, 1990--1992}, pages
  173--187. Springer, 1993.

\bibitem{kontsevich1994feynman}
M.~Kontsevich.
\newblock Feynman diagrams and low-dimensional topology.
\newblock In {\em First European Congress of Mathematics Paris, July 6--10,
  1992}, pages 97--121. Springer, 1994.

\bibitem{kreimer2009core}
D.~Kreimer.
\newblock The core {Hopf} algebra.
\newblock {\em Clay Math. Proc.}, 11:313--322, 2009.

\bibitem{KrVo93}
S.~Krsti\'{c} and K.~Vogtmann.
\newblock Equivariant outer space and automorphisms of free-by-finite groups.
\newblock {\em Comment. Math. Helv.}, 68(2):216--262, 1993.

\bibitem{Magnus}
W.~Magnus.
\newblock {{\"U}ber $n$-dimensionale Gittertransformationen}.
\newblock {\em Acta Mathematica}, 64(1):353--367, Dec. 1935.

\bibitem{penner1986moduli}
R.~C. Penner.
\newblock The moduli space of a punctured surface and perturbative series.
\newblock {\em Bulletin of the American Math. Soc.}, 15(1):73--77, 1986.

\bibitem{Serre71}
J.-P. Serre.
\newblock {Cohomologie des groupes discrets}.
\newblock {\em Annals of Math. Studies}, 70:77--169, 1971.

\bibitem{Serre79}
J.-P. Serre.
\newblock {\em Arithmetic Groups}, pages 105--136.
\newblock Number~36 in London Math. Soc. Lecture Notes. Cambridge University
  Press, 1979.

\bibitem{SV87a}
J.~Smillie and K.~Vogtmann.
\newblock {A generating function for the Euler characteristic of $\Outn$}.
\newblock {\em J. of Pure and Applied Algebra}, 44(1-3):329--348, 1987.

\bibitem{SV87b}
J.~Smillie and K.~Vogtmann.
\newblock {Automorphisms of graphs, p-subgroups of $\Outn$ and the Euler
  characteristics of $\Outn$}.
\newblock {\em J. of Pure and Applied Algebra}, 1987.

\bibitem{stanley1997enumerative2}
R.~P. Stanley.
\newblock {\em Enumerative Combinatorics}.
\newblock Number Volume 2 in Cambridge Studies in Adv. Math. Cambridge
  University Press, 1997.

\bibitem{sweedler1969hopf}
M.~E. Sweedler.
\newblock {\em Hopf algebras}.
\newblock Mathematics Lecture Note Series. W. A. Benjamin, 1969.

\bibitem{volkmer2008factorial}
H.~Volkmer.
\newblock Factorial series connected with the {Lambert} function, and a problem
  posed by {Ramanujan}.
\newblock {\em The Ramanujan Journal}, 16(3):235--245, 2008.

\bibitem{Wall61}
C.~T.~C. Wall.
\newblock Rational {E}uler characteristics.
\newblock {\em Proc. Cambridge Philos. Soc.}, 57:182--184, 1961.

\bibitem{witten1988topological}
E.~Witten.
\newblock Topological quantum field theory.
\newblock {\em Comm. Math. Phys.}, 117(3):353--386, 1988.

\bibitem{witten1990two}
E.~Witten.
\newblock Two-dimensional gravity and intersection theory on moduli space.
\newblock {\em Surveys in differential geometry}, 1(1):243--310, 1990.

\bibitem{wright1970asymptotic}
E.~M. Wright.
\newblock Asymptotic relations between enumerative functions in graph theory.
\newblock {\em Proc. of the London Math. Soc.}, 3(3):558--572, 1970.

\bibitem{Zagier}
D.~Zagier, 1989.
\newblock personal communication.

\end{thebibliography}
\end{document}